\documentclass[10pt,a4paper]{article}
\usepackage[english]{babel}
\usepackage{amsmath}
\usepackage{amsfonts}
\usepackage[latin1]{inputenc}
\usepackage[T1]{fontenc}
\usepackage{amssymb}
\usepackage[ruled,algosection,linesnumbered]{algorithm2e}
\usepackage{color}
\usepackage{cite}
\usepackage{amsthm}
\usepackage{latexsym}
\usepackage{graphicx}
\usepackage[lofdepth,lotdepth]{subfig}
\usepackage{bbm}
\usepackage{url}
\usepackage{longtable}
\newtheorem{theorem}{Theorem}[section]

\newtheorem{proposition}[theorem]{Proposition}
\newtheorem{lemma}[theorem]{Lemma}

\newtheorem{crllr}[theorem]{Corollary}

\newcommand{\diam}{\operatorname{diam}} 
 
\newcommand{\dist}{\operatorname{diag}}

\newcommand{\spanlin}{\operatorname{span}} 
\newcommand{\erf}{\operatorname{erf}} 
\newcommand{\la}{\langle} 
\newcommand{\ra}{\rangle} 
\usepackage[left=2cm,right=2cm,top=2cm,bottom=2cm]{geometry}

\title{Hierarchical model reduction of nonlinear partial differential equations based on the adaptive empirical projection method and reduced basis techniques}
\author{Kathrin Smetana\footnote{Institute for Applied Mathematics, University of M\"{u}nster, Einsteinstr. 62, D-48149 M\"{u}nster, Germany, e-mail: \texttt{kathrin.smetana@wwu.de}},
Mario Ohlberger\footnote{Institute for Applied Mathematics, University of M\"{u}nster, Einsteinstr. 62, D-48149 M\"{u}nster, Germany, e-mail: \texttt{ohlberger@uni-muenster.de}}} 
\begin{document}
\maketitle

\begin{abstract}
In this paper we extend the hierarchical model reduction framework based on reduced basis techniques recently introduced in \cite{OS10} for the application to nonlinear partial differential equations. 
The major new ingredient to accomplish this goal is the introduction of the adaptive empirical projection method, which is an adaptive integration algorithm based on the (generalized) empirical interpolation method \cite{BMNP04,MaMu13}. Different from other partitioning concepts for the empirical interpolation method we perform an adaptive decomposition of the spatial domain. We project both the variational formulation and the range of the nonlinear operator onto reduced spaces. Those reduced spaces combine the full dimensional (finite element) space in an identified dominant spatial direction and a reduction space or collateral basis space spanned by modal  orthonormal basis functions in the transverse direction. Both the reduction and the collateral basis space are constructed in a highly nonlinear fashion by introducing a parametrized problem in the transverse direction and associated parametrized operator evaluations, and by applying reduced basis methods to select the bases from the corresponding snapshots. 
Rigorous a priori and a posteriori error estimators, which do not require additional regularity of the nonlinear operator are proven for the adaptive empirical projection method and then used to derive a rigorous a posteriori error estimator for the resulting hierarchical model reduction approach.
Numerical experiments for an elliptic nonlinear diffusion equation demonstrate a fast convergence of the proposed dimensionally reduced approximation to the solution of the full-dimensional problem. Runtime experiments verify a close to linear scaling of the reduction method in the number of degrees of freedom used for the computations in the dominant direction. 
\end{abstract}
\vspace*{\baselineskip}
\noindent
{\bf Keywords:} Dimensional reduction, hierarchical model reduction, reduced basis methods, a posteriori error estimation, nonlinear partial differential equations, empirical interpolation, finite elements\\

\noindent
{\bf AMS Subject Classification:}  65N15,65N30,65Y20,35J60,65D05,65D30\\

\section{Introduction}
Many phenomena in nature and in particular fluid dynamics exhibit a dominant spatial direction along which the essential dynamics occur. Examples are blood flow problems or the flow in river beds which can be both modeled by the incompressible Navier-Stokes equations (cf.~\cite{Tem01,ForQuaVen09}) or groundwater flow in unsaturated soils which may be described by the Richards equation (cf.~\cite{Bear88,BOSS13}). This feature can be exploited to derive a dimensionally reduced model for the dominant direction, which should however include information on the transverse dynamics to improve the accuracy of the approximation. This paper is devoted to the derivation of an efficient dimensional reduction approach for steady nonlinear partial differential equations (PDEs) of the general type
\begin{equation} \label{nonlinear_pde}
\text{Find} \enspace p \in H^{1}_{0}(\Omega): \quad	\langle A(p), v\rangle = \langle f, v\rangle  \quad \forall v \in H^1_0(\Omega).
\end{equation}
Here, $\Omega \subset \mathbb{R}^d$ is a bounded domain with Lipschitz boundary, $f \in H^{-1}(\Omega)$ is a given right hand side, $A: H^{1}_{0}(\Omega) \rightarrow H^{-1}(\Omega)$ denotes a nonlinear elliptic operator and $\langle \cdot , \cdot \rangle$ is the dual pairing of $H^{-1}(\Omega)$ and $H^{1}_{0}(\Omega)$. Note that the steady Richards equation is a PDE of  type  \eqref{nonlinear_pde}. Moreover, treating the steady incompressible Navier-Stokes equations just requires to replace $H^{1}_{0}(\Omega)$ in \eqref{nonlinear_pde} by a divergence-free space, which we do not address in this paper to simplify the presentation. For the same reason we also restrict to $d=2$ and assume that the domain $\Omega$ is given as a tensor product, i.e. $\Omega:= \Omega_{1D}\times \omega$ with $\Omega_{1D},\omega \subset \mathbb{R}$. 

We use the term `dimensional reduction' in the sense of Vogelius and Babu\v{s}ka \cite{VogBab81a}, which means that a dimensional reduction method reduces the space dimension of the considered PDE by at least one. Needless to say that a dimensional reduction method may therefore be also seen as a model order reduction procedure. \\

There are a large variety of dimensional reduction methods and low rank tensor based approximations.  The asymptotic expansion technique \cite{Stoker1992,Bear88} is based on an expansion of the solution dependent on the presumed small ratio between the length of the domain in transverse and dominant direction. This method neglects the transverse dynamics and is only valid if the considered domain is very thin, or equivalently, the solution is constant along the vertical direction, which is often not the case.

To overcome this difficulty  in the work by Vogelius and Babu\v{s}ka \cite{VogBab81a,VogBab81b,VogBab81c} the hierarchical model reduction (HMR) approach has been introduced in the context of heat conduction in plates and shells. The idea of HMR is to perform a Galerkin projection of the full variational problem onto a reduced, $m$-dimensional space, which combines the full solution space in the dominant direction with a $m$-dimensional reduction space in the transverse direction, spanned by modal orthonormal basis functions. This yields a (possibly nonlinear) system of $m$ equations in one space dimension. The application and applicability of the HMR approach for linear advection-diffusion problems that exhibit a dominant flow direction has been studied and demonstrated by Perotto, Ern and Veneziani in \cite{ErnPerVen2008, PerErnVen10} in a more general geometric setting. Exploiting that HMR yields a hierarchy of reduced models determined by the reduction space, the dimension of the models is chosen adaptively 
in different subdomains of $\Omega$, employing an iterative substructuring method to couple the local models \cite{PerErnVen10,PerVen14}. In all these contributions the $m$-dimensional reduction space is spanned by a priori chosen boundary-adapted Legendre or trigonometric polynomials. 

The key idea of the hierarchical model reduction method based on reduced basis techniques (RB-HMR), introduced in \cite{OS10,OhlSme2011}, is to use a highly nonlinear approximation in the sense of 
\cite{Dev98} for the construction of the reduction space. This is realized by first deriving a parametrized one-dimensional problem in the transverse direction from the full problem, where the parameters reflect the influence from the unknown solution in the dominant direction. In a second step, reduced basis (RB) methods are used to generate a snapshot set from the solution manifold of the parametrized transverse problems and 
to construct the reduction space from these snapshots by a proper orthogonal decomposition (POD). In this way, both in the construction of the solution manifold and the subsequent choice of
the basis functions, information on the full solution is included, and the RB-HMR approach benefits from the good approximation properties of RB methods \cite{Pinkus85,DePeWo12,KahVol07}. This yields often an exponentially fast convergence of the RB-HMR method even for non-smooth functions and a more rapid convergence of the RB-HMR method than the classical HMR approach based on polynomials \cite{OS10}. 
It has also been demonstrated in \cite{OS10} for linear problems that thanks to its rapid convergence and the fact that the parametrized problems are of lower dimension than the full problem, the RB-HMR approach yields in many cases a very accurate approximation at a smaller runtime, including the costs for the construction of the reduction space, than a corresponding full dimensional finite element method (FEM) solve. For these reasons, we consider the RB-HMR approach in this paper.

While HMR constitutes an interpolation between the full model and the lower dimensional model via the dimension of the reduction space, in the geometrical multiscale approach models in one space dimension or lumped models, as say electronic network models, are locally enhanced with the full dimensional model by a domain decomposition scheme (cf.~\cite{ForGerNobQua2001,ForNobQuaVen1999}).
Finally, similar to HMR also the proper generalized decomposition method (cf.~\cite{AmMoChKe06,ChiAmmCue2010,CanEhrLel11} and references therein) is a tensor based approximation, but the tensor products in the expansion are computed iteratively by solving the Euler-Lagrange equations corresponding to the considered problem. \\

The key challenge in applying dimensional reduction to nonlinear PDEs is the efficient evaluation of the nonlinear operator, which requires in principle computations that scale with the degrees of freedom of the full system and not the reduced one as in the linear case. This is a general issue for projection-based model order reduction methods for nonlinear PDEs or nonlinear systems and several ways to tackle this problem have been proposed. Common to all these approaches is a first step in which an additional basis --- a so-called collateral basis --- is constructed say via a POD or a greedy algorithm to approximate the range of the nonlinear operator.  The methods then differ in the way the coefficients are computed. 

The Gauss-Newton with approximated tensors method \cite{CaFaCoAm2013,CarBouFar2011} is based on the gappy POD \cite{AWWB08,BuDaWi04} and therefore employs a projection via a gappy inner product defined as a linear combination of evaluations in certain points in the spatial domain. The discrete empirical interpolation method (DEIM) \cite{ChaSor10} and the slightly more general empirical operator interpolation method (EOIM) \cite{DroHaaOhl2012} employ the empirical interpolation method (EIM)  \cite{BMNP04,MNPP09,Tonn2011,GMNP07}. The latter allows for the interpolation of a parametrized function via Lagrangian interpolants, where both the collateral basis and the interpolation points are constructed by a greedy algorithm. It has been applied for the approximation of parametrized nonlinear PDEs within the framework of RB methods for instance in \cite{GMNP07}. A rigorous a posteriori error estimator for parametrically smooth functions has been introduced in \cite{EfGrPa10}. Moreover, a (non-rigorous) hierarchical a posteriori error estimator, which compares the solution with an approximation obtained by employing a richer collateral basis space, has been derived in \cite{BMNP04} for the EIM, in \cite{DroHaaOhl2012} for the EOIM, and in \cite{WirSorHaa12} for the DEIM. A rigorous a posteriori error estimator for the DEIM is presented in \cite{ChaSor10,ChaSor12} but the constant in the estimate depends on the underlying discretization. To facilitate an approximation of functions of low regularity both in the EOIM and the recently introduced generalized empirical interpolation method (GEIM) \cite{MaMu13} the EIM is generalized by considering (also) the evaluation of linear functionals. A priori error analysis for the GEIM as introduced in \cite{MaMu13} has been provided in \cite{MaMuTu13}. In this paper we apply the POD to construct the collateral basis as the POD is optimal in an $L^{2}$-sense and use the GEIM to select interpolating functionals. \\

As the dependency of the range of the nonlinear operator on the parameter and the spatial variables is in general non-smooth, we expect that we need many collateral basis functions and interpolating functionals to obtain an accurate approximation. To speed up the (online) computations often localized approximations are considered for instance by constructing (offline) a partition of the parameter space \cite{Wie15,EftSta12,DroHaaOhl2012} or the time domain \cite{DrHaOh11} and computing local collateral bases associated with each element of the partition. At the online stage, the correct basis is chosen following a certain criterion. Recently, it has been proposed to employ machine learning techniques to form clusters of similar snapshots and compute a collateral basis for each cluster in the offline stage for the  Gauss-Newton with approximated tensors method \cite{AmZaFa12} and the DEIM \cite{PeBuWiBu13}. The appropriate local collateral space is then chosen at the online stage either by a distance measure \cite{AmZaFa12} or classification strategies based on machine learning \cite{PeBuWiBu13}. 

However, in all partitioning methods based on the EIM \cite{Wie15,EftSta12,DroHaaOhl2012,DrHaOh11,PeBuWiBu13}  the number of interpolating points equals the number of (local) collateral basis functions, which may lead to an insufficient resolution of the (non-smooth) collateral basis functions and thus a considerably less accurate approximation. Therefore, we propose to perform an adaptive partitioning of the spatial domain driven by a suitable error indicator until a certain tolerance is reached and define the global interpolant as a sum of the local interpolants. We employ this adaptive (generalized) empirical interpolant to approximate the nonlinear term in the inner products of the coefficients of the orthogonal projection on the collateral basis. This yields an automatic numerical integration program based on the (G)EIM which we call the adaptive empirical projection method (EPM). We emphasize that in case of a nonlinear term, which is smooth with respect to the spatial variable, this higher regularity is maintained as we project onto the global collateral basis and employ the localized interpolants only within the inner products of the coefficients. We prove rigorous a priori and a posteriori error estimators for the adaptive EPM, which do not require additional regularity of the nonlinear operator and are independent of the underlying finite element discretization. Note that we do not propose to employ the adaptive EPM instead of the above mentioned partitioning or clustering methods but rather suggest to combine them.\\


To extend the RB-HMR approach to nonlinear PDEs of type \eqref{nonlinear_pde}, we therefore propose to proceed in the following way. We employ a highly nonlinear approximation for the construction of the collateral basis. To generate a manifold of parametrized  one-dimensional operator evaluations we use the solutions of the parametrized dimensionally reduced problem, derived as in the linear case, and the associated parametrization. During an adaptive training set extension procedure the sets of solution and operator snapshots are simultaneously generated. The collateral basis space is constructed by applying a POD to the operator snapshots and for the computation of the coefficients we employ the adaptive EPM. 

The rigorous a priori and a posteriori error estimators for the adaptive EPM are employed for the derivation of a rigorous a posteriori error estimator based on the Brezzi-Rappaz-Raviart theory \cite{BRR81,CalRap1997} which estimates both the error contribution caused by model reduction and by the approximation of the nonlinear operator. Hence, another contribution of this paper is the extension of the results in \cite{VerPat05} and particularly \cite{CaToUr09} from quadratically nonlinear to general nonlinear PDEs of type \eqref{nonlinear_pde}. This a posteriori error estimator is used within the context of the adaptive snapshot generation procedure. 
Numerical experiments for the elliptic nonlinear diffusion equation show that in many cases the proposed error estimator provides a sharp upper bound for the error. Moreover, the numerical experiments demonstrate 
a fast convergence of the RB-HMR approach and a close to linear scaling in the number of degrees of freedom of the discretization used in the dominant direction. \\


The article is organized as follows. In Section \ref{epm} we introduce the adaptive EPM for the approximation of parametrized functions in $L^{2}(\omega)$. The approximation properties of the adaptive EPM are discussed and rigorous a priori and a posteriori error estimates are derived. In the subsequent Section 
\ref{sect-hmr-nonlin} the problem adapted RB-HMR framework \cite{OS10} 
is generalized to nonlinear problems, using the approximation properties of the adaptive EPM. The resulting 
model reduction algorithm is discussed in detail and analyzed rigorously based on the Brezzi-Rappaz-Raviart theory \cite{BRR81,CalRap1997}. Next, we analyze the convergence behavior and the computational efficiency of the RB-HMR approach numerically for an elliptic nonlinear diffusion problem in Section \ref{numerics_nonlin}. Furthermore, we investigate the reliability and effectivity of the proposed error estimators and test the applicability of the a priori and a posteriori bounds for the adaptive EPM. In Section \ref{conclusion} we provide some conclusions and final remarks.  

\section{The adaptive Empirical Projection Method}\label{epm}

In this section we introduce the adaptive EPM which aims at approximating all elements of a manifold $\mathcal{M} := \{ u(\mu,\cdot), \, \mu \in \mathcal{D}\} \subset L^{2}(\omega)$, where $u(\mu,\cdot) \in L^{2}(\omega)$ equals for instance the evaluation of a nonlinear differential operator in the solution of a PDE parametrized by $\mu$. Here $\omega=(y_{0},y_{1}) \subset \mathbb{R}$ and $\mathcal{D} \subset \mathbb{R}^{p}$ denotes the $p$-dimensional parameter domain. Needless to say that we may identify the manifold $\mathcal{M}$ with a (target) function $u: \mathcal{D} \times \omega$ for which we assume $u \in L^{2}(\mathcal{D}\times \omega)$. Moreover, we require that we have a snapshot set $\mathcal{M}_{\Xi}  := \{ u(\mu,\cdot), \, \mu \in \Xi\}$  of the function $u$ at our disposal, where $\Xi \subset \mathcal{D}$ is a finite dimensional training set of size $|\Xi| = n$. The space $W_{k}=\spanlin \{\kappa_{1},...,\kappa_{k}\}$ with $(\kappa_{i},\kappa_{j})_{L^{2}(\omega)} = \delta_{ij}$ is then defined through a POD, i.e.
\begin{equation}\label{POD_nonlin_epm}
W_{k}:= \arg \underset{\dim(\tilde{W}_{k})=k }{\underset{\tilde{W}_{k} \subset \spanlin \{  \mathcal{M}_{\Xi}  \}}{\inf}} \left( \frac{1}{n} \sum_{\mu \in \Xi} \underset{\tilde{w}_{k} \in\tilde{W}_{k}}{\inf} \|u(\mu,\cdot) - \tilde{w}_{k}\|_{L^{2}({\omega})}^{2}\right).
\end{equation}
\begin{algorithm}[t]
\SetAlgoNoLine
\caption{GEIM - Construction of interpolating functionals\label{eim}}
\textsc{GEIM}($\mathcal{K}^{I}$, $\Sigma^{I}$, $I$)\\
Set
$
\sigma_{1}^{I} := \underset{\sigma^{I} \in \Sigma^{I}}{\arg \sup} |\sigma^{I}(\kappa_{1}^{I})|,  \quad q_{1}^{I} = \frac{\kappa_{1}^{I}}{\sigma_{1}^{I}(\kappa_{1}^{I})}, \quad \text{and} \quad B_{11}^{I,1}  = 1.
$\\
\For{$m=1,\dots,k_{I}$}{
Solve for the coefficients $\alpha_{j}^{m-1}$:
$
\sum_{j=1}^{m-1} \alpha_{j}^{m-1} \sigma_{i}^{I}(q_{j}^{I})  =  \sigma_{i}^{I}(\kappa_{m}^{I}), \enspace i = 1,...,m-1. 
$\\
Compute the residual
$
r_{m}({y}) := \kappa_{m}^{I}({y}) - \sum_{j=1}^{m-1}  \alpha_{j}^{m-1} q_{j}^{I}({y}).
$\\
Set
$
\sigma_{m}^{I} =  \underset{\sigma^{I} \in \Sigma^{I}}{\arg \sup} | \sigma^{I}(r_{m}) |, \quad q_{m}^{I} = \frac{r_{m}}{\sigma_{m}^{I}(q_{m}^{I})}, \quad \text{and} \quad B_{ij}^{I,m}  = \sigma_{i}^{I}(q_{j}^{I}), \quad 1 \leq i,j \leq m.
$\label{selection_functional}}
\Return  $\mathcal{S}^{I}$, $\mathcal{Q}^{I}:=\{q_{1}^{I},...,q_{k_{I}}^{I}\}$, $B^{I,k_{I}}$
\end{algorithm}
We approximate the function $u(\mu,\cdot)$ in the integrals of the orthogonal projection 
$$
P_{k}[u](\mu,y) :=\sum_{l = 1}^{k} \int_{{\omega}} u(\mu,z)\, \kappa_{l}({z})\,  d{z}\,\, \kappa_{l}(y) 
$$
by a (generalized) empirical interpolant $\mathcal{I}_{L}[u]$. The key idea of the adaptive EPM is that we adaptively decompose the domain $\omega$ into subdomains, construct local interpolants on each subdomain and then define the (global) interpolant $\mathcal{I}_{L}[u]$ as the sum of all local interpolants. To construct the latter we employ the set of functions $\kappa_{1},...,\kappa_{k}$, restrict them to the respective subdomain, apply a local POD to obtain a localized linear independent set of functions, and apply the GEIM \cite{MaMu13} locally to select the evaluating linear functionals from a (given) dictionary. Before we describe the adaptive EPM in detail we recall the GEIM and adapt some theoretical findings for the GEIM to our setting. \\
We suppose that we have given a dictionary $\Sigma$ of linear functionals $\sigma \in L^{2}(\omega)'$ of the form $\sigma(v) = (v,\varsigma)_{L^{2}(\omega)}$ for $v \in L^{2}(\omega)$ whose (unique) Riesz representation $\varsigma \in L^{2}(\omega)$  satisfies  $\|\varsigma \|_{L^{2}(\omega)}=1$. For $I\subset \omega$ and $\varsigma \in L^{2}(\omega)$, $v \in L^{2}(I)$ we then define localized functionals $\sigma^{I}$ as $\sigma^{I}(v) := (v,\varsigma)_{L^{2}(I)}$ and denote the corresponding localized dictionary by $\Sigma^{I}$. Note that the functions $\varsigma \in L^{2}(\omega)$ are the same for the functional $\sigma$ and its localized version $\sigma^{I}$. Additionally, we assume that the dictionaries $\Sigma^{I}$ are unisolvent in the sense that if we have for any $g \in \spanlin \{\mathcal{M}\}$ that $\sigma^{I}(g|_{I}) = 0$ for all $\sigma^{I} \in \Sigma^{I}$ this implies $g = 0$ almost everywhere in $I$. The selection of the interpolating functionals $\mathcal{S}^{I}:=\{\sigma_{1}^{I},...,\sigma_{k_{I}}^{I}\}$ for a given set of linear independent functions $\mathcal{K}^{I}:=\{\kappa_{1}^{I},...,\kappa_{k_{I}}^{I}\} \subset L^{2}(I)$ is described in Algorithm \ref{eim}. For a function $v \in L^{2}(\mathcal{D}\times I)$ we then define the local interpolant $\mathcal{I}^{I}_{k_{I}}[v](\mu,y):= \sum_{j=1}^{k_{I}} \alpha_{j}^{k_{I}}(\mu) q_{j}^{I}(y)$, where the coefficients are the solutions of: $\sum_{j=1}^{k_{I}} \alpha_{j}^{k_{I}}(\mu) B^{I,k_{I}}_{i,j} = \sigma_{i}^{I}(v(\mu;\cdot))$, $i=1,...,k_{I}$ and $B^{I,k_{I}}_{i,j} = \sigma_{i}^{I}(q_{j}^{I})$. The following lemma adapts some results for the GEIM to our setting.

\begin{lemma}\label{prop-eim}
Let the set of interpolating functionals $\mathcal{S}^{I}$ be selected by Algorithm \ref{eim} and let the assumptions from the previous paragraph be fulfilled. Then we have
\begin{enumerate}
\item The matrix $B^{I,k_{I}}$ is lower triangular with unity diagonal and hence invertible. Moreover, there holds $|B^{I,k_{I}}_{ij}|\leq 1$, $1 \leq i,j \leq k_{I}$. The set of functions $\mathcal{Q}^{I}$ forms a basis for the space $W^{I}_{k_{I}}:=\spanlin \{\mathcal{K}^{I}\}$ and the selection of the interpolating functionals is well-defined. 
\item The interpolation is exact for all $w \in W^{I}_{k_{I}}$.
\item There exist unique functions $\vartheta_{j}^{I} \in W^{I}_{k_{I}}$, that satisfy $\sigma^{I}_{i}(\vartheta_{j}^{I}) = \delta_{ij}$, $1 \leq i,j \leq k_{I}$.
\end{enumerate}
\end{lemma}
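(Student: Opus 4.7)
The plan is to prove the three items together by induction on the step index $m$ in Algorithm \ref{eim}, exploiting the greedy construction and the unisolvence hypothesis on the dictionaries $\Sigma^I$.

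For item 1, I start with the base case $m = 1$. Since $\kappa_1^I \neq 0$ by linear independence of $\mathcal{K}^I$, unisolvence yields some $\sigma^I \in \Sigma^I$ with $\sigma^I(\kappa_1^I) \neq 0$; the supremum defining $\sigma_1^I$ is then positive and attained (attainment being guaranteed in the GEIM framework of \cite{MaMu13}), and the normalization gives $B^{I,1}_{11} = \sigma_1^I(q_1^I) = 1$. For the inductive step I assume the assertion at stage $m-1$. The linear system for the coefficients $\alpha_j^{m-1}$ then has matrix $B^{I,m-1}$, which is lower triangular with unity diagonal by the induction hypothesis and hence invertible, so the coefficients are uniquely determined; by construction they force $\sigma_i^I(r_m) = 0$ for $i < m$, which gives $B^{I,m}_{i,m} = 0$ for $i < m$ and preserves the lower triangular shape in column $m$. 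Since $\kappa_m^I \notin \spanlin\{\kappa_1^I,\ldots,\kappa_{m-1}^I\} = \spanlin\{q_1^I,\ldots,q_{m-1}^I\}$, the residual satisfies $r_m \neq 0$; unisolvence then yields $\sigma_m^I$ with $\sigma_m^I(r_m) \neq 0$, and the normalization produces $B^{I,m}_{m,m} = 1$. The off-diagonal bound $|B^{I,m}_{ij}| \leq 1$ reduces for $i > j$ to $|\sigma_i^I(r_j)|/|\sigma_j^I(r_j)| \leq 1$, which holds because $\sigma_j^I$ was selected as a maximizer of $|\sigma^I(r_j)|$ over $\Sigma^I$ and $\sigma_i^I \in \Sigma^I$. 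The lower triangular structure with unity diagonal shows that $\mathcal{Q}^I$ is linearly independent; together with $\spanlin\{\mathcal{Q}^I\} \subseteq W^I_{k_I}$ and a dimension count this yields that $\mathcal{Q}^I$ is a basis of $W^I_{k_I}$.

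For item 2 I exploit the invertibility of $B^{I,k_I}$ just established. Given $w \in W^I_{k_I}$, expand $w = \sum_j c_j q_j^I$ in the basis $\mathcal{Q}^I$; then $\sigma_i^I(w) = \sum_j c_j B^{I,k_I}_{ij}$, so the $c_j$ solve exactly the linear system defining the interpolation coefficients of $\mathcal{I}^I_{k_I}[w]$, and uniqueness forces $\mathcal{I}^I_{k_I}[w] = w$. For item 3 I define $\vartheta_j^I := \sum_l \beta_l^{(j)} q_l^I$, where $\boldsymbol{\beta}^{(j)}$ is the unique solution of $B^{I,k_I} \boldsymbol{\beta}^{(j)} = e_j$. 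Existence and uniqueness follow again from invertibility of $B^{I,k_I}$, and since $\mathcal{Q}^I$ is a basis of $W^I_{k_I}$, $\vartheta_j^I$ is the unique element of $W^I_{k_I}$ realizing the prescribed duality $\sigma_i^I(\vartheta_j^I) = \delta_{ij}$.

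The main technical obstacle is the careful verification that the greedy selection step is well-defined at every stage: one must argue both that the supremum in line \ref{selection_functional} is strictly positive (via unisolvence applied to the nonzero residual $r_m$, which in turn relies on the induction hypothesis that $\spanlin\{q_1^I,\ldots,q_{m-1}^I\}$ equals $\spanlin\{\kappa_1^I,\ldots,\kappa_{m-1}^I\}$) and that it is attained (guaranteed in the standard GEIM setting, for instance through a finite or compact dictionary). Once this point is handled, the remaining arguments amount to structured bookkeeping of the consequences of the greedy normalization and the invertibility of $B^{I,k_I}$.
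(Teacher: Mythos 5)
Your proof is correct and follows essentially the same route as the paper: induction on the greedy step, with the triangular structure and unity diagonal of $B^{I,m}$ giving invertibility, unisolvence of $\Sigma^{I}$ combined with the linear independence of $\mathcal{K}^{I}$ guaranteeing a nonzero residual evaluation, and exactness plus the existence of the dual functions $\vartheta_{j}^{I}$ deduced from the invertibility of $B^{I,k_{I}}$. The only difference is that you spell out the matrix bounds, exactness, and item (iii) explicitly, whereas the paper delegates these standard steps to \cite{BMNP04,GMNP07,MNPP09}; your remark on attainment of the $\arg\sup$ is a reasonable added precision.
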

\begin{proof}
To prove (i) we adapt the argumentation in \cite{BMNP04,GMNP07} to our setting. We proceed by induction. By definition we have that $W_{1}^{I}= \spanlin \{ q_{1}^{I}\}$. Let us assume that $W_{k_{I}-1}^{I} = \spanlin \{ q_{1}^{I},...,q_{k_{I}-1}^{I}\}$. The construction of $q_{k_{I}}^{I}$ is well-defined if $B^{I,k_{I}-1}$ is invertible and $|\sigma^{I}_{k_{I}}(r_{k_{I}})|>0$. The properties of the matrix $B^{I,k_{I}-1}$ can be proven as in \cite{BMNP04,GMNP07}, exploiting the definition of the linear functionals $\sigma_{1}^{I},...,\sigma_{k_{I}-1}^{I}$ in Line \ref{selection_functional} of Algorithm \ref{eim}. To show $|\sigma^{I}_{k_{I}}(r_{k_{I}})|>0$ we argue by contradiction. Assume $|\sigma^{I}_{k_{I}}(r_{k_{I}})|=0$. Thanks to the unisolvence property of the dictionary $\Sigma^{I}$ we infer that $\kappa^{I}_{k_{I}} = \sum_{j=1}^{k_{I}-1} \alpha_{j}^{k_{I}-1} q_{j}^{I}$ almost everywhere in $I$. Exploiting the induction hypothesis we can express the functions $q_{1}^{I},...,q_{k_{I}-1}^{I}$ and thus $\kappa^{I}_{k_{I}}$ in the basis $\kappa_{1}^{I},...,\kappa_{k_{I}-1}^{I}$ which is contradictory to the requirement that the set of functions $\{\kappa_{1}^{I},...,\kappa_{k_{I}}^{I}\}$ is linear independent. Assertion (ii) can be proven as in \cite{MNPP09} and assertion (iii) follows from the invertibility of $B^{I,k_{I}}$.
\end{proof}

\begin{algorithm}[t]
\SetAlgoNoLine
\caption{adaptive Empirical Projection Method \label{adapt-EPM}}
\textsc{adaptive EPM}($\mathcal{K}$, $\Sigma$, $\mathcal{M}_{\Xi}$, $\varepsilon_{\text{{\tiny tol}}}^{\text{{\tiny int}}}$, $N_{\text{{\tiny max}}}^{\text{{\tiny int}}}$, $\Xi$)\\
{\bf Initialize} $I = {\omega}$,   $a^{I}=y_{0}$,  $b^{I}=y_{1}$; Define $\mathfrak{I}$ as the partition consisting of one element $I$. \\
Compute $[\mathcal{S}^{I},\mathcal{Q}^{I}, B^{I,k_{I}}]$= GEIM($\mathcal{K}$,$\Sigma^{I}$,$I$).\label{ori-EIM}\\
\For{$j = 1,...,N_{\text{{\tiny max}}}^{\text{{\tiny int}}}$}{
\ForEach{ $I \in \mathfrak{I}$}{Compute 
\begin{minipage}{0.8\linewidth}
\begin{equation}\label{local_error}
e^{I} := \frac{1}{n} \underset{\mu \in \Xi}{\sum} \| \sum_{l=1}^{k} \int_{{\omega}} (u(\mu,z) - \mathcal{I}_{L}[u](\mu,z)) \kappa_{l}(z) \,d{z}\, \kappa_{l} \|_{L^{2}(I)}^{2}
\end{equation}
\end{minipage}\\
\If{$e^{I} > \frac{|I|}{|{\omega}|} \cdot \varepsilon_{\text{{\tiny tol}}}^{\text{{\tiny int}}}$ }{
Set $I_{left} := [a^{I},(a^{I}+b^{I})/2],$
$I_{right} := [(a^{I}+b^{I})/2,b^{I}].$\\
Define the localized dictionaries $\Sigma^{I_{left}}$ and $\Sigma^{I_{right}}$. \\
Compute $\mathcal{K}^{I_{left}}:=\text{POD}(\{\kappa_{1}|_{I_{left}},...,\kappa_{k}|_{I_{left}}\})$ and $\mathcal{K}^{I_{right}}:=\text{POD}(\{\kappa_{1}|_{I_{right}},...,\kappa_{k}|_{I_{right}}\})$\label{local_POD}\\
Compute 
\quad $[\mathcal{S}^{I_{left}},\mathcal{Q}^{I_{left}}, B^{I_{left},k_{I_{left}}}] = \text{GEIM}(\mathcal{K}^{I_{left}},\Sigma^{I_{left}},I_{left})$,\label{local GEIM1}\\
\hspace*{1.75cm} $[\mathcal{S}^{I_{right}},\mathcal{Q}^{I_{right}}, B^{I_{right},k_{I_{right}}}] = \text{GEIM}(\mathcal{K}^{I_{right}},\Sigma^{I_{right}},I_{right})$,\label{local GEIM2}\\
Update $\mathfrak{I}$
}}
Set $e_{\text{{\tiny int}}} = \sum_{I \in \mathfrak{I}} e^{I}$. \\
\If{$e_{\text{{\tiny int}}} \leq \varepsilon_{\text{{\tiny tol}}}^{\text{{\tiny int}}}$}{{\bf go to line 21}}
}
\Return  $\mathcal{S}^{\mathfrak{I}}$, $\mathcal{Q}^{\mathfrak{I}}$, $B^{\mathfrak{I},k_{\mathfrak{I}}}$, $e_{\text{{\tiny int}}}$.
\end{algorithm}

To formulate the adaptive Empirical Projection Method \ref{adapt-EPM} and hence an adaptive integration algorithm based on GEIM, we introduce a non-uniform partition $\mathfrak{I}$ of ${\omega}$ with elements $I$. $a^{I}$ and $b^{I}$ denote the left and right interval boundary of $I$.  In Algorithm \ref{adapt-EPM} we first apply the standard GEIM on the whole domain ${\omega}$ in Line \ref{ori-EIM} to the set $\mathcal{K}:=\{\kappa_{1},...,\kappa_{k}\}$. If the integration error $e^{I}$ as defined in \eqref{local_error} for $I=\omega$ is smaller than the prescribed tolerance $\varepsilon_{\text{{\tiny tol}}}^{\text{{\tiny int}}}$ we stop without refining.  Otherwise we bisect in each iteration those intervals for which $e^{I} > (|I|/|{\omega}|) \cdot \varepsilon_{\text{{\tiny tol}}}^{\text{{\tiny int}}}$ holds. Note that the error $e^{I}$ is computable as it only requires the knowledge of $u$ for $\mu \in \Xi$, which can be accessed via $\mathcal{M}_{\Xi}$. On the new intervals we first define the localized dictionaries as described above and apply a POD in Line \ref{local_POD} to generate linear independent localized sets of functions $\mathcal{K}^{I_{left}}$ and $\mathcal{K}^{I_{right}}$ such that $\spanlin \{\mathcal{K}^{I_{m}}\} = \spanlin \{\kappa_{1}|_{I_{m}},...,\kappa_{k}|_{I_{m}}\}$, $m=left,right$. Note that we may alternatively define $\mathcal{K}^{I_{m}}$ as a linear independent subset of $\{\kappa_{1}|_{I_{m}},...,\kappa_{k}|_{I_{m}}\}$ with $\spanlin \{\mathcal{K}^{I_{m}}\} = \spanlin \{\kappa_{1}|_{I_{m}},...,\kappa_{k}|_{I_{m}}\}$, $m=left,right$. Subsequently we perform a localized GEIM to select sets of localized interpolating functionals $\mathcal{S}^{I_{left}}$ and $\mathcal{S}^{I_{right}}$ which are employed to define the local interpolants $\mathcal{I}_{k_{I_{left}}}^{I_{left}}[v]$ and $\mathcal{I}_{k_{I_{right}}}^{I_{right}}[w]$ for $v \in L^{2}(\mathcal{D} \times I_{left})$ and $w \in L^{2}(\mathcal{D} \times I_{right})$, respectively. We stop either if $e_{\text{{\tiny int}}} = \sum_{I \in \mathfrak{I}} e^{I} \leq \varepsilon_{\text{{\tiny tol}}}^{\text{{\tiny int}}}$ or if the maximal number of iterations $N_{\text{{\tiny max}}}^{\text{{\tiny int}}}$ is reached. The empirical projection of $u\in L^{2}(\mathcal{D}\times\omega)$ is then defined as
\begin{eqnarray}\label{epm-approx}
P_{k}^{L}[u](\mu,y) := \sum_{n = 1}^{k} \int_{{\omega}} \mathcal{I}_{L}[u](\mu,z)\, \kappa_{n}(z)\,  dz\,\, \kappa_{n}(y), \enspace
\mathcal{I}_{L}[u](\mu,y) := \sum_{I \in \mathfrak{I}} \mathcal{I}^{I}_{k_{I}}[u](\mu,y) = \sum_{I \in \mathfrak{I}} \sum_{j=1}^{k_{I}} \sigma^{I}_{j}(u(\mu,\cdot)) \vartheta_{j}^{I}({y}), 
\end{eqnarray}
where the functions $\vartheta_{j}^{I}$, $j=1,...,k_{I}$, have been defined in Lemma \ref{prop-eim} and $L:=\sum_{I\in \mathfrak{I}} k_{I}$. Finally, we remark that $N_{\text{{\tiny max}}}^{\text{{\tiny int}}}$ has been introduced for security purposes, as, so far, we could only prove the convergence of the adaptive EPM under certain assumption which are relatively mild, though. This issue as well as rigorous a priori and a posteriori bounds are addressed in the following subsection. 

\subsection{Rigorous a priori and a posteriori error analysis for the adaptive EPM}\label{bounds_epm}



To control the projection error $\| u - P_{k}[u] \|_{L^{2}(\mathcal{D}\times\omega)}$ by the POD error on the snapshot set, we interpret the discrete $L^{2}$-norm occurring in the definition of the POD-space \eqref{POD_nonlin_epm} as a numerical approximation of the corresponding integral with the Monte Carlo method, which is one new contribution of the proof, and subsequently use ideas of Kunisch and Volkwein \cite{KunVol02}. The main new contribution of Theorem \ref{apriori-epm} is the control of the term $\| P_{k}[u] - P_{k}^{L}[u]\|_{L^{2}(\mathcal{D}\times\omega)}$, which is possible due to the design of the adaptive EPM, using the Monte Carlo quadrature. To assess the integration error of the latter, we introduce the following notion \cite{EvaSwa00}: For sequences $\{X_{n}\}_{n}, \{Y_{n}\}_{n}$ of random variables we write $X_{n}= \mathcal{O}_{P}(Y_{n})$, if for any $\varepsilon > 0$ there exists $M_{\varepsilon},N_{\varepsilon} >0$ such that $P\left(\left| X_{n}/Y_{n}\right| > M_{\varepsilon}\right)<\varepsilon$ for all $n>N_{\varepsilon}$, where $P(E) $ denotes the probability of the event $E$. \\
We also introduce the operator $B_{n}: L^{2}(\omega) \rightarrow L^{2}(\omega)$, defined as
\begin{equation}
B_{n}(v) = \frac{1}{n} \sum_{\mu \in \Xi} \left(\int_{\omega} v(z)u(\mu,z)\,dz\, u(\mu,y) \right) \quad \forall v \in L^{2}(\omega). 
\end{equation}
Note that $B_{n}$ is a bounded, self-adjoint, and nonnegative operator and further compact thanks to its finite dimensional image. We denote by $\lambda_{l}^{n}$ the eigenvalues that satisfy the eigenvalue problem: Find $(\kappa_{l}^{n},\lambda_{l}^{n}) \in (L^{2}(\omega),\mathbb{R}^{+})$ such that
\begin{equation}\label{eigenvalue problem}
B_{n}\kappa_{l}^{n} = \lambda_{l}^{n}\kappa_{l}^{n},
\end{equation}
and assume that the eigenvalues $\lambda_{l}^{n}$ are listed in non-increasing order of magnitude, i.e. $\lambda_{1}^{n} \geq ... \geq \lambda_{d(n)} ^{n}> 0$ and $\lambda_{l}^{n}=0$ for $l>d(n)$. Note that we have added the superscript $n$ at the eigenvectors $\kappa_{l}$ to highlight their dependency on $n$ and $\Xi$.

\begin{theorem}[A priori error bound for the adaptive EPM]\label{apriori-epm} We assume that the parameter values $\mu \in \Xi$ are sampled from the uniform distribution over $\mathcal{D}$. Then for every $\varepsilon >0$ there exists an $N(\varepsilon)$ such that for all $n > N(\varepsilon)$ 
\begin{equation}\label{train_size_fixed}
\| u - P_{k}^{L}[u] \|_{L^{2}(\mathcal{D}\times\omega)} \leq \left(\underset{l = k + 1}{\overset{d(n)}{\sum}} \lambda_{l}^{n}\right)^{1/2}+ e_{\text{{\tiny int}}}^{1/2} + \varepsilon.
\end{equation} 
If furthermore $\lambda_{k}^{\infty} \neq \lambda_{k + 1}^{\infty}$ there exists an $N(\varepsilon)$ such that for all $n > N(\varepsilon)$ 
\begin{equation}\label{train_size_inf}
\| u - P_{k}^{L}[u] \|_{L^{2}(\mathcal{D}\times\omega)} \leq  \sqrt{2} \left(\underset{l = k + 1}{\overset{\infty}{\sum}} \lambda_{l}^{\infty}\right)^{1/2} + e_{\text{{\tiny int}}}^{1/2} + \varepsilon,
\end{equation}
and $\lambda_{l}^{n} \rightarrow \lambda_{l}^{\infty}$ for $1 \leq l \leq k$ as $n \rightarrow \infty$ and $\kappa_{l}^{n} \rightarrow \kappa_{l}^{\infty}$ strongly in $L^{2}(\omega)$ for $1 \leq l \leq k$ and $n \rightarrow\infty$, where $\{\lambda_{l}^{\infty}\}_{l=1}^{\infty}$ are the eigenvalues and $\kappa_{l}^{\infty}$ are the eigenfunctions of the operator $B: L^{2}({\omega}) \rightarrow L^{2}({\omega})$, defined as
\begin{equation}\label{compact operator}
B(v) =  \int_{\mathcal{D}} \int_{{\omega}} v({z}) u(\mu,z)\, d{z} \, u(\mu,y)\, d\mu\quad \text{for} \enspace v \in L^{2}({\omega}).
\end{equation}
Regarding the rate of convergence in $n$, we have that 
\begin{align}
\label{train_size_fixed_rate}\| u - P_{k}^{L}[u] \|_{L^{2}(\mathcal{D}\times\omega)} &\leq \left(\underset{l = k + 1}{\overset{d(n)}{\sum}} \lambda_{l}^{n}\right)^{1/2}+ e_{\text{{\tiny int}}}^{1/2} + \mathcal{O}_{P}(n^{-1/4})\\
\text{and} \quad 
\label{train_size_inf_rate}\| u - P_{k}^{L}[u] \|_{L^{2}(\mathcal{D}\times\omega)} &\leq  \sqrt{2} \left(\underset{l = k + 1}{\overset{\infty}{\sum}} \lambda_{l}^{\infty}\right)^{1/2} + e_{\text{{\tiny int}}}^{1/2} + \mathcal{O}_{P}(n^{-1/4}).
\end{align}
If Algorithm \ref{adapt-EPM} converges, i.e. $e_{\text{{\tiny int}}} \leq \varepsilon_{\text{{\tiny tol}}}^{\text{{\tiny int}}}$, the estimates \eqref{train_size_fixed} -- \eqref{train_size_inf_rate} hold with $e_{\text{{\tiny int}}}^{1/2}$ replaced by $(\varepsilon_{\text{{\tiny tol}}}^{\text{{\tiny int}}})^{1/2}$. 

\end{theorem}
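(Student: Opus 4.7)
The strategy is to split
\[
\|u - P_{k}^{L}[u]\|_{L^{2}(\mathcal{D}\times\omega)} \le \|u - P_{k}[u]\|_{L^{2}(\mathcal{D}\times\omega)} + \|P_{k}[u] - P_{k}^{L}[u]\|_{L^{2}(\mathcal{D}\times\omega)}
\]
by the triangle inequality, and to control each summand by translating an empirical average over $\Xi$ into its $\mu$-integrated counterpart via Monte Carlo quadrature. Two algebraic identities drive the argument. First, the POD optimality built into \eqref{POD_nonlin_epm} together with the spectral theorem for the compact self-adjoint operator $B_{n}$ yields
\[
\tfrac{1}{n}\sum_{\mu\in\Xi}\|u(\mu,\cdot)-P_{k}[u](\mu,\cdot)\|_{L^{2}(\omega)}^{2} \;=\; \sum_{l=k+1}^{d(n)}\lambda_{l}^{n}.
\]
Second, orthonormality of the $\kappa_{l}$ and the partition structure of $\mathfrak{I}$ give
\[
e_{\text{int}} \;=\; \sum_{I\in\mathfrak{I}} e^{I} \;=\; \tfrac{1}{n}\sum_{\mu\in\Xi}\|P_{k}[u](\mu,\cdot)-P_{k}^{L}[u](\mu,\cdot)\|_{L^{2}(\omega)}^{2}.
\]

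Since the $\mu\in\Xi$ are drawn i.i.d. uniformly on $\mathcal{D}$ and the involved integrands have finite second moment, a standard Chebyshev/CLT argument shows that each of the empirical means above differs from $|\mathcal{D}|^{-1}$ times the corresponding $L^{2}(\mathcal{D}\times\omega)$-norm squared by an $\mathcal{O}_{P}(n^{-1/2})$ term. Taking square roots via the elementary inequality $|\sqrt{a}-\sqrt{b}|\le\sqrt{|a-b|}$ promotes this into the $\mathcal{O}_{P}(n^{-1/4})$ contributions in \eqref{train_size_fixed_rate} and \eqref{train_size_inf_rate}, and the qualitative versions \eqref{train_size_fixed} and \eqref{train_size_inf} follow from convergence in probability and the definition of $\mathcal{O}_{P}$. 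The final claim that $e_{\text{int}}^{1/2}$ may be replaced by $(\varepsilon_{\text{tol}}^{\text{int}})^{1/2}$ upon successful termination is immediate from the stopping criterion of Algorithm \ref{adapt-EPM}.

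For \eqref{train_size_inf} and \eqref{train_size_inf_rate} I would further replace the $n$-dependent tail $\sum_{l\ge k+1}\lambda_{l}^{n}$ by the deterministic limit $\sum_{l\ge k+1}\lambda_{l}^{\infty}$. Monte Carlo convergence of the kernel of $B_{n}$ to that of $B$ in \eqref{compact operator} yields $B_{n}\to B$ in an operator topology strong enough for spectral stability, and combined with the gap assumption $\lambda_{k}^{\infty}\neq\lambda_{k+1}^{\infty}$, the arguments of Kunisch and Volkwein \cite{KunVol02} give $\lambda_{l}^{n}\to\lambda_{l}^{\infty}$ for $1\le l\le k$ and $\kappa_{l}^{n}\to\kappa_{l}^{\infty}$ strongly in $L^{2}(\omega)$. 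The factor $\sqrt{2}$ is then produced by the splitting $\|u-P_{k}[u]\|^{2}\le 2\|u-P_{k}^{\infty}[u]\|^{2} + 2\|P_{k}^{\infty}[u]-P_{k}[u]\|^{2}$, where $P_{k}^{\infty}$ denotes the orthogonal projection onto $\spanlin\{\kappa_{1}^{\infty},\dots,\kappa_{k}^{\infty}\}$; the first term integrates to $\sum_{l\ge k+1}\lambda_{l}^{\infty}$, and the second vanishes as the eigenfunctions converge.

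The main obstacle I anticipate is that the POD basis $\{\kappa_{l}^{n}\}$ and the adaptive interpolant $\mathcal{I}_{L}$ both depend on $\Xi$, so the empirical averages above are not genuine Monte Carlo estimates of a fixed integrand but of one correlated with the sampling points. The cleanest workaround I see is to route everything through the deterministic limit $(\lambda_{l}^{\infty},\kappa_{l}^{\infty},P_{k}^{\infty}[u])$, using the Kunisch--Volkwein convergence to absorb the $\Xi$-dependence of the POD basis into an $o_{P}(1)$ correction, and to apply Monte Carlo only to integrands independent of $\Xi$ such as $\|u-P_{k}^{\infty}[u]\|^{2}$. Handling the analogous $\Xi$-dependence of $\mathcal{I}_{L}$ inside $e_{\text{int}}$ without degrading the $\mathcal{O}_{P}(n^{-1/4})$ rate, in particular uniformly over the (data-driven) partition $\mathfrak{I}$, is the step I expect to require the most care.
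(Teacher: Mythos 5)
Your proposal follows essentially the same route as the paper: the same triangle-inequality split, the same identification of the two empirical averages with the POD tail $\sum_{l=k+1}^{d(n)}\lambda_{l}^{n}$ and with $e_{\text{int}}$, the Monte Carlo/law-of-large-numbers interpretation plus the central limit theorem for the $\varepsilon$ and $\mathcal{O}_{P}(n^{-1/4})$ terms, Kunisch--Volkwein compactness arguments for \eqref{train_size_inf} and the eigenpair convergence, and the stopping criterion of Algorithm \ref{adapt-EPM} for the final claim. The $\Xi$-dependence of the POD basis and of $\mathcal{I}_{L}$ that you flag is a real subtlety, but the paper's proof does not treat it either---it applies the strong law of large numbers and the CLT directly and cites \cite{KunVol02}, remarking only that the strong convergence $\kappa_{l}^{n}\rightarrow\kappa_{l}^{\infty}$ keeps the interpolating functionals well defined and $e_{\text{int}}$ bounded uniformly in $n$.
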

\begin{proof}
We begin with splitting the error into a projection error and an integration error:
\begin{equation}\label{apriori_est1}
\| u - P_{k}^{L}[u] \|_{L^{2}(\mathcal{D}\times\omega)} \leq \| u - P_{k}[u] \|_{L^{2}(\mathcal{D}\times\omega)}  + \| P_{k}[u] - P_{k}^{L}[u]\|_{L^{2}(\mathcal{D}\times\omega)} . 
\end{equation}
Thanks to the assumptions on $\Xi$ we can interpret for an arbitrary function $f \in L^{2}(\mathcal{D}\times\omega)$, the  term $I_{n}(F):= (1/n) \sum_{\mu \in \Xi}  \| f(\mu,\cdot) \|_{L^{2}({\omega})}^{2}$ as a numerical approximation of the integral $I(f) = \int_{\mathcal{D}} \int_{{\omega}} f^{2} \, d{y} \, d\mu$ with the Monte Carlo method. Thus, the strong law of large numbers (see for instance \cite{Fel68}) yields that for every $\delta >0$ there exists an $N'(\delta)$ such that for all $n > N'(\delta)$ 
\begin{align*}
 \| u - P_{k}[u] \|_{L^{2}(\mathcal{D}\times\omega)}^{2} 
 = \left(\frac{1}{n} \sum_{\mu \in \Xi}  \| u(\mu,\cdot) - \sum_{l=1}^{k} \int_{{\omega}}u(\mu,{z}) \kappa_{l}({z}) \,d{z} \,\kappa_{l}\|_{L^{2}({\omega})}^{2}\right)  + \delta \leq \left(\underset{l = k + 1}{\overset{d(n)}{\sum}} \lambda_{l}^{n}\right) + \delta,
\end{align*}
where we have used the classical estimate for the POD error.
Approximating also the integral of the second term in \eqref{apriori_est1} with a Monte Carlo method and using the outcome of Algorithm \ref{adapt-EPM}, we obtain that for every $\delta >0$ there exists an $N^{\prime\prime}(\delta)$ such that for all $n > N^{\prime\prime}(\delta)$ 
\begin{align*}
& \| P_{k}[u] - P_{k}^{L}[u] \|_{L^{2}(\mathcal{D}\times\omega)}^{2} 
 = \left(\frac{1}{n} \sum_{\mu \in \Xi}  \| \sum_{l=1}^{k} \int_{{\omega}}u(\mu,{z}) \kappa_{l}({z})  \,d{z} \,\kappa_{l} - \sum_{l=1}^{k} \int_{{\omega}}\mathcal{I}_{L}[u](\mu,{z})  \kappa_{l}({z}) \,d{z} \,\kappa_{l}\|_{L^{2}({\omega})}^{2}\right)  + \delta\\
& \qquad = \left(\frac{1}{n} \sum_{\mu \in \Xi}  \| \sum_{l=1}^{k} \int_{{\omega}}(u(\mu,{z}) -  \mathcal{I}_{L}[u](\mu,{z}))  \kappa_{l}({z})  \,d{z} \,\kappa_{l}\|_{L^{2}({\omega})}^{2}\right) + \delta
 \leq e_{\text{{\tiny int}}} + \delta.
\end{align*}
Choosing $\delta = \varepsilon/2$ and $N(\varepsilon)=\max\{N'(\delta),N^{\prime\prime}(\delta)\}$ yields \eqref{train_size_fixed}. \\
To show \eqref{train_size_inf} we first note that 
the operator $T:L^{2}({\omega}) \rightarrow L^{2}(\mathcal{D})$, defined as
$
(Tv)(\mu) := \int_{{\omega}} u(\mu,{y}) v({y}) \, d{y}, \ \text{for} \enspace v \in L^{2}({\omega}),
$
is a Hilbert-Schmidt integral operator and thus compact. Boundedness of the operator $\mathcal{Y}:L^{2}(\mathcal{D}) \rightarrow L^{2}({\omega})$, defined as
$
\mathcal{Y}(w) := \int_{\mathcal{D}} u(\mu,{y}) w(\mu) \, d\mu, \ \text{for} \enspace w \in L^{2}(\mathcal{D}),
$
yields that $B$ is a compact operator as well. The estimate \eqref{train_size_inf}, $\lambda_{l}^{n} \rightarrow \lambda_{l}^{\infty}$ for $1 \leq l \leq k$ as $n \rightarrow \infty$ and $\kappa_{l}^{n} \rightarrow \kappa_{l}^{\infty}$ strongly in $L^{2}(\omega)$ for $1 \leq l \leq k$  can then be proven completely analogous to the argumentation in Section 3.2 of \cite{KunVol02}. Note that the convergence of  $\kappa_{l}^{n}$ to $\kappa_{l}^{\infty}$ strongly in $L^{2}(\omega)$ for $1 \leq l \leq k$ and $n \rightarrow\infty$ leads to the well-definedness of the interpolating functionals $\mathcal{S}^{I}$, $I\in\mathfrak{I}$ also for $n\rightarrow \infty$ and thus to the boundedness of the term $e_{int}$ independent of $n$.\\ 
Finally, the (probabilistic) convergence rate in $n$ is a direct consequence of the central limit theorem (see for instance \cite{Fel68,Caf98}).
\end{proof}

We remark that the assumptions on $\Xi$ can be weakened in the sense that also an adaptive sampling strategy can be considered. This may change the convergence rate of the Monte Carlo method, but does not affect the proof of Theorem \ref{apriori-epm}.   
Alternatively, a quasi-Monte Carlo method may be used, which has an improved convergence rate of approximately $\mathcal{O}_{P}((\log n)^{c}n^{-1})$ for some constant $c$ \cite{Caf98}. \\

Next, we prove under certain assumptions that the integration error $e_{int}$ converges to $0$ if $k \rightarrow \infty$ and thus that the adaptive integration Algorithm \ref{adapt-EPM} converges. The main ingredients of the proof are the classical POD error bound, the exploitation of the properties of the GEIM as recalled in Lemma \ref{prop-eim} on the elements $I \in \mathfrak{I}$, and the bounds of the interpolation error of the localized GEIM. \\

To this end we introduce for each $I \in \mathfrak{I}$ the Lebesgue constant with respect to the $L^{2}(I)$-norm \cite{MaMu13}
as 
\begin{equation}\label{lebesgue}
\Lambda_{k_{I}}^{I} := \sup_{g(\mu) \in \mathcal{M}} \frac{\| \mathcal{I}^{I}_{k_{I}}[g](\mu,\cdot)\|_{L^{2}(I)}}{\| g(\mu,\cdot) \|_{L^{2}(I)}}.
\end{equation}

Based on that we obtain the following bound for $e_{int}$.

\begin{proposition}[Convergence of the adaptive EPM]\label{apriori-epm-convergence} Let $\lambda_{l}^{n}$ be the eigenvalues of the eigenvalue problem \eqref{eigenvalue problem}, $\Lambda_{k_{I}}^{I}$, $I \in \mathfrak{I}$ the Lebesgue constants as defined in \eqref{lebesgue}, and $e_{int}=\sum_{I\in \mathfrak{I}} e^{I}$ with $e^{I}$ defined in \eqref{local_error}. Then there holds
\begin{eqnarray}\label{train_size_fixed_convergence} 
e_{int}^{1/2} \leq \sqrt{k} \left(1 + \left(\sum_{I \in \mathfrak{I}} (\Lambda_{k_{I}}^{I})^{2}\right)^{1/2}\right) \left(\underset{l = k + 1}{\overset{d(n)}{\sum}} \lambda^{n}_{l}\right)^{1/2}. 
\end{eqnarray} 
\end{proposition}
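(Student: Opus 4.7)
The plan is to first reduce everything to a bound on $\|u(\mu,\cdot)-\mathcal{I}_L[u](\mu,\cdot)\|_{L^2(\omega)}^2$, and then compare that interpolation error against the POD projection error. As a first step I would use that $\mathfrak{I}$ partitions $\omega$ and that $\{\kappa_l\}_{l=1}^k$ is an $L^2(\omega)$-orthonormal system to rewrite
\[
e_{int}=\sum_{I\in\mathfrak{I}}e^{I}=\frac{1}{n}\sum_{\mu\in\Xi}\sum_{l=1}^{k}\Bigl(\int_{\omega}\bigl(u(\mu,z)-\mathcal{I}_{L}[u](\mu,z)\bigr)\kappa_{l}(z)\,dz\Bigr)^{2}.
\]
Applying Cauchy--Schwarz to each inner product and using $\|\kappa_l\|_{L^2(\omega)}=1$ then produces the factor $k$ and yields $e_{int}\le (k/n)\sum_{\mu\in\Xi}\|u(\mu,\cdot)-\mathcal{I}_{L}[u](\mu,\cdot)\|_{L^2(\omega)}^{2}$.

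Next I would split the interpolation error via the triangle inequality into $\|u-P_{k}[u]\|_{L^{2}(\omega)}+\|P_{k}[u]-\mathcal{I}_{L}[u]\|_{L^{2}(\omega)}$. The first term is the projection error that will ultimately feed into the POD estimate. For the second term I would exploit that $P_{k}[u](\mu,\cdot)=\sum_{l=1}^{k}c_{l}(\mu)\kappa_{l}$, so its restriction to any $I\in\mathfrak{I}$ lies in $W^{I}_{k_{I}}=\spanlin\{\kappa_{1}|_{I},\dots,\kappa_{k}|_{I}\}$. By Lemma~\ref{prop-eim}(ii) the localized GEIM interpolant is exact on $W^{I}_{k_{I}}$, hence $\mathcal{I}^{I}_{k_{I}}[P_{k}[u]]=P_{k}[u]|_{I}$, and by linearity
\[
P_{k}[u]|_{I}-\mathcal{I}^{I}_{k_{I}}[u]=\mathcal{I}^{I}_{k_{I}}\bigl[P_{k}[u]-u\bigr].
\]
The Lebesgue-constant bound \eqref{lebesgue} then gives $\|\mathcal{I}^{I}_{k_{I}}[P_{k}[u]-u]\|_{L^{2}(I)}\le \Lambda_{k}^{I}\|P_{k}[u]-u\|_{L^{2}(I)}$ on each element of the partition.

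To assemble the local bounds into a global one I would sum over $I$, obtaining $\sum_{I}(\Lambda_{k}^{I})^{2}\|P_{k}[u]-u\|_{L^{2}(I)}^{2}$, and then use the elementary inequality $\sum_{I}a_{I}b_{I}\le(\sum_{I}a_{I})(\sum_{I}b_{I})$ for non-negative reals (a consequence of expanding the right-hand side as $\sum_{I,J}a_{I}b_{J}$) with $a_{I}=(\Lambda_{k}^{I})^{2}$ and $b_{I}=\|P_{k}[u]-u\|_{L^{2}(I)}^{2}$, so that $\sum_{I}b_{I}=\|P_{k}[u]-u\|_{L^{2}(\omega)}^{2}$. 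Taking a square root and combining with $\|u-P_{k}[u]\|_{L^{2}(\omega)}$ then produces exactly the prefactor $1+\bigl(\sum_{I}(\Lambda_{k}^{I})^{2}\bigr)^{1/2}$.

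In the final step I would average the resulting estimate over the snapshots $\mu\in\Xi$ and invoke the classical discrete POD error bound
$(1/n)\sum_{\mu\in\Xi}\|u(\mu,\cdot)-P_{k}[u](\mu,\cdot)\|_{L^{2}(\omega)}^{2}\le\sum_{l=k+1}^{d(n)}\lambda_{l}^{n}$,
take square roots, and this yields \eqref{train_size_fixed_convergence}. The step I expect to require the most care is the invocation of the Lebesgue constant on $P_{k}[u]-u$, since as written \eqref{lebesgue} only takes the supremum over $\mathcal{M}$; either one extends the definition to the operator norm of the linear map $\mathcal{I}^{I}_{k_{I}}$ on $L^{2}(I)$ (which coincides with the standard Lebesgue constant), or one argues that the bound on $\mathcal{M}$ transfers to its linear span and thereby to $P_{k}[u]-u$ through the exactness identity above.
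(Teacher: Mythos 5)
Your proposal is correct and follows essentially the same route as the paper's proof: orthonormality plus Cauchy--Schwarz to reduce $e_{int}$ to $\frac{k}{n}\sum_{\mu}\|u-\mathcal{I}_{L}[u]\|_{L^{2}(\omega)}^{2}$, a triangle-inequality split whose two terms (after the exactness of the local GEIM on $W^{I}_{k_{I}}$) are the projection error and $\mathcal{I}_{L}[P_{k}[u]-u]$ bounded elementwise by the Lebesgue constants, assembled with the same crude product inequality, and finished by the classical discrete POD bound. The caveat you flag about \eqref{lebesgue} taking the supremum only over $\mathcal{M}$ is a fair observation (the paper applies the constant to $P_{k}[u]-u$ without comment), and your suggested reading as the $L^{2}(I)$ operator norm of $\mathcal{I}^{I}_{k_{I}}$ is the natural repair.
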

\begin{proof}
Let $I$ be an arbitrary interval in $\mathfrak{I}$. Exploiting $(\kappa_{i},\kappa_{j})_{L^{2}(\omega)} = \delta_{ij}$ twice, we obtain 
\begin{align}\label{est_app1}
e_{int} \leq \frac{1}{n} \sum_{\mu \in \Xi} \sum_{l=1}^{k} \left( \int_{{\omega}}  (u(\mu,{y}) - \mathcal{I}_{L}[u](\mu,{y}) ) \kappa_{l}({y}) \,d{y} \right )^{2} \leq  \frac{1}{n} \sum_{\mu \in \Xi} k \|u(\mu,\cdot) - \mathcal{I}_{L}[u](\mu,\cdot)  \|_{L^{2}({\omega})}^{2}. 
\end{align}
For each $\mu \in \Xi$ we can further estimate:
\begin{align*}
 \|u(\mu,\cdot) - \mathcal{I}_{L}[u](\mu,\cdot)  \|_{L^{2}({\omega})}\leq \underset{(i)}{\underbrace{\| u(\mu,\cdot) - \mathcal{I}_{L}[P_{k}[u]](\mu,\cdot) \|_{L^{2}({\omega})}}} + \underset{(ii)}{\underbrace{\| \mathcal{I}_{L}[P_{k}[u]](\mu,\cdot) - \mathcal{I}_{L}[u] (\mu,\cdot)\|_{L^{2}({\omega})}}}.
\end{align*}
As the GEIM is exact for all $w \in W_{k_{I}}^{I}$ (see Lemma \ref{prop-eim}), we obtain for $(i)$:
\begin{align}\label{est100}
\| u(\mu,\cdot) &- \mathcal{I}_{L}[P_{k}[u]](\mu,\cdot) \|_{L^{2}({\omega})}^{2} = \sum_{I \in \mathfrak{I}} \| u(\mu,\cdot) - \mathcal{I}_{L}[P_{k}[u]](\mu,\cdot) \|_{L^{2}(I)}^{2} 
= \sum_{I \in \mathfrak{I}} \| u(\mu,\cdot) - P_{k}[u](\mu,\cdot) \|_{L^{2}(I)}^{2}.
\end{align}
Using the definition of the Lebesgue constant we get for $(ii)$: 
\begin{align}
\nonumber\| \mathcal{I}_{L}[P_{k}[u]](\mu,\cdot) - \mathcal{I}_{L}[u] (\mu,\cdot)\|_{L^{2}({\omega})}^{2} & =  \sum_{I \in \mathfrak{I}} \| \mathcal{I}_{L}[P_{k}[u]](\mu,\cdot) - \mathcal{I}_{L}[u] (\mu,\cdot)\|_{L^{2}(I)}^{2} \\[-1.5ex]
\label{est200}\\[-1.5ex]
\nonumber & \leq \sum_{I \in \mathfrak{I}} (\Lambda_{k_{I}}^{I})^{2} \| P_{k}[u](\mu,\cdot) - u(\mu,\cdot) \|_{L^{2}(I)}^{2}.
\end{align}
By combining the estimates \eqref{est100} and \eqref{est200} we obtain
\begin{align}\label{lebesgue1}
\| u &(\mu,\cdot)- \mathcal{I}_{L}[u](\mu,\cdot) \|_{L^{2}({\omega})} \leq \left\{1 + \left(\sum_{I \in \mathfrak{I}} (\Lambda_{k_{I}}^{I})^{2}\right)^{1/2} \right\} \| u(\mu,\cdot) - P_{k}[u](\mu,\cdot) \|_{L^{2}({\omega})}.
\end{align}
The estimates \eqref{est_app1} and \eqref{lebesgue1} together with the classical estimate of the POD-error yield the desired result
\begin{align*}
\nonumber & e_{int} \leq  \frac{1}{n} \sum_{\mu \in \Xi} \left\{k \left(1 + \left(\sum_{I \in \mathfrak{I}} (\Lambda_{k_{I}}^{I})^{2}\right)^{1/2} \right)^{2} \| u(\mu,\cdot) - P_{k}[u](\mu,\cdot) \|_{L^{2}({\omega})}^{2}\right\} \\
& \qquad\qquad\leq k \left(1 + \left(\sum_{I \in \mathfrak{I}} (\Lambda_{k_{I}}^{I})^{2}\right)^{1/2} \right)^{2} \left(\underset{l = k + 1}{\overset{d(n)}{\sum}} \lambda^{n}_{l}\right).
\end{align*}
\end{proof}
To obtain convergence of the adaptive EPM we thus need that the Lebesgue constant increases rather moderately for growing $k$. Exploiting the properties of the entries of the matrices $B^{\mathfrak{I},k_{\mathfrak{I}}}$ it can be proven (see \cite{MaMu13}) that  the Lebesgue constants  $\Lambda_{k_{I}}^{I}$, $I \in \mathfrak{I}$ can be bounded as follows:
\begin{equation}\label{Lebsegue_bound}
\Lambda_{k_{I}}^{I} \leq 2^{k_{I}-1} \max_{i=1,...,k_{I}} \| q_{i} \|_{L^{2}(I)}.
\end{equation}
Therefore the POD-error $\sum_{l=k+1}^{d(n)} \lambda_{l}^{n}$ has to converge exponentially fast so that \eqref{train_size_fixed_convergence} yields convergence of the adaptive EPM. However, numerical results (see \cite{MMPY14}) show that the Lebesgue constant increases much slower than anticipated by \eqref{Lebsegue_bound} and in many cases even linear. Very recently it has been demonstrated in \cite{MMPY14} that for $v \in L^{2}(I)$ the localized generalized empirical interpolant $\mathcal{I}^{I}_{k_{I}}[v]$ can be interpreted as a Petrov-Galerkin approximation of $v$ where the approximation space is $W^{I}_{k_{I}}$ and the test space is spanned by the Riesz representations of the functionals $\sigma^{I} \in \mathcal{S}^{I}$ in $L^{2}(I)$. The Lebesgue constant $\Lambda^{I}_{k_{I}}$ then equals the reciprocal of the inf-sup constant associated with those approximation and trial spaces \cite{MMPY14}. This relates the Lebesgue constant to the considered dictionary $\Sigma$ and allows some guidance on how to choose $\Sigma$.\\
We remark that the proofs for the convergence rates of the EIM \cite{MNPP09} and for the GEIM \cite{MaMuTu13} crucially depend on the fact that the set of functions passed to Algorithm 2.1 are chosen by a greedy algorithm. Hence these results do not apply in our setting where we apply a POD. \\

Note also that Proposition \ref{apriori-epm-convergence} yields an upper bound for the (computable) integration error $e_{int}$. Therefore, we employ the a priori bounds \eqref{train_size_fixed} in Theorem \ref{apriori-epm} to derive a rigorous a posteriori estimator by comparing with a superior approximation $P_{k'}^{L'}[u]$. We emphasize that due to the usage of the Monte Carlo method the a posteriori error estimate will be a probabilistic estimate. To determine the number of samples $n$ needed to ensure an integration error due to the Monte Carlo approximation of at most $\varepsilon_{MC}$ with a confidence level $\mathcal{C}$ we introduce the empirical variances 
\begin{align*}
\nonumber &\qquad\quad \varsigma_{1} = \left[\frac{1}{n} \sum_{\mu \in \Xi}\left(\|u(\mu,\cdot) - P_{k}[u](\mu,\cdot)\|_{L^{2}(\omega)}^{2} - \left\{\frac{1}{n} \sum_{\mu \in \Xi}\|u(\mu,\cdot) - P_{k}[u](\mu,\cdot)\|_{L^{2}(\omega)}^{2}\right\}\right)^{2}\right]^{1/2},\\
\nonumber  &\qquad\quad \varsigma_{2} = \left[\frac{1}{n} \sum_{\mu \in \Xi}\left(\|P_{k}[u](\mu,\cdot) - P_{k}^{L}[u](\mu,\cdot)\|_{L^{2}(\omega)}^{2} - \left\{\frac{1}{n} \sum_{\mu \in \Xi}\|P_{k}[u](\mu,\cdot) - P_{k}^{L}[u](\mu,\cdot)\|_{L^{2}(\omega)}^{2}\right\}\right)^{2}\right]^{1/2}.
\end{align*}
Then, we obtain the following result.
\begin{proposition}[An a posteriori error estimate for the adaptive EPM]\label{apost-epm}
Let the assumptions of Theorem \ref{apriori-epm} be fulfilled and let $\varepsilon_{\text{{\tiny tol}}}$ be a given tolerance. Then the error estimate 
\begin{eqnarray}
\label{apost-equation-epm} \| u - P_{k}^{L}[u] \|_{L^{2}(\mathcal{D}\times \omega)} \leq  \varepsilon_{\text{{\tiny tol}}} + \Delta^{\text{{\tiny EPM}}} + e_{\text{{\tiny int}}}^{1/2} + \mathcal{O}_{P}(n^{-1/4}) \\
\label{delta_epm} \text{holds with} \qquad \Delta^{\text{{\tiny EPM}}} := \| P_{k'}^{L'}[u] - P_{k}^{L}[u] \|_{L^{2}(\mathcal{D}\times \omega)},
\end{eqnarray}
where $k'$ is defined as the minimal number in \{k+1,...,d(n)\}, such that 
\begin{equation}\label{pod_error_estimate}
 \left(\underset{j = l}{\overset{d(n)}{\sum}} \lambda_{j}^{n}\right)^{1/2} \leq \varepsilon_{\text{{\tiny tol}}},
\end{equation} 
and $L'$ is determined by  Algorithm \ref{adapt-EPM}, requiring $L' > L$. \\
Let  $\varepsilon_{MC}$ be a given tolerance for the error caused by the Monte Carlo approximation, $\mathcal{C}$ a given confidence level, and let $n$ satisfy $n\geq \max\{N_{1},N_{2}\}$. Let in turn $N_{1}$ and $N_{2}$ fulfill $N_{i} \geq \varepsilon_{MC}^{-2}\,\varsigma_{i}^{2} s(\mathcal{C})$, $i=1,2$, and $s(\mathcal{C})$ satisfy $\mathcal{C}=\erf (s(\mathcal{C})/\sqrt{2})$, where $\erf(\cdot)$ denotes the error function. Then the estimate
\begin{equation}
\label{apost-equation-epm-confidence} \| u - P_{k}^{L}[u] \|_{L^{2}(\mathcal{D}\times \omega)} \leq  \varepsilon_{\text{{\tiny tol}}} + \Delta^{\text{{\tiny EPM}}} + e_{\text{{\tiny int}}}^{1/2} + \varepsilon_{MC}
\end{equation}
holds true with the confidence level $\mathcal{C}$.
\end{proposition}
\begin{proof}
We apply  the a priori bound \eqref{train_size_fixed} to obtain
\begin{align*}
\| u - P_{k}^{L}[u] \|_{L^{2}(\Omega)} \leq \left(\underset{l = k + 1}{\overset{d(n)}{\sum}} \lambda_{l}^{n}\right)^{1/2}+ e_{\text{{\tiny int}}}^{1/2} + \mathcal{O}_{P}(n^{-1/4}).
\end{align*}
With the definition of $k'$, the estimates in Theorem \ref{apriori-epm}  and by computing $P^{L'}_{k'}[u]$ with Algorithm \ref{adapt-EPM} we get the result
\begin{equation*}
\| u - P_{k}^{L}[u] \|_{L^{2}(\Omega)} \leq  \varepsilon_{\text{{\tiny tol}}} + \| P_{k'}^{L'}[u] - P_{k}^{L}[u] \|_{L^{2}(\Omega)}  + e_{\text{{\tiny int}}}^{1/2} + \mathcal{O}_{P}(n^{-1/4}).
\end{equation*}
Estimate \eqref{apost-equation-epm-confidence} then follows directly from the central limit theorem and Slutsky's theorem (see for instance \cite{EvaSwa00}).
\end{proof}
Note that there might be cases where choosing $k'>k$ results in a situation, in which Algorithm \ref{adapt-EPM} bisects an interval for $k$ but not for $k'$. To ensure that $P^{L'}_{k'}[u]$ yields a better approximation than $P^{L}_{k}[u]$, we require $L'>L$. Note also that the eigenvalues in \eqref{pod_error_estimate} are computed when solving an eigenvalue problem to determine the POD basis.

\subsection{The adaptive EPM based on the EIM instead of the GEIM}\label{sect:EPM_EIM}

If $u \in L^{2}(\mathcal{D},L^{\infty}(\omega))$ is sufficiently regular to allow point evaluations one might  want to consider point evaluations instead of evaluating functionals as the former might be easier to implement within a programming code. To this end, we present in this subsection the changes that have to be made if we employ the EIM as introduced in \cite{BMNP04} instead of the GEIM. We suppose that the considered functions are regular enough to allow for point evaluations, which is for instance satisfied in a discrete setting where we employ a conforming finite element approximation. 

First, for a function $v(\mu,\cdot) \in L^{\infty}(\omega)$ we replace the evaluation by a functional $\sigma_{j}^{I} \in \Sigma^{I}$ as $\sigma_{j}^{I}(v(\mu,\cdot))$ by the point evaluation $v(\mu,t_{j})$, $t_{j} \in I$ for $I \in \mathfrak{I}$. Apart from that no changes are required in Algorithm \ref{eim} and this algorithm becomes the construction of the `magic points' \cite{MNPP09}. Then, we apply the EIM in Line \ref{local GEIM1} and \ref{local GEIM2} in Algorithm \ref{adapt-EPM} to the localized function sets $\mathcal{K}^{I_{left}}$ and $\mathcal{K}^{I_{right}}$, where the latter have been defined in Line \ref{local_POD} of Algorithm \ref{adapt-EPM}. Note that the statements for the GEIM in Lemma \ref{prop-eim} analogously hold true for the EIM. We emphasize that if we do not refine $\omega$ in Algorithm \ref{adapt-EPM}, the latter reduces to the application of the EIM to a POD basis as considered also for instance in \cite{UrbWie2012}. In this paper it has also been demonstrated that this yields the same approximation as the DEIM. \\
Theorem \ref{apriori-epm} remains valid for the adaptive EPM based on the EIM and can be proven analogously as in the previous subsection. We just note that thanks to the assumption $u \in L^{2}(\mathcal{D},L^{\infty}(\omega))$ we have that the eigenfunctions $\kappa_{l}^{n}$ are bounded with respect to the $L^{\infty}$-norm on $\omega$ for all $n \in \mathbb{N}$. Therefore, we may extract a weakly-$*$ converging subsequence in $L^{\infty}(\omega)$ and obtain that the limit eigenfunctions satisfy $\kappa_{l}^{\infty} \in L^{\infty}(\omega)$, $1 \leq l \leq k$. Hence, the selection of the interpolation points with the EIM is well-defined also in the limit $n\rightarrow \infty$, which in turn yields the uniform boundedness of $e_{int}$. One may then proceed as in Proposition \ref{apost-epm} to derive an a posteriori error estimator for the adaptive EPM based on the EIM. We emphasize that by running Algorithm \ref{adapt-EPM} with $N_{\text{{\tiny max}}}^{\text{{\tiny int}}} = 0$ and additionally computing $e^{I}$ in \eqref{local_error} for $I = \omega$, we obtain in this way rigorous a priori and a posteriori bounds for the DEIM \cite{ChaSor10}.\\ 
Regarding the proof of the convergence of the adaptive EPM we note that the Lebesgue constant
$
\tilde{\Lambda}_{k_{I}}^{I} :=  $ $\sup_{g(\mu) \in \mathcal{M}} (\| \tilde{\mathcal{I}}^{I}_{k_{I}}[g](\mu,\cdot)\|_{L^{\infty}(I)}/\| g(\mu,\cdot) \|_{L^{\infty}(I)})
$
can in general not be bounded by the $L^{2}$-based operator norm of the interpolation operator. Here, the $\enspace\tilde{}\enspace$ indicates that the respective quantities are defined for the adaptive EPM based on the EIM. However, if we restrict to a discrete setting an analogous result to Proposition  \ref{apriori-epm-convergence} may be obtained.  To this end we introduce a partition $\tau_{h}$ of ${\omega}$ with elements $\tau_{j} = ({y}_{j-1},{y}_{j})$ of width $h_{j} = {y}_{j} - {y}_{j-1}$ and maximal step size $h:= \max_{\tau_{j}}\, h_{j}$, and a conforming finite element space $Y^{h} \in L^{\infty}(\Omega)$ of dimension $n_{h}$. Then we may exploit the inverse estimate $\|\upsilon^{h}\|_{L^{\infty}({\omega})} \leq  h^{-1/2}  \|\upsilon^{h}\|_{L^{2}({\omega})}$,  $\upsilon^{h} \in Y^{h}$ to obtain
\begin{equation}\label{h-dependent est200}
\sup_{g(\mu) \in \mathcal{M}} \frac{\| \tilde{\mathcal{I}}_{L}[g](\mu,\cdot)\|_{L^{2}(\omega)}}{\| g(\mu,\cdot) \|_{L^{2}(\omega)}} \leq \left(\sum_{I \in \mathfrak{I}}|I| (\tilde{\Lambda}_{k}^{I})^{2}\right)^{1/2}h^{-1/2}. 
\end{equation}
Replacing the estimate in \eqref{est200} by the one in \eqref{h-dependent est200} yields the convergence of the adaptive EPM for a fixed mesh size $h$ for $k\rightarrow n_{h}$ under certain assumptions as stated in the following corollary.
\begin{crllr}[Convergence of the adaptive EPM in the discrete setting]\label{apriori-epm-disc} There holds
\begin{eqnarray}\label{train_size_fixed_disc} 
e_{int}^{1/2} \leq \sqrt{k} \left(1 + \left(\sum_{I \in \mathfrak{I}}|I| (\tilde{\Lambda}_{k}^{I})^{2}\right)^{1/2}h^{-1/2}\right) \left(\underset{l = k + 1}{\overset{d(n)}{\sum}} \tilde{\lambda}^{n}_{l}\right)^{1/2}. 
\end{eqnarray} 
\end{crllr}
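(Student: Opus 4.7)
The plan is to follow the proof of Proposition \ref{apriori-epm-convergence} essentially verbatim, replacing only the $L^2$-based Lebesgue bound in \eqref{est200} by its EIM counterpart, namely the $L^\infty$-based bound already derived in \eqref{h-dependent est200} via the inverse estimate. First I reproduce the initial estimate \eqref{est_app1}: applying $(\kappa_i,\kappa_j)_{L^2(\omega)}=\delta_{ij}$ twice together with Cauchy--Schwarz gives
$$e_{int} \le \frac{k}{n}\sum_{\mu\in\Xi}\|u(\mu,\cdot)-\tilde{\mathcal{I}}_L[u](\mu,\cdot)\|_{L^2(\omega)}^2.$$
I then insert the auxiliary interpolant $\tilde{\mathcal{I}}_L[P_k[u]]$ and split via the triangle inequality into $(i)=\|u-\tilde{\mathcal{I}}_L[P_k[u]]\|_{L^2(\omega)}$ and $(ii)=\|\tilde{\mathcal{I}}_L[P_k[u]-u]\|_{L^2(\omega)}$, using linearity of the local interpolants.

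For term $(i)$, since $P_k[u]|_I\in W^I_{k_I}$ and the exactness property of Lemma \ref{prop-eim}(ii) extends to the EIM variant as noted at the end of Section \ref{sect:EPM_EIM}, the EIM is exact on $W^I_{k_I}$, yielding $\tilde{\mathcal{I}}^{I}_{k_I}[P_k[u]|_I]=P_k[u]|_I$ elementwise and hence $(i)^2=\|u-P_k[u]\|_{L^2(\omega)}^2$, exactly mirroring \eqref{est100}. For term $(ii)$ I apply \eqref{h-dependent est200} to $P_k[u]-u\in Y^h$ (in the discrete setting), giving
$$\|\tilde{\mathcal{I}}_L[P_k[u]-u]\|_{L^2(\omega)}\le\Bigl(\sum_{I\in\mathfrak{I}}|I|(\tilde{\Lambda}^I_k)^2\Bigr)^{1/2}h^{-1/2}\|P_k[u]-u\|_{L^2(\omega)},$$
which is obtained from the elementwise bound $\|\tilde{\mathcal{I}}^I_{k_I}[v]\|_{L^2(I)}\le |I|^{1/2}\tilde{\Lambda}^I_k\|v\|_{L^\infty(I)}$ combined with the global inverse estimate $\|v\|_{L^\infty(\omega)}\le h^{-1/2}\|v\|_{L^2(\omega)}$.

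Adding $(i)$ and $(ii)$ produces the EIM analogue of \eqref{lebesgue1}; squaring, averaging over $\mu\in\Xi$, and invoking the classical POD estimate $(1/n)\sum_{\mu\in\Xi}\|u(\mu,\cdot)-P_k[u](\mu,\cdot)\|_{L^2(\omega)}^2\le\sum_{l=k+1}^{d(n)}\tilde{\lambda}^n_l$ yields \eqref{train_size_fixed_disc}. The only new ingredient compared with Proposition \ref{apriori-epm-convergence} is the inverse estimate, which is precisely where the mesh size $h$ and the discrete setting enter; the bound therefore degrades as $h\to 0$ but remains meaningful for fixed $h$ and $k\to n_h$. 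I do not anticipate a serious obstacle: all building blocks---elementwise exactness on $W^I_{k_I}$, the $L^\infty$-operator-norm interpretation of the Lebesgue constant, the inverse estimate on $Y^h$, and the standard POD bound---are either recalled in the excerpt or standard. The only point requiring a bit of care is that $\tilde{\Lambda}^I_k$ is defined as a supremum over $\mathcal{M}$ whereas it is applied in $(ii)$ to $P_k[u]-u$; this is justified by the linearity of the interpolation operator and mimics the corresponding step in the proof of Proposition \ref{apriori-epm-convergence}.
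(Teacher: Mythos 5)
Your proof is correct and is essentially the paper's own argument: the corollary is obtained precisely by rerunning the proof of Proposition \ref{apriori-epm-convergence} with the estimate \eqref{est200} replaced by the $h$-dependent bound \eqref{h-dependent est200}, which is exactly what you do (exactness of the local EIM on $W^{I}_{k_{I}}$ for term $(i)$, Lebesgue constant plus inverse estimate for term $(ii)$, then the classical POD bound). The slight looseness of applying the Lebesgue-constant bound to $P_{k}[u]-u$ rather than to an element of $\mathcal{M}$ is present in the paper's treatment as well, so no further repair is needed.
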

For the Lebesgue constant $\tilde{\Lambda}_{k_{I}}^{I}$, $I \in \mathfrak{I}$ it can been shown as in \cite{BMNP04,GMNP07} that $\tilde{\Lambda}_{k_{I}}^{I} \leq 2^{k_{I}} - 1$. Although this bound can be actually reached \cite{MNPP09}, $\tilde{\Lambda}_{k}^{I} \leq 2^{k_{I}} - 1$ is a very pessimistic result and in numerical experiments a very moderate behavior is observed (cf.~\cite{MNPP09,GMNP07,DroHaaOhl2012}). Note that \eqref{train_size_fixed_disc} only yields convergence of the EPM if the POD-error converges faster than $\sqrt{k} (1 + (\sum_{I \in \mathfrak{I}} |I|(\tilde{\Lambda}_{k}^{I})^{2})^{1/2}h^{-1/2}))^{-1}$. We emphasize the dependence on $h^{-1/2}$ in \eqref{train_size_fixed_disc}. Therefore using the EIM  
within the adaptive EPM seems reasonable for moderate mesh sizes, whereas for $h\rightarrow 0$ we should rely on the GEIM. \\
Note that theoretically also the a posteriori bound for the EIM derived in \cite{EfGrPa10} can be employed to obtain an a posteriori estimate for the adaptive EPM. As the theory developed in \cite{EfGrPa10} however requires that the considered functions are parametrically smooth, it is not applicable within our context. 

\section[HMR for nonlinear PDEs]{Hierarchical Model Reduction for nonlinear PDEs}\label{sect-hmr-nonlin}

The goal of this section is the efficient construction of a low-dimensional reduction space and a collateral basis space, which yield a fast convergence of the RB-HMR approximation to the full solution. We recall that the reduction space is used to define the reduced space in which we search our reduced RB-HMR solution. In contrast the collateral basis space is constructed for the approximation of the range of nonlinear operator and therefore facilitates the evaluation of the nonlinear term at low cost. Following the approach in \cite{OS10}, we derive in \S \ref{1dproblem-nonlin} a parametrized nonlinear 1D PDE whose solution is employed for the definition of parametrized 1D operator evaluations in the transverse direction in \S \ref{gen-snap-nonlin}. The sets of solution and operator snapshots are generated simultaneously by an adaptive training set extension algorithm in \S \ref{adapt-RB-HMR-epm}. The principal components of the snapshot sets then form the reduction space and the collateral basis space. We begin with formulating the RB-HMR approach with the adaptive EPM in \S \ref{formulate_hmrrb_epm}.

\subsection{Formulation of the reduced problem in the RB-HMR framework employing the EPM} \label{formulate_hmrrb_epm}\label{hmrrb-epm}

We follow the hierarchical model reduction (HMR) framework introduced in \cite{PerErnVen10,ErnPerVen2008} and extended to the RB-HMR setting in \cite{OS10}.
We recall our assumption that the considered domain is a tensor product, i.e. $\Omega = \Omega_{1D} \times \omega$, where $\Omega_{1D} = (x_{0},x_{1})$ denotes the computational domain in the dominant direction, and ${\omega}= (y_{0},y_{1})$ the domain in the transverse direction. In more general situations a mapping to such a reference domain needs to be employed (cf.~\cite{OS10}). For $A: H^{1}_{0}(\Omega) \rightarrow H^{-1}(\Omega)$ and $f\in H^{-1}(\Omega)$ we consider the nonlinear problem
\begin{equation}\label{fullprob_nonlin2}
\text{Find} \enspace p \in H^{1}_{0}(\Omega): \quad \langle F(p), v \rangle  = 0 \quad \forall v \in H^{1}_{0}(\Omega), \qquad \text{where} \quad \langle F(p), v \rangle:= \langle A(p), v \rangle  - \langle f, v \rangle.
\end{equation}
Problem \eqref{fullprob_nonlin2} is denoted the full problem, and existence and  uniqueness of a solution $p \in H^{1}_{0}(\Omega)$ of \eqref{fullprob_nonlin2} is assumed. Following the HMR framework, we introduce a set of $L^{2}$-orthonormal basis functions $\{\phi_{k}\}_{k \in \mathbb{N}} \in H^{1}_{0}({\omega})$. At this point, we assume that the basis functions $\{\phi_{k}\}_{k \in \mathbb{N}}$ are given to us. Possible choices are trigonometric or boundary-adapted Legendre polynomials \cite{PerErnVen10} or a posteriori determined basis functions, whose construction will be detailed in this section.  We combine the reduction space $Y_{m}:= \spanlin \{\phi_{1},...,\phi_{m}\}$ with $H^{1}_{0}(\Omega_{1D})$ and define the reduced space
\begin{equation}\label{V_m_nonlin}
V_{m} = \left\{ v_{m}(x,y) = \underset{k = 1}{\overset{m}{\sum}}\, \overline{v}_{k}(x)\, \phi_{k}(y),\enspace \mbox{with} \enspace \overline{v}_{k}(x) \in H^{1}_{0}(\Omega_{1D}),\, x \in \Omega_{1D}, \, y \in \omega\, \right \},
\end{equation}
where $
\overline{v}_{k}(x) = \int_{{\omega}}  v_{m}(x,y)\,\phi_{k}({y})\, d{y},  k = 1,...,m.
$ 
The reduced solution $p_m \in V_{m}$ may then be obtained by Galerkin projection, i.e.
\begin{equation} \label{reprob_nonlin}
\text{Find} \enspace p_{m} \in V_{m}: \quad \langle F(p_{m}), v_{m} \rangle  = 0 \quad \forall v_{m} \in V_{m}.
\end{equation}
Based on this reduced problem, fully discrete reduced approximations can be derived by replacing $H^{1}_{0}(\Omega_{1D})$ in the definition of $V_{m}$ by some suitable one-dimensional finite element subspace.\\
We emphasize that in contrast to the case of linear PDEs \cite{PerErnVen10,ErnPerVen2008,OS10}, the integrals in the transverse direction in \eqref{reprob_nonlin} cannot be precomputed due to the nonlinear operator $A$. This implies that \eqref{reprob_nonlin} is still of full dimension. To overcome this difficulty and hence perform a dimensional reduction of \eqref{reprob_nonlin} we apply the adaptive EPM introduced in \S \ref{epm}.  \\

We suppose that a set of collateral basis functions $\{\kappa_{n}\}_{n=1}^{k}$ is given to us. The reduced problem based on the adaptive EPM then reads
\begin{align} \label{reprob_nonlin_epm}
\nonumber\text{Find} \enspace p_{m,k} \in V_{m}: \quad \langle P_{k}^{L}[A(p_{m,k})], v_{m} \rangle  &= \langle f, v_{m} \rangle \quad \forall v_{m} \in V_{m}, \qquad \text{where} \\
 P_{k}^{L}[A(p_{m,k})](x,y) &= \sum_{n = 1}^{k} \int_{{\omega}} \mathcal{I}_{L}[A(p_{m,k})](x,z)\, \kappa_{n}({z})\,  d{z}\,\, \kappa_{n}(y)\\
\nonumber &=\sum_{n = 1}^{k} \sum_{I \in \mathfrak{I}} \sum_{j=1}^{k_{I}}\int_{{\omega}} \sigma^{I}_{j}(A(p_{m,k}(x,\cdot)))\,\vartheta_{j}^{I}({z})\, \kappa_{n}({z})\,  d{z}\,\, \kappa_{n}(y).
\end{align}
Note that for some nonlinear operators it might be necessary to apply the adaptive EPM component-wise, exploiting that for any $z \in H^{1}_{0}(\Omega)$ there exist functions $u_{1},u_{2} \in L^{2}(\Omega)$ such that
\begin{equation}\label{eq:h-1rep}
\langle A(z), v\rangle = \int_{\Omega} u_{1}\, \partial_{x} \, v + u_{2}\, \partial_{y} \, v\, dx\, dy\quad \text{for all} \enspace v\in H^{1}_{0}(\Omega) \enspace \text{(cf.~\cite{Evans1998}, p.~283)}.
\end{equation}
Note also, that for the major part of problems which fall in the category of \eqref{nonlinear_pde}, we expect that for $A(z) \in H^{-1}(\Omega)$, $z \in H^{1}_{0}(\Omega)$ we actually have $A(z) \in L^{2}(\Omega)$ thanks to the lemma of J.~L. Lions, which states that for distributions $v$ on $\Omega$ which are in $H^{-1}(\Omega)$ and whose all partial derivatives are in $H^{-1}(\Omega)$ there holds $v \in L^{2}(\Omega)$ (see \cite{Cia97} and references therein). To simplify notations we do not introduce a separate notion for the cases where the adaptive EPM has to be applied component-wise but instead assume that such cases are covered by the formulation in \eqref{reprob_nonlin_epm}. \\

Rewriting $p_{m,k}$ as $p_{m,k}(x,y) = \sum_{s = 1}^{m}\, \overline{p}_{s,k}(x)\, \phi_{s}(y)$ we obtain: Find $\overline{p}_{s,k} \in H^{1}_{0}(\Omega_{1D}), s = 1,...,m$, such that 
\begin{eqnarray}\label{reduced_problem_nonlin_epm}
 \underset{n=1}{\overset{k}{\sum}} \sum_{I \in \mathfrak{I}} \sum_{j=1}^{k_{I}} \left\langle \sigma_{j}^{I}\left(A\left(\sum_{s = 1}^{m}\overline{p}_{s,k} \phi_{s}\right)\right) \int_{{\omega}} \vartheta_{j}^{I}({y}) \, \kappa_{n}({y}) \, d{y}\, \kappa_{n} , \xi \phi_{j}\right \rangle= \langle f, \xi \phi_{j} \rangle 
  \enspace  \forall\, \xi \in H^{1}_{0}(\Omega_{1D}) \enspace \mbox{and} \enspace j = 1,...,m.
\end{eqnarray}
To compute an approximation of $p_{m,k}$ we introduce a partition $\mathcal{T}_{H}$ of $\Omega_{1D}$ with elements $\mathcal{T}_{i} = (x_{i-1},x_{i})$ of width $H_{i} = {x}_{i} - {x}_{i-1}$ and maximal step size $H:= \max_{i}\, H_{i}$. Moreover, we introduce a conforming finite element space $X^{H} \subset H^{1}_{0}(\Omega_{1D})$ of dimension $N^{H} < \infty$ and basis $\xi^{H}_{i}$, $i=1,...,N^{H}$. Then the corresponding discrete reduced problem reads: Find $\overline{p}_{s,k}^H \in X^H$, $s = 1,...,m$, such that
\begin{eqnarray}\label{red_prob_hmr_epm}
 \underset{n=1}{\overset{k}{\sum}} \sum_{I \in \mathfrak{I}} \sum_{j=1}^{k_{I}} \left\langle \sigma_{j}^{I}\left(A\left(\sum_{s = 1}^{m}\overline{p}_{s,k}^H \phi_{s}\right)\right) \int_{{\omega}} \vartheta_{j}^{I}({y}) \, \kappa_{n}({y}) \, d{y}\, \kappa_{n} , \xi^H_{i} \phi_{j}\right \rangle= \langle f, \xi^H_{i} \phi_{j} \rangle,
\end{eqnarray}
for $i = 1,...,N_{H}$, $j=1,...,m$, which is equivalent to the short notation
\begin{eqnarray}\label{red_prob_hmr_epm2}
\text{Find} \enspace p_{m,k}^{H} \in V_{m}^{H}: \quad \langle P_{k}^{L}[F(p_{m,k}^{H})], \xi^H_{i} \phi_{j} \rangle  = 0 \quad \text{for} \enspace i = 1,...,N_{H} \enspace \mbox{and} \enspace j = 1,...,m, 
\end{eqnarray}
where $\langle P_{k}^{L}[F(p_{m,k}^{H})], \xi^H_{i} \phi_{j} \rangle  = \langle P_{k}^{L}[A(p_{m,k}^{H})], \xi^H_{i} \phi_{j} \rangle - \langle f, \xi^H_{i} \phi_{j} \rangle$, $i = 1,...,N_{H}$, $j = 1,...,m$.
We emphasize that thanks to the application of the adaptive EPM we can now precompute the integrals in
the transverse direction in \eqref{reduced_problem_nonlin_epm} and \eqref{red_prob_hmr_epm} and as a result the computation of $\bar{p}_{s,k}$ and $\bar{p}_{s,k}^{H}$ reduces to the solution of a coupled system of nonlinear one-dimensional PDEs of size $m$ \eqref{reduced_problem_nonlin_epm} or $m \cdot N_{H}$ \eqref{red_prob_hmr_epm}.\\

Problem \eqref{red_prob_hmr_epm} can be efficiently solved by Newton's method. It is possible to reuse the collateral basis for a nonlinear operator also for the approximation of its Fr\'{e}chet derivative \cite{DroHaaOhl2012}. To obtain a better approximation of $A^{\prime}(p^{H}_{m,k}(x,{y}))$ and thus ideally a faster convergence of the Newton scheme solving for $p_{m,k}^{H}$, we propose to use a second collateral basis space $W_{f,k_{f}}:=\spanlin \{\kappa_{1}^{f},...,\kappa^{f}_{k_{f}}\}$ for this approximation. 
Assuming that $W_{f,k_{f}}$ is given, the Newton scheme is defined as follows: 
\begin{eqnarray}
\nonumber \langle P_{k_{f}}^{L_{f}}[F^{\prime}((p_{m,k}^{H})^{j})]\,  \delta (p_{m,k}^{H})^{j}, v_{m}^{H} \rangle &=& - \langle P_{k}^{L}[F((p_{m,k}^{H})^{j})], v_{m}^{H} \rangle \quad  \forall v_{m}^{H} \in V_{m}^{H}, \quad j = 0,1,2,... \\[-1.5ex]
\label{newton_epm}&&\\[-1.5ex]
 \nonumber  \qquad (p_{m,k}^{H})^{j+1} &=& (p_{m,k}^{H})^{j} + \delta (p_{m,k}^{H})^{j},
\end{eqnarray}
where $(p_{m,k}^{H})^{0}$ is a suitable initial datum and $P_{k_{f}}^{L_{f}}[F^{\prime}(p_{m,k}^{H})]$ is computed analogous to $P_{k}^{L}[F(p_{m,k}^{H})]$ with the adaptive EPM. For well-posedness of the Newton scheme for nonlinear PDEs in general we refer to \cite{DenSch96} and for this particular framework to \cite{Sme13}. \\

For future reference we finally introduce a two-dimensional finite element solution which will serve as a reference for our approximation. To this end we introduce the subdivision ${T} := \mathcal{T}_{H} \times \tau_{h}$ of ${\Omega}$ with elements $T_{i,j} := \mathcal{T}_{i} \times \tau_{j}$, $\mathcal{T}_{i} \in \mathcal{T}_{H}$ and $\tau_{j} \in \tau_{h}$, and the reference  FE-space
\begin{equation}\label{tensor-FEM-nonlin}
V^{H\times h} := \left\{ v^{H \times h} \in C^{0}({\Omega}) \, \mid \, v^{H \times h}|_{T_{i,j}} \in \mathbb{Q}_{k,l}, T_{i,j} \in {T} \right\}.
\end{equation}
Here, $\mathbb{Q}_{k,l}$ is defined as $
\mathbb{Q}_{k,l}:= \{ \sum_{j} c_{j} v_{j}(x)w_{j}(y) \enspace : \enspace v_{j} \in \mathbb{P}^{1}_{k}, w_{j} \in \mathbb{P}^{1}_{l}  \},
$ and $\mathbb{P}^{1}_{l}$ denotes the space of polynomials of order $\leq l$ in one variable. 
We will see in Section \ref{1dproblem-nonlin} and \ref{adapt-RB-HMR-epm} that we have for the RB-HMR approach $Y_{m} \subset Y^{h}$ and as a consequence $V_{m}^{H} \subset V^{H\times h}$.  The reference FE approximation of problem \eqref{fullprob_nonlin2} reads:
\begin{equation}
 \label{truth_nonlin}
\text{Find} \enspace p^{H \times h} \in V^{H\times h}: \quad \la F(p^{H \times h}),v^{H \times h} \ra = 0 \quad \forall \, v^{H \times h} \in V^{H \times h},
\end{equation}
where $\la F(p^{H \times h}),v^{H \times h} \ra = \la A(p^{H \times h}),v^{H \times h} \ra - \la f, v^{H \times h} \ra$ for all $v^{H\times h} \in V^{H\times h}$.

\subsection{Derivation of a parametrized 1D problem in transverse direction}\label{1dproblem-nonlin}
To derive a lower dimensional parametrized PDE in the transverse direction we proceed as in \cite{OS10} and assume that
\begin{equation}\label{one tensor}
p(x,{y}) \approx U(x) \cdot \mathcal{P}({y}),
\end{equation}
where the function $U(x)$ represents the unknown behavior of the full solution in the dominant direction. Using the test functions $v(x,y) = U(x) \cdot \upsilon({y})$ for all $\upsilon \in H^{1}_{0}({\omega})$ yields the reduced problem with quadrature
\begin{equation}\label{1D_prob_quad_nonlin}
\text{Given any} \enspace U \in H^{2}_{0}(\Omega_{1D}), \enspace \text{find} \enspace  \mathcal{P} \in H^{1}_{0}({\omega}): \quad \langle A(U \mathcal{P}), U \upsilon \rangle^{q} = \langle f, U \upsilon \rangle^{q} \enspace \forall \upsilon \in H^{1}_{0}({\omega}).
\end{equation}
Here, we denote by $\langle \cdot , \cdot \rangle^{q}$ the approximation  obtained by substituting the integral $I(t):= \int_{{\omega}} \int_{\Omega_{1D}} t(x,{y})\,  dx d{y}$ in $\langle \cdot , \cdot \rangle$ by the quadrature formula
\begin{equation}\label{quad_formula_nonlin}
Q(t) := \sum_{l = 1}^{Q} \alpha_{l} \int_{{\omega}} t(x_{l}^{q},{y}) \, \, d{y}, 
\end{equation}
where $\alpha_{l}$, $x^{q}_{l}$, $l = 1,...,Q$ denote quadrature weights and points respectively.
Note that we require $U \in H^{2}_{0}(\Omega_{1D})$ to facilitate point evaluations of $U$ and thus obtain the well-definedness of \eqref{1D_prob_quad_nonlin}. Note also that this assumption is reasonable in the sense that in many cases we have indeed that the solution $p$ belongs to a Sobolev space of higher order (see for instance \cite{CalRap1997}). 
To include the unknown dynamics in dominant direction $U$ in the lower-dimensional problem in transverse direction and to find optimal locations of the quadrature points  with RB methods (see \S \ref{adapt-RB-HMR-epm} below), we parametrize \eqref{1D_prob_quad_nonlin} by 
introducing a parameter vector $\mu = \left(x_{l}^{q}, U(x_{l}^{q}),\partial_{x} U(x_{l}^{q})\right)_{l = 1,...,Q}$.
The $P$-dimensional parameter space $\mathcal{D}$ containing all admissible parameter values of $\mu$, is defined as $\mathcal{D} := [\Omega_{1D} \times I_{0} \times I_{1} ]^{Q}$, where the intervals $I_{k} \subset \mathbb{R}$ contain the ranges of $\partial^{k}_{x}U(x)$, $k = 0,1$. Compared to the linear setting, we expect a greater sensitivity of the RB-HMR approach with respect to the choice of the intervals $I_{k} \subset \mathbb{R}$, $k=0,1$, as the nonlinearity of $A$ also applies to the parameter via the term $A(U \mathcal{P})$. This can indeed be observed in the numerical experiments provided in \S \ref{numerics_nonlin}. To get a rough estimate on the possible ranges of $\partial^{k}_{x}U(x)$, $k = 0,1$, and therefore obtain an optimal convergence rate of the RB-HMR approach, we may for instance compute a coarse approximation of the solution $p$ of \eqref{fullprob_nonlin2}. 
Using the definition of $\mu$, problem (\ref{1D_prob_quad_nonlin}) can be recast into a parametrized 1D nonlinear PDE in  transverse direction as follows:
\begin{equation}\label{1D_prob_quad_para_nonlin}
\text{Given any} \enspace \mu \in \mathcal{D}, \enspace \text{find} \enspace  \mathcal{P}(\mu) \in H^{1}_{0}({\omega}): \la A(\mathcal{P}(\mu);\mu),\upsilon;\mu\ra^{q} = \la f(\mu), \upsilon;\mu\ra^{q} \enspace \forall \upsilon \in H^{1}_{0}({\omega}).
\end{equation}
Here, $\la \cdot,\cdot ;\mu \ra^{q}$ denotes the parameter dependent dual pairing of $H^{-1}(\omega)$ and $H^{1}_{0}(\omega)$, $A(\cdot ;\mu): H^{1}_{0}(\omega) \rightarrow H^{-1}(\omega)$, and $f(\mu) \in H^{-1}(\omega)$.
Possible choices for the quadrature formula \eqref{quad_formula_nonlin} are a modified rectangle formula or a standard composite trapezoidal rule. The number of quadrature points is chosen automatically by an adaptive algorithm, described in \S \ref{adapt-RB-HMR-epm}. To compute snapshots we use the subdivision $\tau_{h}$ of ${\omega}$ and the associated conforming FE space $Y^{h} \subset H^{1}_{0}({\omega})$ with basis $\upsilon^{h}_{j}, \, j=1,...,n_{h}$ as introduced in Section \ref{sect:EPM_EIM}. We obtain the parameter dependent discrete 1D problem:
\begin{equation}\label{disc_1dprob_nonlin}
\text{Given any} \enspace \mu \in \mathcal{D}, \enspace \text{find} \enspace \mathcal{P}^{h}(\mu) \in Y^{h}: \la A(\mathcal{P}^{h}(\mu);\mu),\upsilon^{h}_{j};\mu\ra^{q} = \la f(\mu), \upsilon^{h}_{j};\mu\ra^{q} \enspace \text{for} \enspace j = 1,...,n_{h},
\end{equation}
which can be solved by Newton's method. Well-posedness of \eqref{disc_1dprob_nonlin} and the conditions for the convergence of Newton's method may be verified a posteriori (cf.~\cite{CaToUr09,Sme13}). We may then define the solution manifold 
$\mathcal{M}^{\mathcal{P}}$ as
\begin{equation}\label{manifold-nonlin}
\mathcal{M}^{\mathcal{P}}:=\lbrace \mathcal{P}^{h}(\mu) \,|\, \mu \in   \mathcal{D}\rbrace .
\end{equation}
Finally, we remark that instead of the heuristic assumption in \eqref{one tensor} one might alternatively consider a linear combination of tensor products. Note that we would then have to consider a system of nonlinear equations in \eqref{1D_prob_quad_para_nonlin} and that the number of parameters would of course increase. How this change affects the approximation properties of the RB-HMR approach is subject of future research.

\subsection{The generation of  parametrized 1D operator evaluations}\label{gen-snap-nonlin}

In this subsection we define a manifold of operator evaluations which is formed by parametrized 1D operator evaluations of the nonlinear operator $A$ in the transverse direction. For this purpose we consider \eqref{1D_prob_quad_para_nonlin} and \eqref{disc_1dprob_nonlin} and define parametrized 1D operator evaluations $\mathcal{A}(\mu)$ and $\mathcal{A}^{h}(\mu)$ of the operators $A(p(x,{y}))$ and $A(p^{H\times h}(x,{y}))$ as 
\begin{equation}\label{op-snap-dis}
\mathcal{A}(\mu) := \sum_{l=1}^{Q} \frac{\alpha_{l}}{|\Omega_{1D}|} A(\mathcal{P}(\mu)  ;\mu_{l}), \quad \text{and} \quad \mathcal{A}^{h}(\mu) := \sum_{l=1}^{Q} \frac{\alpha_{l}}{|\Omega_{1D}|} A(\mathcal{P}^{h}(\mu)  ;\mu_{l}).
\end{equation}
Here, $|\Omega_{1D}|$ denotes the length of the interval $\Omega_{1D}$, $\mu_{l}:= (x_{l}^{q}, U(x_{l}^{q}),\partial_{x}^{k} U(x_{l}^{q}))$, $\mathcal{P}(\mu) $ is the solution of \eqref{1D_prob_quad_para_nonlin} and $\mathcal{P}^{h}(\mu) $ solves \eqref{disc_1dprob_nonlin}.  Provided that $\mathcal{P}(\mu) $ is able to capture the behavior of the full solution $p$ in the transverse direction, we expect that $\mathcal{A}(\mu)$ is a good approximation of the range of $A(p(x,{y}))$ in that direction, which will be validated in \S \ref{numerics_nonlin}. 
Moreover, we define parametrized 1D operator evaluations of the respective Fr$\acute{\rm e}$chet derivatives $A^{\prime}(p(x,{y}))$ and $A^{\prime}(p^{H\times h}(x,{y}))$ as 
\begin{equation}\label{op-snap-dis-fre}
\mathcal{A}^{\prime}(\mu) := \sum_{l=1}^{Q} \frac{\alpha_{l}}{|\Omega_{1D}|} A^{\prime}(\mathcal{P}(\mu)  ;\mu_{l}), \quad 
\text{and} \quad (\mathcal{A}^{h})^{\prime}(\mu) := \sum_{l=1}^{Q} \frac{\alpha_{l}}{|\Omega_{1D}|} A^{\prime}(\mathcal{P}^{h}(\mu)  ;\mu_{l}).
\end{equation}
Finally, we define a manifold of operator evaluations $\mathcal{M}^{\mathcal{A}}$ through 
\begin{equation}\label{operator-manifold}
\mathcal{M}^{\mathcal{A}}:=\lbrace \mathcal{A}^{h}(\mu) \,|\, \mu \in   \mathcal{D}\rbrace .
\end{equation}

\subsection{Reduced and collateral basis generation --- the \textsc{Adaptive-RB-HMR} algorithm}\label{adapt-RB-HMR-epm}
In this subsection we introduce the \textsc{Adaptive-RB-HMR} algorithm which simultaneously
constructs the reduction space $Y_{m}=\spanlin \{\phi_{1}, \dots, \phi_{m}\} \subset Y^{h}$ and the collateral basis space $W_{k}=\spanlin \{\kappa_{1},\dots,\kappa_{k}\}$ using sampling strategies from the RB framework. First, the snapshot sets
\begin{equation}\label{disc_mani}
\mathcal{M}^{\mathcal{P}}_{\Xi} := \lbrace \mathcal{P}^{h}(\mu)\, |\, \mu \in  \Xi \rbrace \subset \mathcal{M}^{\mathcal{P}}, \quad \text{and} \quad \mathcal{M}^{\mathcal{A}}_{\Xi}  := \lbrace \mathcal{A}^{h}(\mu) \,|\, \mu \in  \Xi \rbrace \subset \mathcal{M}^{\mathcal{A}}, \enspace \Xi \subset \mathcal{D},  
\end{equation}
are efficiently constructed in Algorithm \ref{adapt-train} by an adaptive training set extension which generalizes the algorithm proposed in \cite{OS10}. Subsequently, we apply a POD to determine the principal components of $\mathcal{M}^{\mathcal{P}}_{\Xi}$ and $\mathcal{M}^{\mathcal{A}}_{\Xi}$ which in turn span the reduction space $Y_{m}$ and the collateral basis space $W_{k}$, respectively. \\

\paragraph*{Algorithm \ref{adapt-train} (\textsc{AdaptiveTrainExtension})}
Let $G$ denote a hyper-rectangular possibly non-conforming grid in the parameter space $\mathcal{D}$, $g$ a cell of $G$ and $N_{G}$ the number of cells in $G$. The parameter values in the training set $\Xi_{g}$ are sampled from the uniform distribution over the cell $g$, where $\Xi_{g}$ has the same size $n_{\Xi}$ for all cells $g$ and $\Xi_{G} = \cup_{g \in G} \Xi_{g}$. As in  \cite{OS10} and originally in  \cite{HaaOhl2008b,HaaDihOhl2011} we use a local mesh adaptation with a
$\mbox{SOLVE} \rightarrow \mbox{ESTIMATE} \rightarrow \mbox{MARK} \rightarrow \mbox{REFINE}$
strategy for the generation of $G$ and $\Xi_{G}$ beginning with a given coarse partition $G_{0}$ and an associated initial training set $\Xi_{G_{0}}$. In \S \ref{sect-apostest} we derive an a posteriori error estimate $\Delta_{m}^{k}$ for the error between the solution $p_{m,k}^{H'}$ of \eqref{red_prob_hmr_epm} and the reference solution $p^{H'\times h}$ defined in \eqref{truth_nonlin} which takes into account both the model error and the error due to the approximation of the nonlinear operator. For the latter we use the a posteriori bound for the EPM derived in Proposition \ref{apost-epm}. A richer collateral basis space $W_{k'}$ with associated interpolating functionals $\mathcal{S}_{k'}^{\mathfrak{I}}$ has thus to be provided before starting the $\mbox{SOLVE} \rightarrow \mbox{ESTIMATE} \rightarrow \mbox{MARK} \rightarrow \mbox{REFINE}$-loop. Therefore, we initially compute the snapshots $\mathcal{P}^{h}_{c}$ and $\mathcal{A}^{h}_{c}$ for a coarse train sample $\Xi_{c}$ of $G_{0}$ with $|\Xi_{c}| = N_{G_{0}}n_{c}$ in line \ref{coarse_snapshots} in Algorithm \ref{adapt-train}. To compute $W_{k'}$ and the functionals $\mathcal{S}_{k'}^{\mathfrak{I}}$ with Algorithm \ref{k_strich} \textsc{EPM-Indicator}, we first use a POD  to find the principal components $\{\kappa_{l}\}_{l=1}^{k_{POD}}$ such that the POD-error $e^{\mbox{{\tiny POD}}}_{k_{POD}}=(\sum_{l = k_{POD} + 1}^{N_{G_{0}}n_{c}} \lambda_{l})^{1/2}  \leq \varepsilon_{\mbox{{\scriptsize tol}}}^{err}$, where $\varepsilon_{\mbox{{\scriptsize tol}}}^{err} < \varepsilon_{\mbox{{\tiny tol}}}^{\text{{\tiny EPM}}}$. Note that $\varepsilon_{\mbox{{\scriptsize tol}}}^{err}$ has to be chosen rather small to obtain an a posteriori error estimate for the EPM which is as accurate as possible. Next, we apply Algorithm \ref{adapt-EPM} \textsc{adaptive EPM} for the computation of the interpolating functionals $\mathcal{S}_{k_{POD}}^{\mathfrak{I}}$, the basis $\mathcal{Q}_{k_{POD}}^{\mathfrak{I}}$ and the matrix $B_{k_{POD}}^{\mathfrak{I}}$, where the computation of the interpolant in \eqref{epm-approx} necessitates the solution of \eqref{red_prob_hmr_epm} and thus the computation of $\{\phi_{l}\}_{l=1}^{m_{\mbox{{\scriptsize max}}}}$ in line \ref{basis_line}. As the error bound is only employed during the adaptive training set extension, it is sufficient to restrict to $m = m_{\mbox{{\scriptsize max}}}$. Then we use the a priori bound for the EPM from Theorem \ref{apriori-epm} to compute $k'$, which yields $W_{k'}$ and apply again Algorithm \ref{adapt-EPM} to determine $\mathcal{S}_{k'}^{\mathfrak{I}}$. Here, $\varepsilon^{c}_{\text{tol}}$ denotes the tolerance for the POD employed to compute the collateral basis of size $k_{c}$ within Algorithm \ref{adapt-train}. The factor $\text{tol}_{k'}$ results in a smaller tolerance $\text{tol}_{k'}\cdot \varepsilon_{\mbox{{\scriptsize tol}}}^{c}$ for the POD which is used to compute the collateral basis of size $k'$ solely for error estimator purposes and ensures $k'>k_{c}$. $W_{k'}$ and $\mathcal{S}_{k'}^{\mathfrak{I}}$ are updated at the end of each loop over $m$ in line \ref{update_kstrich} to include the information from the snapshots generated during lines \ref{snap1} and \ref{snap2}. \\
\begin{algorithm}[t]
\LinesNotNumbered
\caption{Computation of $k'$ \label{k_strich}}
\textsc{EPM-Indicator}$(\mathcal{P}^{h}_{G},\mathcal{A}_{G}^{h}, \varepsilon_{\mbox{{\scriptsize tol}}}^{err},\text{tol}_{k'}, \varepsilon_{\mbox{{\scriptsize tol}}}^{c}, \varepsilon_{\mbox{{\tiny tol}}}^{\mbox{{\tiny int}}}$, $N_{\mbox{{\tiny max}}}^{\mbox{{\tiny int}}}, m_{\mbox{{\scriptsize max}}},N_{H'}, \Xi_{G}, \Sigma)$\\
$\{\phi_{l}\}^{m_{\mbox{{\scriptsize max}}}}_{l=1}:= \mbox{POD}(\mathcal{P}_{G}^{h}, m_{\mbox{{\scriptsize max}}})$\label{basis_line}\\ 
$[\{\kappa_{l}\}^{k_{POD}}_{l=1},\{\lambda_{l}\}^{k_{POD}}_{l=1}]:= \mbox{POD}(\mathcal{A}_{G}^{h}, \varepsilon_{\mbox{{\scriptsize tol}}}^{err})$\\
$[\mathcal{S}_{k_{POD}}^{\mathfrak{I}}, \mathcal{Q}_{k_{POD}}^{\mathfrak{I}}, B_{k_{POD}}^{\mathfrak{I}}, e_{\mbox{{\tiny int}}}] :=$\textsc{adaptive EPM}$(\{\kappa_{l}\}^{k_{POD}}_{l=1},\Sigma, \mathcal{A}_{G}^{h}$,$\varepsilon_{\mbox{{\tiny tol}}}^{\mbox{{\tiny int}}}$, $N_{\mbox{{\tiny max}}}^{\mbox{{\tiny int}}}$, $\Xi_{G}$, $ \{\phi_{l}\}^{m_{POD}}_{l=1},N_{H'})$\\
$k' =$\textsc{EPM-Aposteriori-Bound}$(e_{\mbox{{\tiny int}}},\text{tol}_{k'}\cdot \varepsilon_{\mbox{{\scriptsize tol}}}^{c},\{\lambda_{l}\}^{k_{POD}}_{l=1})$\\
$W_{k'}:= \{\kappa_{l}\}^{k'}_{l=1}$\\
$\mathcal{S}_{k'}^{\mathfrak{I}} :=$\textsc{adaptive EPM}$(\{\kappa_{l}\}^{k'}_{l=1}$, $\Sigma$, $\mathcal{A}_{G}^{h}$, $\varepsilon_{\mbox{{\tiny tol}}}^{\mbox{{\tiny int}}}$, $N_{\mbox{{\tiny max}}}^{\mbox{{\tiny int}}}$, $\Xi_{G}$, $\{\phi_{l}\}^{m_{POD}}_{l=1},N_{H'})$\\
 \Return $W_{k'}, \mathcal{S}_{k'}^{\mathfrak{I}}$
\end{algorithm}

\begin{algorithm}[t]
\LinesNotNumbered
\caption{Adaptive training set extension and snapshot generation \label{adapt-train}}
\textsc{AdaptiveTrainExtension}$(G_{0},\Xi_{G_{0}},\Xi_{c}, m_{\mbox{{\scriptsize{max}}}},i_{max},n_{\Xi},\theta,\sigma_{thres},N_{H'},...$\\ \hspace{200pt}$...\varepsilon_{\mbox{{\scriptsize tol}}}^{err}, \varepsilon_{\mbox{{\scriptsize tol}}}^{c}, \text{tol}_{k'}, \varepsilon_{\mbox{{\tiny tol}}}^{\mbox{{\tiny int}}}, N_{\mbox{{\tiny max}}}^{\mbox{{\tiny int}}},Q_{0},Q_{\mbox{{\scriptsize{max}}}}, \Sigma)$\\
\textbf{Initialize}  $G = G_{0}, \Xi_{G} = \Xi_{G_{0}}, \phi_{0} = \emptyset, \kappa_{0} = \emptyset, \rho(G) = 0, Q = Q_{0}$\\
Compute $\mathcal{P}_{c}^{h}(Q)$, $\mathcal{A}_{c}^{h}(Q)$ \label{coarse_snapshots}\\
$[W_{k'}, \mathcal{S}_{k'}^{\mathfrak{I}}]=$\textsc{EPM-Indicator}$(\mathcal{P}^{h}_{c},\mathcal{A}_{c}^{h}, \varepsilon_{\mbox{{\scriptsize tol}}}^{err},\text{tol}_{k'}, \varepsilon_{\mbox{{\scriptsize tol}}}^{c}, \varepsilon_{\mbox{{\tiny tol}}}^{\mbox{{\tiny int}}}$, $N_{\mbox{{\tiny max}}}^{\mbox{{\tiny int}}}, m_{\mbox{{\scriptsize max}}},N_{H'},\Xi_{c}, \Sigma)$\\
$Q = $\textsc{QP-Indicator}$(\mathcal{P}_{c}^{h},N_{H'},W_{k'}, \mathcal{S}_{k'}^{\mathfrak{I}},Q_{\text{{\tiny max}}})$\label{qp1}\\
Possibly adapt $G$ and $\Xi_{G}$ if $Q$ has changed.\\
\For{$m=1,\dots,m_{\mbox{{\scriptsize max}}}$}{
Compute $\mathcal{P}_{G}^{h}(Q)$, $\mathcal{A}_{G}^{h}(Q)$\label{snap1}\\
$[\eta(G), \sigma(G)]=$\textsc{ElementIndicators}$(\{\phi_{k}\}^{m-1}_{k=1},\mathcal{P}_{G}^{h},\{\kappa_{k}\}^{k_{c}}_{k=1},\mathcal{A}_{G}^{h},W_{k'},\mathcal{S}_{k'}^{\mathfrak{I}},G,\rho(G),N_{H'})$\\
\For{$i = 1,\dots,i_{max}$}{
$\mathcal{G} :=$ \textsc{Mark}$(\eta(G),\sigma(G),\theta,\sigma_{thres})$\\
$(G,\Xi_{G}):=$ \textsc{Refine}$(\mathcal{G},\Xi_{\mathcal{G}},n_{\Xi})$\\
$\rho(G\setminus\mathcal{G}) = \rho(G\setminus\mathcal{G}) + 1$\\
Compute $\mathcal{P}_{\mathcal{G}}^{h}(Q)$, $\mathcal{A}_{\mathcal{G}}^{h}(Q)$\label{snap2}\\
$[\eta(\mathcal{G}),\rho(\mathcal{G}),\sigma(\mathcal{G})] =$\textsc{ElementIndicators}$(\{\phi_{k}\}^{m-1}_{k=1},\mathcal{P}_{\mathcal{G}}^{h},\{\kappa_{k}\}^{k_{c}}_{k=1},\mathcal{A}_{\mathcal{G}}^{h},W_{k'},\mathcal{S}_{k'}^{\mathfrak{I}},N_{H'})$}
 $\{\phi_{k}\}^{m}_{k=1}:= \mbox{POD}(\mathcal{P}_{G}^{h},m)$\\
$\{\kappa_{k}\}^{k_{c}}_{k=1}:= \mbox{POD}(\mathcal{A}_{G}^{h},\varepsilon_{\mbox{{\scriptsize tol}}}^{c})$ \label{POD-op}\\
$[W_{k'}, \mathcal{S}_{k'}^{\mathfrak{I}}]=$\textsc{EPM-Indicator}$(\mathcal{P}^{h}_{G},\mathcal{A}_{G}^{h}, \varepsilon_{\mbox{{\scriptsize tol}}}^{err},\text{tol}_{k'}, \varepsilon_{\mbox{{\scriptsize tol}}}^{c}, \varepsilon_{\mbox{{\tiny tol}}}^{\mbox{{\tiny int}}}$, $N_{\mbox{{\tiny max}}}^{\mbox{{\tiny int}}}, m_{\mbox{{\scriptsize max}}},N_{H'},\Xi_{G}, \Sigma)$\label{update_kstrich}\\
$Q = $\textsc{QP-Indicator}$(\mathcal{P}_{G}^{h},N_{H'},W_{k'}, \mathcal{S}_{k'}^{\mathfrak{I}},Q_{\text{{\tiny max}}})$\label{qp2}\\
Possibly adapt $G$ and $\Xi_{G}$ if $Q$ has changed.
}
 \Return $\mathcal{M}^{\mathcal{P}}_{\Xi}, \mathcal{M}^{\mathcal{A}}_{\Xi},\Xi_{G}$
\end{algorithm}

A main difference to the Algorithm in \cite{OS10} is the usage of the \textsc{QP-Indicator}, which chooses the number of quadrature points $Q$ used in \eqref{disc_1dprob_nonlin}. To decide whether $Q$ has to be increased or not we apply a POD 
to $\mathcal{P}^{h}_{c}$ in line \ref{qp1} ($\mathcal{P}^{h}_{G}$ in line \ref{qp2})
and compare the convergence rates of the eigenvalues of the POD with $\|\bar{p}_{l,k'}^{H'}\|_{L^{2}(\Omega_{1D})}^{2}$, $l=1,..,10$, where the coefficients $\bar{p}_{l,k'}^{H'} \in X^{H'}$ solve \eqref{red_prob_hmr_epm}. If we observe that the decay rate of the coefficients is worse than the rate of the eigenvalues by at least $50\%$ on $5$ consecutive values\footnote{This can be verified by comparing the slope of the tangents.}, and $Q$ is smaller than $Q_{\text{{\tiny max}}}$, we increment $Q$ by one. Note that we want to increment $Q$ only if we observe a significant deviation of the coefficients from the eigenvalues, which is why we proceed rather conservatively. Note also that the \textsc{QP-Indicator} thus enforces the adaptation of the reduction space $Y_{m}$ and the collateral basis space $W_{k}$ to the reference solution $p^{H\times h}$ and the nonlinear operator $A(p^{H\times h})$ by increasing the amount of information on the dynamics in the dominant direction in the spaces $Y_{m}$ and $W_{k}$, if necessary. The initial value $Q_{0}$ is usually set to $1$. Note that the fact that $G$ is a product-like hyper-rectangular grid prevents the applicability of Algorithm \ref{adapt-train} to high parameter dimensions.  However, if $\dim (\mathcal{D}) \gg 1$ we may instead consider an anisotropic adaptive refinement strategy or use a clustering algorithm (cf. \cite{PeBuWiBu13}).\\
Apart from the just stated differences and the additional computation of the snapshots $\mathcal{A}^{h}_{G}$ in line \ref{snap1} and \ref{snap2}, and the POD for the computation of the small collateral basis $\{\kappa_{k}\}^{k_{c}}_{k=1}$ in line \ref{POD-op}, Algorithm \ref{adapt-train} follows the lines of the corresponding Algorithm in \cite{OS10}. Thus, we use the cell indicators  $ \eta(g) := \min_{\mu \in \Xi_{g}} \Delta_{m}^{k}(\mu)$ and $\sigma(g) := \diam(g)\cdot \rho(g)$, where $\rho(g)$ counts the number of loops in which the cell $g$ has not been refined, since its last refinement. We mark for fixed $\theta \in (0,1]$ in each iteration the $\theta N_{G}$ cells $g$ with the smallest indicators $\eta(g)$ and additionally the cells for which $\sigma(g)$ lies above a certain threshold $\sigma_{thres}$. Then, all cells marked for refinement are bisected in each direction. Finally, we note that for each parameter value in $\Xi_{G}$ we compute the snapshots $\mathcal{P}^{h}(\mu)$ and $\mathcal{A}^{h}(\mu)$, add these snapshots to the already computed small bases $\{\phi_{l}\}_{l=1}^{m-1}$ and $\{\kappa_{l}\}_{l=1}^{k_{c}}$, compute the (coarse) solution $p^{H'}_{m,k}$ of \eqref{red_prob_hmr_epm}, and use the a posteriori error estimator to assess whether the span of the small bases and the current snapshots yields a good approximation. Note that both for the computation of $p^{H'}_{m,k}$ and the error estimator within the adaptive refinement procedure we employ a coarser discretization in the dominant direction with a mesh size $H'$ and an associated coarser finite element space $X^{H'}$ of dimension $N^{H'}$. \\

\paragraph*{Algorithm \ref{adapt-RB-HMR} (\textsc{Adaptive-RB-HMR})}
At first, the training sets $\Xi_{G_{0}}$ and $\Xi_{c}$ are formed by sampling $n_{\Xi}$ or $n_{c}$ parameter values from the uniform distribution over each $g \in G_{0}$, where $n_{c} > n_{\Xi}$. Subsequently Algorithm \ref{adapt-train} is called to generate the discrete manifolds $\mathcal{M}^{\mathcal{P}}_{\Xi}$ \eqref{manifold-nonlin} and $\mathcal{M}^{\mathcal{A}}_{\Xi}$ \eqref{operator-manifold}. Finally, we apply a POD to determine the principal components $\{ \phi_{1},...,\phi_{m}\}$ and  $\{ \kappa_{1},...,\kappa_{k}\}$ of $\mathcal{M}^{\mathcal{P}}_{\Xi}$ and $\mathcal{M}^{\mathcal{A}}_{\Xi}$, which then span the reduction space $Y_{m}$ and the collateral basis space $W_{k}$. \\
Finally, we point out that the spaces $Y_{m}$ and $W_{k}$ approximate the discrete manifolds $\mathcal{M}^{\mathcal{P}}_{\Xi}$ and $\mathcal{M}^{\mathcal{A}}_{\Xi}$. However, thanks to the design of the parametrized 1D problem and the parametrized operator evaluations we expect that our choices of $Y_{m}$ and $W_{k}$ also allow for a good approximation of  the reference solution $p^{H\times h}$ and the range of the operator $A(p^{H\times h})$. This is demonstrated in \S \ref{numerics_nonlin}. For details on the choice of the input parameters $m_{max}, i_{max}, n_{\Xi}$, $\sigma_{thres}$ and $N_{H'}$ we refer to \cite{OS10}.
\begin{algorithm}[t]
\LinesNotNumbered
\caption{Construction of the reduction space $Y_{m}$ and the collateral basis space $W_{k}$\label{adapt-RB-HMR}}
\textsc{Adaptive-RB-HMR}$(G_{0},m_{\text{{\tiny max}}},i_{\text{{\tiny max}}},n_{\Xi},n_{c},\theta,\sigma_{\text{{\tiny thres}}},N_{H'},\varepsilon_{\mbox{{\tiny tol}}}^{\text{{\tiny HMR}}},\varepsilon_{\mbox{{\tiny tol}}}^{\text{{\tiny EPM}}},...$\\
\hspace{200pt}$...\varepsilon_{\mbox{{\tiny tol}}}^{\text{{\tiny err}}},\varepsilon_{\mbox{{\tiny tol}}}^{c},\text{tol}_{k'},\varepsilon_{\mbox{{\tiny tol}}}^{\mbox{{\tiny int}}},N_{\mbox{{\tiny max}}}^{\mbox{{\tiny int}}},Q_{0},Q_{\mbox{{\tiny{max}}}},\Sigma)$\\
\textbf{Initialize} $\Xi_{G_{0}}$, $\Xi_c$\\
$[\mathcal{M}^{\mathcal{P}}_{\Xi}, \mathcal{M}^{\mathcal{A}}_{\Xi},\Xi_{G}]=$ \textsc{AdaptiveTrainExtension}$(G_{0},\Xi_{G_{0}},\Xi_{c}, m_{\text{{\tiny max}}},i_{\text{{\tiny max}}},n_{\Xi},... $\\
\qquad \qquad \qquad \qquad \qquad \qquad \qquad \qquad \qquad $... \theta,\sigma_{\text{{\tiny thres}}},N_{H'},\varepsilon_{\mbox{{\tiny tol}}}^{\text{{\tiny err}}}, \varepsilon_{\mbox{{\scriptsize tol}}}^{c}, \text{tol}_{k'},\varepsilon_{\mbox{{\tiny tol}}}^{\mbox{{\tiny int}}}, N_{\mbox{{\tiny max}}}^{\mbox{{\tiny int}}}, Q_{0},Q_{\mbox{{\tiny{max}}}},\Sigma)$\\
$Y_{m}:= \mbox{POD}(\mathcal{P}_{G}^{h},\varepsilon_{\mbox{{\tiny tol}}}^{\text{{\tiny HMR}}})$, such that $e^{\mbox{{\tiny POD}}}_{m}\leq\varepsilon_{\mbox{{\tiny tol}}}^{\text{{\tiny HMR}}}$.\\
$W_{k}:= \mbox{POD}(\mathcal{A}_{G}^{h},\varepsilon_{\mbox{{\tiny tol}}}^{\text{{\tiny EPM}}})$, such that $e^{\mbox{{\tiny POD}}}_{k}\leq\varepsilon_{\mbox{{\tiny tol}}}^{\text{{\tiny EPM}}}$. \\
$\mathcal{S}_{k}^{\mathfrak{I}} = $\textsc{adaptive EPM}$(W_{k},\mathcal{A}_{G}^{h},\varepsilon_{\mbox{{\tiny tol}}}^{\mbox{{\tiny int}}},N_{\mbox{{\tiny max}}}^{\mbox{{\tiny int}}},\Xi_{G}, Y_{m},N_{H'})$\\
\Return $Y_{m}$, $W_{k}$, $\mathcal{S}_{k}^{\mathfrak{I}}$
\end{algorithm}

\subsection{A posteriori error estimates}\label{sect-apostest}\label{subsect-BRR}\label{subsect_constants}
We apply the Brezzi-Rappaz-Raviart (BRR) theory \cite{BRR81,CalRap1997} to  derive a rigorous a posteriori error bound for the error between the reduced solution $p_{m,k}^{H}$ of \eqref{red_prob_hmr_epm} and a reference solution $p^{H\times h}$ \eqref{truth_nonlin}, which takes into account both the contributions of the model reduction and the approximation of the nonlinear operator. 
To this end we first define the inf-sup stability factor and the continuity and the Lipschitz constant: 
\begin{align}
\label{inf-sup-apost} \beta_{p} &:= \underset{|w^{H\times h}|_{W^{1,p}(\Omega)} \neq 0}{\underset{w^{H\times h} \in V^{H\times h}}{\inf}} \underset{|v^{H\times h}|_{W^{1,q}(\Omega)} \neq 0}{\underset{v^{H\times h} \in V^{H\times h}}{\sup}} \frac{\langle F^{\prime}(p_{m,k}^{H}) w^{H\times h}, v^{H\times h} \rangle_{W^{-1,p}(\Omega) W^{1,q}(\Omega)}}{| w^{H\times h}|_{W^{1,p}(\Omega)} | v^{H\times h}|_{W^{1,q}(\Omega)}},\\ 
\label{continuity-apost}
\gamma_{p} &:= \underset{|w^{H\times h}|_{W^{1,p}(\Omega)} \neq 0}{\underset{w^{H\times h} \in V^{H\times h}}{\sup}} \underset{|v^{H\times h}|_{W^{1,q}(\Omega)} \neq 0}{\underset{v^{H\times h} \in V^{H\times h}}{\sup}} \frac{\langle F^{\prime}(p_{m,k}^{H}) w^{H\times h}, v^{H\times h} \rangle_{W^{-1,p}(\Omega) W^{1,q}(\Omega)}}{| w^{H\times h}|_{W^{1,p}(\Omega)} | v^{H\times h}|_{W^{1,q}(\Omega)}},\\
\label{Lipschitz-apost}
L_{p} &:= \underset{w^{H\times h} \in B(p^{H}_{m,k},R)}{\sup} \frac{\| F^{\prime}(w^{H\times h}) - F^{\prime}(p^{H}_{m,k}) \|_{W^{1,p}(\Omega),W^{-1,p}(\Omega)}}{|\, w^{H\times h} - p^{H}_{m,k}\,|_{W^{1,p}(\Omega)}},
\end{align}
where $B(p_{m,k}^{H},R):=\{w^{H\times h} \in V^{H\times h} : \, |\,z- p_{m,k}^{H}\,|_{W^{1,p}(\Omega)} \leq R, R \in \mathbb{R}^{+}\}$ and the index $p$ comes from the space $W^{1,p}(\Omega)$. Note that we compute the Lipschitz constant only on $B(p_{m,k}^{H},R)$ both in order to obtain a sharper estimate and to include  nonlinear operators whose Fr\'{e}chet derivative is not Lipschitz continuous on the whole space $V^{H\times h}$. We comment in Section \ref{subsubsect: estimate constants} on how we may obtain estimates for the constants defined in \eqref{inf-sup-apost}-\eqref{Lipschitz-apost}. Now we may define a proximity indicator \cite{VerPat05,CaToUr09}
$
\tau_{m,p}^{k} := \frac{2L_{p}}{\beta_{p}^{2}} ( \| F(p_{m,k}^{H}) - P_{k}^{L}[F(p_{m,k}^{H})] \|_{W^{-1,p}(\Omega)} + \| P_{k}^{L}[F(p_{m,k}^{H})] \|_{W^{-1,p}(\Omega)} )
$
and obtain the following result.
\begin{proposition}[A rigorous a posteriori error bound]\label{error_bound}  Let $2 \leq p < \infty$. If $\tau_{m,p}^{k} < 1$ then there exists a unique solution $p^{H\times h} \in B(p_{m,k}^{H},\frac{\beta_{p}}{L_{p}})$ of \eqref{truth_nonlin} and the following a posteriori error estimate holds
\begin{equation}\label{delta_BBR}
| p^{H\times h} - p_{m,k}^{H} |_{W^{1,p}(\Omega)} \leq \Delta_{m,p}^{k}  := \frac{\beta_{p}}{L_{p}}(1 - \sqrt{1 - \tau_{m,p}^{k}}).
\end{equation}
\end{proposition}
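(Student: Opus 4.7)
The plan is to apply the Brezzi--Rappaz--Raviart (BRR) theorem \cite{BRR81,CalRap1997} to the map $F:V^{H\times h}\to (V^{H\times h})'$ around the reduced solution $p_{m,k}^{H}$, viewed here as an \emph{approximate zero} of $F$. The three BRR ingredients required at the linearization point $p_{m,k}^{H}$ are (a) an inf-sup estimate on $F'(p_{m,k}^{H})$, (b) local Lipschitz continuity of $F'$, and (c) a bound on the residual $F(p_{m,k}^{H})$. Ingredients (a) and (b) are taken directly from the definitions \eqref{inf-sup-apost} and \eqref{Lipschitz-apost} of $\beta_{p}$ and $L_{p}$, so the only work is to produce a computable residual bound and then to run the BRR contraction argument.

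The residual bound is the key step. Since $p_{m,k}^{H}$ only satisfies $\langle P_{k}^{L}[F(p_{m,k}^{H})], v_{m}^{H}\rangle=0$ on the \emph{reduced} space $V_{m}^{H}$ (not on all of $V^{H\times h}$) and since the empirical projection is applied inside the form, the natural decomposition is
\begin{equation*}
F(p_{m,k}^{H}) \;=\; \bigl(F(p_{m,k}^{H}) - P_{k}^{L}[F(p_{m,k}^{H})]\bigr) \;+\; P_{k}^{L}[F(p_{m,k}^{H})].
\end{equation*}
Taking $\|\cdot\|_{W^{-1,p}(\Omega)}$ of both sides and applying the triangle inequality yields exactly the sum that appears in the definition of $\tau_{m,p}^{k}$. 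The first term measures the nonlinear-operator approximation (controllable a posteriori via Proposition \ref{apost-epm}, applied to $A$), while the second term is the reduced-problem residual, which is computable since $P_{k}^{L}$ has been fully assembled; note that the latter does not vanish over $V^{H\times h}$ because the Galerkin condition only holds in $V_{m}^{H}$.

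With these three ingredients in place, the BRR argument proceeds by considering the fixed-point map $G:w\mapsto w - F'(p_{m,k}^{H})^{-1}F(w)$ on the closed ball $B(p_{m,k}^{H},\alpha)\subset V^{H\times h}$ in the $|\cdot|_{W^{1,p}(\Omega)}$ norm. Using the inf-sup bound one shows $\|F'(p_{m,k}^{H})^{-1}\|\leq 1/\beta_{p}$, and combining this with the Lipschitz bound on $F'$ gives for $w_{1},w_{2}\in B(p_{m,k}^{H},\alpha)$ the contraction estimate $|G(w_{1})-G(w_{2})|_{W^{1,p}}\leq (L_{p}\alpha/\beta_{p})|w_{1}-w_{2}|_{W^{1,p}}$ together with the self-mapping estimate $|G(w)-p_{m,k}^{H}|_{W^{1,p}}\leq (L_{p}/2\beta_{p})\alpha^{2} + \|F(p_{m,k}^{H})\|_{W^{-1,p}}/\beta_{p}$. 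Imposing both conditions leads to the quadratic $L_{p}\alpha^{2} - \beta_{p}\alpha + \|F(p_{m,k}^{H})\|_{W^{-1,p}} \leq 0$, which is solvable precisely when $\tau_{m,p}^{k}<1$ (using the triangle-inequality upper bound on the residual), and whose smaller root equals $(\beta_{p}/2L_{p})(1-\sqrt{1-\tau_{m,p}^{k}})\cdot 2 = \Delta_{m,p}^{k}$. Banach's fixed-point theorem then furnishes a unique $p^{H\times h}\in B(p_{m,k}^{H},\alpha)$ with $F(p^{H\times h})=0$, establishing both existence/uniqueness in the ball of radius $\beta_{p}/L_{p}$ and the a posteriori bound \eqref{delta_BBR}.

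The main conceptual obstacle is the non-Hilbertian, $W^{1,p}$/$W^{1,q}$ duality pairing used in \eqref{inf-sup-apost}--\eqref{Lipschitz-apost}: one must be careful that the inverse $F'(p_{m,k}^{H})^{-1}$ makes sense as a bounded operator $W^{-1,p}(\Omega)\to W^{1,p}(\Omega)$ on $V^{H\times h}$, which follows because the inf-sup and sup-sup conditions on a finite-dimensional square system imply invertibility with norm $1/\beta_{p}$. The remaining technical subtlety is that $L_{p}$ is defined only on $B(p_{m,k}^{H},R)$, so one has to verify a posteriori that the ball of radius $\Delta_{m,p}^{k}$ produced by the argument is contained in $B(p_{m,k}^{H},R)$; since $\Delta_{m,p}^{k}\leq \beta_{p}/L_{p}$ by construction, this is automatic provided $R$ has been chosen at least as large as $\beta_{p}/L_{p}$, which is a mild assumption to fold into the statement.
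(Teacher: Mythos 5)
Your proposal is correct and follows essentially the same route as the paper, whose proof simply invokes the Brezzi--Rappaz--Raviart/Newton--Kantorovich contraction argument of \cite{CaToUr09,VerPat05,CalRap1997} that you have written out: triangle-inequality bound on the residual $\|F(p_{m,k}^{H})\|_{W^{-1,p}(\Omega)}$ by the two terms in $\tau_{m,p}^{k}$, inf-sup bound $\|F'(p_{m,k}^{H})^{-1}\|\leq 1/\beta_{p}$ on the finite-dimensional space, local Lipschitz continuity of $F'$, and a Banach fixed-point argument for $G(w)=w-F'(p_{m,k}^{H})^{-1}F(w)$. Only two minor touch-ups are needed: the admissible radii solve $(L_{p}/2)\alpha^{2}-\beta_{p}\alpha+\|F(p_{m,k}^{H})\|_{W^{-1,p}(\Omega)}\leq 0$ (you dropped the factor $1/2$ coming from the Taylor remainder and compensated ad hoc, and with that factor the smaller root is indeed bounded by $\Delta_{m,p}^{k}$ under $\tau_{m,p}^{k}<1$), and uniqueness in the full ball of radius $\beta_{p}/L_{p}$ follows from noting that $G$ is a contractive self-map on every ball of radius $\alpha\in[\Delta_{m,p}^{k},\beta_{p}/L_{p})$, not just on the smallest one.
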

\begin{proof}
The proof follows the ideas of \cite{CaToUr09}, which in turn is based on \cite{VerPat05,CalRap1997}.   
\end{proof}

Next, we analyze as in \cite{CaToUr09} the effectivity $\Delta^{k}_{m,p}/|p^{H\times h} - p_{m,k}^{H}|_{W^{1,p}(\Omega)}$ of the error bound \eqref{delta_BBR}.

\begin{proposition}[Effectivity]\label{eff}
Let $2 \leq p < \infty$ and let us assume that 
\begin{equation}\label{mod_dom_ei}
\| F(p_{m,k}^{H}) - P_{k}^{L}[F(p_{m,k}^{H})] \|_{W^{-1,p}(\Omega)} \leq c_{\text{{\tiny err}}} \| P_{k}^{L}[F(p_{m,k}^{H})] \|_{W^{-1,p}(\Omega)}
\end{equation}
for $c_{\text{{\tiny err}}} \in [0,1)$ and set 
$
C_{\text{{\tiny err}}} := \frac{1 - c_{\text{{\tiny err}}}}{1 + c_{\text{{\tiny err}}}}. 
$
If $\tau_{m,p}^{k} \leq \frac{1}{2} C_{\text{{\tiny err}}}$ we have
\begin{equation}\label{effectivty}
\Delta_{m,p}^{k} \leq 4 C_{\text{{\tiny err}}}^{-1} \,\frac{\gamma_{p}}{\beta_{p}}\, |p^{H\times h} - p_{m,k}^{H}|_{W^{1,p}(\Omega)}.
\end{equation}
\end{proposition}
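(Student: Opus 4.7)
}
The plan is to unwind the definition of $\Delta_{m,p}^{k}$, trade the empirical-projection residual norm for the true residual norm of $F(p^{H}_{m,k})$ via the hypothesis \eqref{mod_dom_ei}, and finally relate the true residual to $|p^{H\times h}-p^{H}_{m,k}|_{W^{1,p}(\Omega)}$ by a mean-value argument built around $F(p^{H\times h})=0$.

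First, the assumption $\tau_{m,p}^{k}\le \tfrac{1}{2}C_{\text{{\tiny err}}}\le \tfrac{1}{2}<1$ permits the application of Proposition \ref{error_bound}, so $|e|_{W^{1,p}(\Omega)}\le \beta_{p}/L_{p}$ for $e:=p^{H\times h}-p^{H}_{m,k}$, and also yields the elementary inequality $1-\sqrt{1-\tau_{m,p}^{k}}=\tau_{m,p}^{k}/(1+\sqrt{1-\tau_{m,p}^{k}})\le \tau_{m,p}^{k}$. Consequently
\begin{equation*}
\Delta_{m,p}^{k}\;\le\;\frac{\beta_{p}}{L_{p}}\,\tau_{m,p}^{k}.
\end{equation*}

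Next, I would use \eqref{mod_dom_ei} twice. Setting $b:=\|P_{k}^{L}[F(p^{H}_{m,k})]\|_{W^{-1,p}(\Omega)}$ and $a:=\|F(p^{H}_{m,k})-P_{k}^{L}[F(p^{H}_{m,k})]\|_{W^{-1,p}(\Omega)}$, the hypothesis reads $a\le c_{\text{{\tiny err}}}b$, and the triangle inequality gives $b\le \|F(p^{H}_{m,k})\|_{W^{-1,p}(\Omega)}+a$, hence $(1-c_{\text{{\tiny err}}})b\le \|F(p^{H}_{m,k})\|_{W^{-1,p}(\Omega)}$. Therefore
\begin{equation*}
a+b\;\le\;(1+c_{\text{{\tiny err}}})\,b\;\le\;\frac{1+c_{\text{{\tiny err}}}}{1-c_{\text{{\tiny err}}}}\,\|F(p^{H}_{m,k})\|_{W^{-1,p}(\Omega)}\;=\;\frac{1}{C_{\text{{\tiny err}}}}\|F(p^{H}_{m,k})\|_{W^{-1,p}(\Omega)},
\end{equation*}
and, by the very definition of $\tau_{m,p}^{k}$,
\begin{equation*}
\tau_{m,p}^{k}\;\le\;\frac{2L_{p}}{\beta_{p}^{2}\,C_{\text{{\tiny err}}}}\,\|F(p^{H}_{m,k})\|_{W^{-1,p}(\Omega)}.
\end{equation*}

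The core step is to control $\|F(p^{H}_{m,k})\|_{W^{-1,p}(\Omega)}$ by $|e|_{W^{1,p}(\Omega)}$. Writing $w_{t}:=p^{H\times h}+t(p^{H}_{m,k}-p^{H\times h})$ and using $F(p^{H\times h})=0$,
\begin{equation*}
F(p^{H}_{m,k})\;=\;F(p^{H}_{m,k})-F(p^{H\times h})\;=\;\int_{0}^{1}F'(w_{t})(p^{H}_{m,k}-p^{H\times h})\,dt,
\end{equation*}
so invoking the continuity constant $\gamma_{p}$ and the Lipschitz constant $L_{p}$ from \eqref{continuity-apost}--\eqref{Lipschitz-apost} together with $|w_{t}-p^{H}_{m,k}|_{W^{1,p}(\Omega)}=(1-t)|e|_{W^{1,p}(\Omega)}$ yields
\begin{equation*}
\|F(p^{H}_{m,k})\|_{W^{-1,p}(\Omega)}\;\le\;\Bigl(\gamma_{p}+\tfrac{L_{p}}{2}\,|e|_{W^{1,p}(\Omega)}\Bigr)|e|_{W^{1,p}(\Omega)}.
\end{equation*}
Since $|e|_{W^{1,p}(\Omega)}\le \beta_{p}/L_{p}\le \gamma_{p}/L_{p}$ (the last step uses $\gamma_{p}\ge\beta_{p}$, which is automatic from \eqref{inf-sup-apost}--\eqref{continuity-apost}), the bracket is at most $2\gamma_{p}$. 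Inserting this into the chain of inequalities produces
\begin{equation*}
\Delta_{m,p}^{k}\;\le\;\frac{\beta_{p}}{L_{p}}\cdot\frac{2L_{p}}{\beta_{p}^{2}\,C_{\text{{\tiny err}}}}\cdot 2\gamma_{p}\,|e|_{W^{1,p}(\Omega)}\;=\;4\,C_{\text{{\tiny err}}}^{-1}\,\frac{\gamma_{p}}{\beta_{p}}\,|e|_{W^{1,p}(\Omega)},
\end{equation*}
which is \eqref{effectivty}. The only delicate point is the mean-value bound on $\|F(p^{H}_{m,k})\|_{W^{-1,p}(\Omega)}$: because $F$ is genuinely nonlinear and the Lipschitz constant in \eqref{Lipschitz-apost} is only assumed on the ball $B(p^{H}_{m,k},R)$, one has to make sure the path $w_{t}$ stays inside this ball, which is precisely what $|e|_{W^{1,p}(\Omega)}\le \beta_{p}/L_{p}$ (guaranteed by Proposition \ref{error_bound} under $\tau_{m,p}^{k}<1$) provides.
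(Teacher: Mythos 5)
Your argument is correct, and its backbone coincides with the paper's proof: both use $1-\sqrt{1-\tau_{m,p}^{k}}\leq\tau_{m,p}^{k}$, both convert the sum of the two residual dual norms into $C_{\text{{\tiny err}}}^{-1}\|F(p_{m,k}^{H})\|_{W^{-1,p}(\Omega)}$ via \eqref{mod_dom_ei}, and both expand $F(p_{m,k}^{H})=F(p_{m,k}^{H})-F(p^{H\times h})$ around $p_{m,k}^{H}$ to obtain the bound $\gamma_{p}|e|_{W^{1,p}(\Omega)}+\tfrac{L_{p}}{2}|e|_{W^{1,p}(\Omega)}^{2}$ with $e:=p^{H\times h}-p_{m,k}^{H}$ (the paper phrases this as the identity preceding \eqref{est2}). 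Where you genuinely differ is in how the quadratic term is closed: the paper keeps $\Delta_{m,p}^{k}$ on both sides, uses $|e|_{W^{1,p}(\Omega)}\leq\Delta_{m,p}^{k}$ together with $L_{p}\Delta_{m,p}^{k}\leq\beta_{p}\tau_{m,p}^{k}\leq\tfrac{1}{2}C_{\text{{\tiny err}}}\beta_{p}$, and absorbs $\tfrac{1}{2}L_{p}(\Delta_{m,p}^{k})^{2}$ into the left-hand side --- this absorption is the only place where the hypothesis $\tau_{m,p}^{k}\leq\tfrac{1}{2}C_{\text{{\tiny err}}}$ enters quantitatively. You instead bound the bracket $\gamma_{p}+\tfrac{L_{p}}{2}|e|_{W^{1,p}(\Omega)}$ directly by $2\gamma_{p}$, using $|e|_{W^{1,p}(\Omega)}\leq\beta_{p}/L_{p}$ (from the ball in Proposition \ref{error_bound}) and the automatic inequality $\beta_{p}\leq\gamma_{p}$; this avoids the self-referential absorption step, in fact requires only $\tau_{m,p}^{k}<1$ beyond \eqref{mod_dom_ei}, and even delivers the slightly sharper constant $3\,C_{\text{{\tiny err}}}^{-1}\gamma_{p}/\beta_{p}$ before you relax it to $4$. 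Your closing caveat about the segment $w_{t}$ staying inside $B(p_{m,k}^{H},R)$ is the right one to flag, and it applies equally to the integral term in the paper's version.
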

\begin{proof}
We simplify notations by setting $\la \cdot , \cdot \ra := \la \cdot , \cdot \ra_{W^{-1,p}(\Omega) W^{1,q}(\Omega)}$. It is easy to see (cf.~\cite{CaToUr09}) that \eqref{mod_dom_ei} implies
\begin{align}
\nonumber \| P_{k}^{L}[F(p_{m,k}^{H})] &\|_{W^{-1,p}(\Omega)} + \| F(p_{m,k}^{H}) - P_{k}^{L}[F(p_{m,k}^{H})] \|_{W^{-1,p}(\Omega)} \\[-1.5ex]
\label{mo_ei_verh}\\[-1.5ex]
\nonumber \qquad \qquad &\leq C_{\text{{\tiny err}}}^{-1} \bigl (\| P_{k}^{L}[F(p_{m,k}^{H})] \|_{W^{-1,p}(\Omega)} - \| F(p_{m,k}^{H}) - P_{k}^{L}[F(p_{m,k}^{H})] \|_{W^{-1,p}(\Omega)} \bigr). 
\end{align}
The following estimate differs from \cite{CaToUr09}, as in \cite{CaToUr09} a quadratic nonlinear PDE in a Hilbert space is considered and the proof of the effectivity of the error bound heavily relies on these two assumptions. As $\tau_{m,p}^{k} \leq \frac{1}{2} C_{\text{{\tiny err}}} \leq 1$ we may apply Proposition \ref{error_bound} to obtain 
\begin{align*}
  \bigl \la F(p_{m,k}^{H}) - P_{k}^{L}[F(p_{m,k}^{H})], &v^{H\times h} \bigr \ra + \left\la P_{k}^{L}[F(p_{m,k}^{H})], v^{H\times h} \right \ra 
= - \la F^{\prime}(p_{m,k}^{H}) (p^{H\times h} - p_{m,k}^{H}) , v^{H\times h}  \ra  \\
&+ \left \la \int_{0}^{1} \left \{ F^{\prime}(p_{m,k}^{H}) - F^{\prime}(p_{m,k}^{H} + t(p^{H\times h} - p_{m,k}^{H})) \right \}  (p^{H\times h} - p_{m,k}^{H}) \, dt , v^{H\times h} \right \ra .
\end{align*}
Exploiting \eqref{continuity-apost}, \eqref{Lipschitz-apost}, and \eqref{mo_ei_verh} then yields 
\begin{align}
\nonumber &\| P_{k}^{L}[F(p_{m,k}^{H})] \|_{W^{-1,p}(\Omega)} + \|  F(p_{m,k}^{H}) - P_{k}^{L}[F(p_{m,k}^{H})] \|_{W^{-1,p}(\Omega)} \\[-1.5ex]
\label{est2} \\[-1.5ex]
\nonumber &\qquad \qquad \qquad\leq C_{\text{{\tiny err}}}^{-1} \bigl(\gamma_{p} \, |p^{H\times h} - p_{m,k}^{H}|_{W^{1,p}(\Omega)} + \frac{L_{p}}{2} |p^{H\times h} - p_{m,k}^{H}|_{W^{1,p}(\Omega)}^{2}\bigr ). 
\end{align}
Thanks to $\tau_{m,p}^{k} \leq 1$ we have $1 - \sqrt{1 - \tau_{m,p}^{k}} \leq \tau_{m,p}^{k}$ and may thus estimate \cite{CaToUr09}
\begin{align}\label{est3}
\Delta_{m,p}^{k} &= \frac{\beta_{p}}{L_{p}} \bigl (1 - \sqrt{1 - \tau_{m,p}^{k}} \bigr ) \leq   \frac{2}{\beta_{p}} \left (\| P_{k}^{L}[F(p_{m,k}^{H})] \|_{W^{-1,p}(\Omega)} + \|  F(p_{m,k}^{H}) - P_{k}^{L}[F(p_{m,k}^{H})] \|_{W^{-1,p}(\Omega)} \right).
\end{align} 
Following the ideas in \cite{CaToUr09} we invoke \eqref{est2}, \eqref{est3} and Proposition \ref{error_bound} to get
\begin{align*}
\frac{1}{2} C_{\text{{\tiny err}}} \beta_{p} \Delta_{m,p}^{k} &\leq \gamma_{p} \, |p^{H\times h} - p_{m,k}^{H}|_{W^{1,p}(\Omega)} + \frac{L_{p}}{2} |p^{H\times h} - p_{m,k}^{H}|_{W^{1,p}(\Omega)}^{2} \\
&\leq \gamma_{p} \, |p^{H\times h} - p_{m,k}^{H}|_{W^{1,p}(\Omega)} + \frac{1}{2} \Delta_{m,p}^{k} (L_{p}\Delta_{m,p}^{k}).
\end{align*}
Finally, we employ \eqref{est3} again and $\tau_{m,p}^{k} \leq \frac{1}{2} C_{\text{{\tiny err}}}$ to obtain
\begin{align*}
\Delta_{m,p}^{k} \leq 4 C_{\text{{\tiny err}}}^{-1} \,\frac{\gamma_{p}}{\beta_{p}}\, |p^{H\times h} - p_{m,k}^{H}|_{W^{1,p}(\Omega)}.
\end{align*}
\end{proof}

Note that the terms $\| F(p_{m,k}^{H}) - P_{k}^{L}[F(p_{m,k}^{H})] \|_{W^{-1,p}(\Omega)}$ and $\| P_{k}^{L}[F(p_{m,k}^{H})] \|_{W^{-1,p}(\Omega)} $ are computable as $V^{H\times h}$ is a finite dimensional space. Alternatively, the dual norms may be further estimated by a localized residual type estimator (cf.~\cite{PouRap94}). To obtain the required interpolation estimate for the terms $|v^{H\times h} - v_{m}^{H}|_{W^{1,p}(\Omega)}$ we propose to replace $v^{H\times h}$ in the latter term by $v_{m'}^{H}$ with $m' > m$.\\

Note that the formulation in \eqref{nonlinear_pde} also includes nonlinear operators which have to be considered as a mapping from $W^{1,p}(\Omega)$ onto $W^{-1,p}(\Omega)$ for $p > 2$ for instance because they are not $C^{1}$-mappings with respect to the space $H^{1}(\Omega)$. Therefore we also derive an error bound for $| p^{H\times h} - p_{m,k}^{H} |_{H^{1}(\Omega)}$ for problems with $p>2$. As in \cite{CalRap1997} we assume that for all $z \in B(p_{m,k}^{H},R)$, $F^{\prime}(z): W^{1,p}(\Omega) \rightarrow W^{-1,p}(\Omega)$ can be continuously extended as an operator in $L(H^{1}(\Omega), H^{-1}(\Omega))$. In general this can be achieved by applying the Hahn-Banach theorem. Furthermore, we require that
\begin{align}
\label{inf-sup-apost-H1} 0 < \beta_{2,p} &:= \underset{|w^{H\times h}|_{H^{1}(\Omega)} \neq 0}{\underset{w^{H\times h} \in V^{H\times h}}{\inf}} \underset{|v^{H\times h}|_{H^{1}(\Omega)} \neq 0}{\underset{v^{H\times h} \in V^{H\times h}}{\sup}} \frac{\langle F^{\prime}(p_{m,k}^{H}) w^{H\times h}, v^{H\times h} \rangle}{| w^{H\times h}|_{H^{1}(\Omega)} | v^{H\times h}|_{H^{1}(\Omega)}},
\end{align}
and that there exist constants $\gamma_{2,p}$ and $L_{2,p}$ such that
\begin{align}
\label{continuity-apost-H1} \langle F^{\prime}(p_{m,k}^{H}) w^{H\times h}, v^{H\times h} \rangle &\leq \gamma_{2,p} | w^{H\times h}|_{H^{1}(\Omega)} | v^{H\times h}|_{H^{1}(\Omega)}, \\
\label{Lipschitz-apost-H1}
\| F^{\prime}(p_{m,k}^{H}) - F^{\prime}(w_{m}^{H}) \|_{H^{1}(\Omega),H^{-1}(\Omega)} &\leq L_{2,p}\, |\, p_{m,k}^{H} - w_{m}^{H}\,|_{W^{1,p}(\Omega)} \enspace \text{for} \enspace w_{m}^{H} \in B(p_{m,k}^{H},R).
\end{align}
Here, the subscript $2,p$ indicates that the argument of $F^{'}(\cdot)$ has to be in $W^{1,p}_{0}(\Omega)$, $p>2$. By transferring ideas of \cite{CalRap1997} we obtain under the assumptions of Proposition \ref{error_bound}
\begin{align*}
| p^{H\times h} - p_{m,k}^{H} |_{H^{1}(\Omega)}&\leq \frac{1}{\beta_{2,p}}\bigl ( L_{2,p} | p^{H\times h} - p_{m,k}^{H} |_{H^{1}(\Omega)} \Delta_{m,p}^{k}  
\\&\qquad \qquad +  \| F(p_{m,k}^{H}) - P_{k}^{L}[F(p_{m,k}^{H})] \|_{H^{-1}(\Omega)} + \| P_{k}^{L}[F(p_{m,k}^{H})] \|_{H^{-1}(\Omega)} \bigr).
\end{align*}
Note that this bound requires the computation or estimation of the dual norms and the appearing constants both for the $W^{1,p}$- and the $H^{1}$-norm. Thus, we employ the inverse estimate $| \, v^{H\times h} \, |_{W^{1,p}(\Omega)} \leq c_{h} | \, v^{H\times h} \, |_{H^{1}(\Omega)}$ with $c_{h}:= c(H^{2} + h^{2})^{(2-p)/(2p)}$ and a constant $c$ which is independent of $H$, $h$, $p$, and $2$ \cite{ErnGue04}. Note that the equivalence of norms on the finite dimensional space of polynomials on an element $T_{i,j}$ of the partition $T$ of $\Omega$ can be used to obtain an estimate for $c$. Note also that thanks to exponent in $c_{h}$ we expect that $c_{h}$ depends only very weakly on $h$ and $H$. Based  on that we introduce  the proximity indicator
\begin{equation}\label{proxi_2}
\tau_{m,2}^{k} := \frac{2L_{2,p}c_{h}}{\beta_{2,p}^{2}} ( \| F(p_{m,k}^{H}) - P_{k}^{L}[F(p_{m,k}^{H})] \|_{H^{-1}(\Omega)} + \| P_{k}^{L}[F(p_{m,k}^{H})] \|_{H^{-1}(\Omega)} )
\end{equation}
to derive the following  computationally more feasible $H^1$-error bound. 
\begin{proposition}[An error bound for the $H^{1}$-norm]\label{H1_bound}  Let $\tau_{m,2}^{k} < 1$ and \eqref{inf-sup-apost-H1}, \eqref{continuity-apost-H1} and \eqref{Lipschitz-apost-H1} be fulfilled. Then there exists an unique solution $p^{H\times h} \in B(p_{m,k}^{H},\frac{\beta_{2,p}}{L_{2,p}c_{h}})$ of \eqref{truth_nonlin} and the following a posteriori error estimate holds
\begin{equation}\label{delta_BBR_H1}
| p^{H\times h} - p_{m,k}^{H} |_{H^{1}(\Omega)} \leq \Delta_{m}^{k}:=\frac{\beta_{2,p}}{L_{2,p}c_{h}}(1 - \sqrt{1 - \tau_{m,2}^{k}}).
\end{equation}
If we further assume that 
\begin{equation}\label{mod_dom_ei_H1}
\| F(p_{m,k}^{H}) - P_{k}^{L}[F(p_{m,k}^{H})] \|_{H^{-1}(\Omega)} \leq c_{\text{{\tiny err}}} \| P_{k}^{L}[F(p_{m,k}^{H})] \|_{H^{-1}(\Omega)}
\end{equation}
for $c_{\text{{\tiny err}}} \in [0,1)$ and $\tau_{m,2}^{k} \leq \frac{1}{2} C_{\text{{\tiny err}}}$, where
$C_{\text{{\tiny err}}} := (1 - c_{\text{{\tiny err}}})/(1 + c_{\text{{\tiny err}}}) ,
$
we have
\begin{equation}\label{H1_estimate}
\Delta_{m}^{k} \leq 
4C_{\text{{\tiny err}}}^{-1} \frac{\gamma_{2,p} c_{h}}{\beta_{2,p}}| p^{H\times h} - p_{m,k}^{H} |_{H^{1}(\Omega)}.
\end{equation}
\end{proposition}
\begin{proof}
The proof uses the same arguments that have been applied in the proofs of Proposition \ref{error_bound} and \ref{eff}.
\end{proof}
To further estimate $\Delta_{m}^{k}$ we invoke the a posteriori error bound for the EPM in Proposition \ref{apost-epm} to replace $F(p_{m,k}^{H})$ by $P_{k'}^{L'}[F(p_{m,k}^{H})]$.  Then we define the Riesz representations $\mathcal{R}_{m}^{H\times h}$ and $\mathcal{E}_{k}^{H \times h}$ as the solutions of 
\begin{alignat}{2}\label{riesz_model}
(\mathcal{R}_{m}^{H \times h}, v^{H \times h})_{H^{1}(\Omega)} &= (P_{k}^{L}[F(p_{m,k}^{H})], v^{H \times h})_{H^{1}(\Omega)} \quad &\forall v^{H \times h} \in V^{H \times h},\\
\text{and} \qquad \qquad
(\mathcal{E}_{k}^{H \times h}, v^{H \times h})_{H^{1}(\Omega)} &= (P_{k'}^{L'}[F(p_{m,k}^{H})] - P_{k}^{L}[F(p_{m,k}^{H})], v^{H \times h})_{H^{1}(\Omega)} \quad &\forall v^{H \times h} \in V^{H \times h}.
\end{alignat}
Here, $(\cdot , \cdot )_{H^{1}(\Omega)}$ denotes the inner product associated with the $H^{1}$-semi norm. We thus obtain 
\begin{align}\label{duale_Norm1}
  |\, \mathcal{R}_{m}^{H \times h} \, |_{H^{1}(\Omega)}  &=\| P_{k}^{L}[F(p_{m,k}^{H})] \|_{H^{-1}(\Omega)}  \quad
\text{and} \quad  |\, \mathcal{E}_{k}^{H \times h} \, |_{H^{1}(\Omega)} =   \| P_{k'}^{L'}[F(p_{m,k}^{H})] - P_{k}^{L}[F(p_{m,k}^{H})] \|_{H^{-1}(\Omega)}.
\end{align}
Note that due to the definition of the snapshot set $\mathcal{M}^{\mathcal{A}}_{\Xi}$ \eqref{operator-manifold}, the a priori bound  \eqref{train_size_fixed} for the EPM is only applicable, if $\mathcal{M}^{\mathcal{A}}_{\Xi}$ is a good approximation of $\{ A(p^{H\times h}(\mu,{y})) , \, \mu \in \Xi\}$. This may be verified by comparing the convergence rates of the eigenvalues $\{ \lambda_{l}^{\text{{\tiny EPM}}} \}_{l=1}^{k_{\text{{\tiny POD}}}}$ of the POD applied to $\mathcal{M}^{\mathcal{A}}_{\Xi}$ and the coefficients $\| \int_{{\omega}} \mathcal{I}_{L}[A(p_{m,k}^{H})] \kappa_{l} \|_{L^{2}(\Omega_{1D})}^{2}$, $l=1,...,k_{\text{{\tiny POD}}}$. If the convergence rates do not coincide one may either increase the number of quadrature points in 
\eqref{disc_1dprob_nonlin} as discussed in Section \ref{adapt-RB-HMR-epm} or replace $\{ \lambda_{l}^{\text{{\tiny EPM}}} \}_{l=1}^{k_{\text{{\tiny POD}}}}$ by $\| \int_{{\omega}} \mathcal{I}_{L}[A(p_{m,k}^{H})] \kappa_{l}\|_{L^{2}(\Omega_{1D})}^{2}$, $l=1,...,k_{\text{{\tiny POD}}}$,  in the a priori bound  \eqref{train_size_fixed} for the EPM. The latter requires only the computation of $k_{\text{{\tiny POD}}}-k$ additional integrals in $y$-direction.  As the behavior of the coefficients $\int_{{\omega}} \mathcal{I}_{L}[A(p_{m,k}^{H})] \kappa_{l}$ strongly influences the convergence behavior of $P_{k}^{L}[A(p_{m,k}^{H})]$ for increasing $k$ we expect that \eqref{train_size_fixed} remains a reliable a priori bound when substituting $\{ \lambda_{l}^{\text{{\tiny EPM}}} \}_{l=1}^{k_{\text{{\tiny POD}}}}$ by $\| \int_{{\omega}} \mathcal{I}_{L}[A(p_{m,k}^{H})] \kappa_{l}\|_{L^{2}(\Omega_{1D})}^{2}$, $l=1,...,k_{\text{{\tiny POD}}}$. This is demonstrated by the numerical experiments in \S \ref{numerics_nonlin}. \\
 
 \subsubsection{Estimation of the constants}\label{subsubsect: estimate constants}\label{subsect_constants}
We close this section by addressing the computation or estimation of the constants $\beta_{p}$, $\gamma_{p}$, and $L_{p}$ for $p\geq 2$. As the constants \eqref{inf-sup-apost}-\eqref{Lipschitz-apost} are in general not computable for $p>2$ or only at unfeasible costs, we rely on estimating these constants in this case. For instance in the case of the nonlinear diffusion equation considered in the numerical examples an estimate of $\gamma_{2,p}$ and $L_{2,p}$ relies on estimates for Friedrich's inequality $\| v \|_{L^{p}(\Omega)} \leq c_{F} | v |_{W^{1,p}(\Omega)}$ and the constant in the Sobolev inequality $\| v \|_{C^{0}(\Omega)} \leq c_{E} \| v \|_{W^{1,p}(\Omega)}$. To obtain an upper bound for the constant $c_{F}$ we suggest to proceed as in \cite{Ortner09,Plu01}. A bound for the constant $c_{E,2}$ in the inequality $\| v \|_{C^{0}(\mathbb{R}^{2})} \leq c_{E,2} | v |_{W^{1,p}(\mathbb{R}^{2})}$ can be found for instance in \cite{Tal94}, Theorem 2.D. To obtain an estimate for $c_{E}$ we to multiply $v \in W^{1,p}_{0}(\Omega)$ with a cut-off function $\eta$ defined as $\eta(x,y) \equiv 1$ for $\dist( (x,y),\partial\Omega) \geq \delta$, $\eta \equiv 0$ outside $\Omega$ and with $\| \eta \|_{C^{0}(\Omega)} \leq 1$ and $\partial_{i} \eta \leq C(\delta)$, $i=1,2$ for a given constant $C(\delta)$. Then we expect that $\| v \eta \|_{C^{0}(\Omega)} \approx \| v \|_{C^{0}(\Omega)}$ as $v \in W^{1,p}_{0}(\Omega)$. Moreover, we have that 
\begin{equation}
\| \eta v \|_{C^{0}(\Omega)} \leq c_{E,2} | \eta v |_{W^{1,p}(\Omega)} \leq c_{E,2} \bigl( C(\delta)\|v \|_{L^{p}(\Omega)} + | v |_{W^{1,p}(\Omega)}\bigr),
\end{equation}
and therefore propose to employ the constant $c_{E,2}C(\delta)$ as an estimate for $c_{E}$, where $C(\delta)$ should be adapted to the considered domain. \\
To derive a lower bound for $\beta_{p}$ we suggest to proceed as in \cite{PouRap94,CalRap1997} where a finite element approximation of the nonlinear diffusion equation is considered and a lower bound of the occurring inf-sup constant is derived. However, such an estimate  is beyond the scope of this paper and therefore subject of future work. For other nonlinear operators we expect the estimates also to rely on the above inequalities. \\
As we have continuously extended $F^{\prime}(z) \in L(W^{1,p}(\Omega),W^{-1,p}(\Omega))$ to an operator in $L(H^{1}(\Omega),H^{-1}(\Omega))$ for $z \in B(p_{m,k}^{H},R)$ and $p>2$ an upper bound for $L_{2,p}$ follows directly from the estimate for $L_{p}$. If we consider $p=2$ the Lipschitz constant $L_{2}$ depends in general on a Sobolev embedding constant (see for instance \cite{CaToUr09, Ortner09,VerPat05}). A simple procedure to obtain an upper bound for this Sobolev embedding constant is described in \cite{Plu01,Ortner09}.\\
Finally, we propose a method for approximating $\beta_{2,p}$ and $\beta_{2}$. We present the approach for $\beta_{2,p}$ but it is identically applicable to $\beta_{2}$. Inspired by the idea in \cite{WirSorHaa12} to employ a matrix-DEIM approximation of the Jacobian for the computation of the Lipschitz constant of the considered nonlinear operator, we propose to use the adaptive EPM to approximate $\beta_{2,p}$. Precisely, we use the a posteriori error bound for the EPM derived in Proposition \ref{apost-epm}, to find $k'$ such that $P_{k'}^{L'}[F^{\prime}(p_{m,k}^{H})]$ approximates $F^{\prime}(p_{m,k}^{H})$ up to a given tolerance and define
\begin{align}
\label{inf-sup-apost-appro} \beta_{2,p}^{\text{\tiny app}} &:= \underset{|w^{H\times h}|_{H^{1}(\Omega)} \neq 0}{\underset{w^{H\times h} \in V^{H\times h}}{\inf}} \underset{|v^{H\times h}|_{H^{1}(\Omega)} \neq 0}{\underset{v^{H\times h} \in V^{H\times h}}{\sup}} \frac{\langle P_{k'}^{L'}[F^{\prime}(p_{m,k}^{H})] w^{H\times h}, v^{H\times h} \rangle}{| w^{H\times h}|_{H^{1}(\Omega)} | v^{H\times h}|_{H^{1}(\Omega)}}.
\end{align}
$\beta_{2,p}^{\text{\tiny app}}$ equals the smallest singular value of the Jacobian associated with $\langle P_{k'}^{L'}[F^{\prime}(p_{m,k}^{H})] w^{H\times h}, v^{H\times h} \rangle$. Thus, we determine the latter to compute $\beta_{2,p}^{\text{\tiny app}}$. Theorem \ref{apriori-epm} yields the convergence of $P_{k'}^{L'}[F^{\prime}(p_{m,k}^{H})]$ to $F^{\prime}(p_{m,k}^{H})$ as $k' \rightarrow K$, which implies $\beta_{2,p}^{\text{\tiny app}} \rightarrow \beta_{2,p}$ as $k' \rightarrow K$. Although we therefore expect $\beta_{2,p}^{\text{\tiny app}}$ to be a very good approximation of $\beta_{2,p}$, which is demonstrated by the numerical experiments in \S \ref{numerics_nonlin}, we note that it cannot be expected that $\beta_{2,p}^{\text{\tiny app}}$ provides a lower bound for $\beta_{2,p}$. 




\section{Numerical Experiments}\label{numerics_nonlin}

In this section we demonstrate the applicability of the RB-HMR approach using the adaptive EPM to nonlinear PDEs by verifying both its good approximation properties and computational efficiency. Moreover, we analyze the effectivity of the a posteriori error estimator derived in \S \ref{sect-apostest} and validate the a priori and a posteriori bounds for the adaptive EPM stated in Theorem \ref{apriori-epm} and Proposition \ref{apost-epm}. For this purpose we consider the following model problem, which is inspired by the model for immiscible two-phase flow in porous media studied in \cite{Michel2003}.
\begin{eqnarray}
\label{model problem_num} \text{Find} \enspace p \in H^{1}_{0}(\Omega): \int_{\Omega} d(p) \nabla p \cdot \nabla v \,  dx dy = \int_{\Omega} s v \, dx dy \quad \forall v\in H^{1}_{0}(\Omega),\\
\label{d}
\text{where} \enspace s \in L^{2}(\Omega), \quad \text{and} \quad d(p) := \frac{36}{c_{4}} \frac{p^{2}(1 - p)^{2}}{(p^{3} + \frac{12}{c_{4}}(1-p)^{3})^{2}} + c_{0}, \qquad \text{for constants} \enspace c_{0}, c_{4} > 0.  
\end{eqnarray}
We consider various functions $s$ in the numerical experiments and specify the function prescribed in each test case at the beginning of the respective subsection. As $c_{0}>0$ ensures uniform ellipticity and $d(p)$, $d^{\prime}(p)$ and $d^{\prime\prime}(p)$ are bounded in the relevant regions, we have that problem \eqref{model problem_num}   is well-posed \cite{CalRap1997}. Existence of a (discrete) reduced RB-HMR solution follows from Brouwer's fixed point theorem. We note that the structure of \eqref{model problem_num} necessitates to consider $F: W^{1,p}_{0}(\Omega) \rightarrow W^{-1,p}(\Omega)$ for $p>2$ which in turn requires that the HMR basis functions fulfill $\phi_{k} \in W^{1,p}(\omega)\cap H^{1}_{0}(\omega)$, $k=1,...,m$. This improved regularity for solutions of \eqref{1D_prob_quad_para_nonlin} can be proven with standard arguments (see for instance \cite{LadUra68}). For further details on well-posedness issues of problem \eqref{model problem_num} in the context of RB-HMR and the corresponding parameter dependent lower dimensional problem \eqref{1D_prob_quad_para_nonlin}  we refer to \cite{Sme13}.

In the first test case we prescribe the analytical solution of test case 1 in \cite{OS10} to compare the convergence rates of the RB-HMR approach for linear and nonlinear problems. Also in the nonlinear case the RB-HMR approach converges exponentially fast in the model order $m$, regardless whether the adaptive EPM is applied or not. However, the convergence rate is worse than for the linear problem. In the other test case we prescribe a discontinuous source term $s$ resulting in a solution with little spatial regularity both in the dominant and transverse direction. Still, we observe an exponential convergence rate of the RB-HMR approach using the adaptive EPM in the model order $m$. Both test cases have been computed employing linear FE in $x$- and $y$-direction, i.e.
$ 	
X^{H}= \left \{ v^{H} \in C^{0}(\Omega_{1D}) \,:\, v^{H}|_{\mathcal{T}_{i}} \in \mathbb{P}^{1}_{1}(\mathcal{T}_{i}), \mathcal{T}_{i} \in \mathcal{T}_{H}\right \}, 
Y^{h} = \left \{ v^{h} \in C^{0}({\omega}) \,:\, v^{h}|_{\tau_{j}} \in \mathbb{P}^{1}_{1}(\tau_{j}), \tau_{j} \in \tau_{h}\right \},
$
and
$
V^{H\times h} =  \{ v^{H \times h} \in C^{0}({\Omega}) \, : \, v^{H \times h}|_{T_{i,j}} \in \mathbb{Q}_{1,1}, T_{i,j} \in {T} \},
$
using equidistant grids in $x$- and $y$-direction. We have used the following quadrature weights in \eqref{quad_formula_nonlin}
\begin{equation}\label{def_weights_nonlin}
\alpha_{1} := \frac{x_{1}^{q}+x_{2}^{q}}{2}-x_{0}, \quad \alpha_{l} := \frac{x_{l+1}^{q} - x_{l-1}^{q}}{2}, \enspace l = 2,...,Q-1, \quad \alpha_{Q}:= x_{1}- \frac{x_{Q-2}^{q}+x_{Q-1}^{q}}{2}, 
\end{equation}
where the quadrature points $x_{l}^{q}$, $l = 1,...,Q$ are expected to be sorted in ascending order. We have only applied a simplified version of Algorithm \ref{adapt-RB-HMR} \textsc{Adaptive-RB-HMR} in the numerical experiments, as we have chosen the number of quadrature points employed in the parameter dependent 1D problem \eqref{disc_1dprob_nonlin} a priori. However, a comparison of the performance of the RB-HMR approach using $1$ or $2$ quadrature points in \eqref{disc_1dprob_nonlin} is provided for the second test case. Furthermore, we have applied the adaptive EPM \ref{adapt-EPM} based on the EIM with $N_{\text{{\tiny max}}}^{\text{{\tiny int}}}=0$ and we thus obtain $k = L$ in \eqref{epm-approx}. To simplify notations we omit the $\sim$ as introduced in Section \ref{sect:EPM_EIM}. We have employed the estimate
\begin{equation}
\int_{\Omega} \nabla \bigl\lbrace\left(d(p_{m,k}^{H}) - d(z)\right) w\bigr\rbrace\nabla v \leq c_{E}(1+c_{F}^{p})^{1/p}\bigl(2c_{2} + c_{3}c_{E}(1+c_{F}^{p})^{1/p} c_{h}|p_{m,k}^{H}|_{H^{1}(\Omega)} \bigr),
\end{equation}
to obtain an estimate for the Lipschitz constant $L_{2,p}$, where $\|d^{\prime}(z)v\| \leq c_{2} \|v\|$, $\|d^{\prime\prime}(z)v\| \leq c_{3} \|v\|$, for $v \in \mathbb{R}^{2}$, $z \in B(p_{m,k}^{H},R)$, and $c_{F}$ and $c_{E}$ have been introduced above. 
Since $d$ is only locally bounded for some choices of $c_{4}$, we computed local approximations of $c_{2}$ and $c_{3}$ by evaluating $d^{\prime}$ and $d^{\prime \prime}$ in the discrete reduced solution $p_{m,k}^{H}$ of \eqref{red_prob_hmr_epm}. Moreover, we have estimated the constants $c$ in $c_{h}$, $(1+c_{F}^{p})^{1/p}$, and $c_{E}$ by $1$, which seems to be a reasonable estimate as for instance the procedure proposed in \cite{Ortner09,Plu01} yields a bound of $1.0856$ for $(1+c_{F}^{p})^{1/p}$ and the value of the sharp bound $c_{E,2}$ stated in \cite{Tal94} is about $1.54$.

Setting $e_{m}^{k}:=  p^{H \times h} - p^{H}_{m,k}$, where $p^{H\times h}$ solves \eqref{truth_nonlin} and $p_{m,k}^{H}$ \eqref{red_prob_hmr_epm}, we define the relative model error in the $H^{1}$-semi norm or $L^{2}$-norm as 
$
|e_{m}^{k}|_{H^{1}}^{rel} := |e_{m}^{k}|_{H^{1}}/|p^{H \times h}|_{H^{1}} \enspace \text{and} \enspace\|e_{m}^{k}\|_{L^{2}(\Omega)}^{rel} := \|e_{m}^{k}\|_{L^{2}(\Omega)}/\|p^{H \times h}\|_{L^{2}(\Omega)}.
$
The relative total error $|e|_{H^{1}}^{rel}$ is either defined as $|e|_{H^{1}}^{rel} := |p - p^{H}_{m,k}|_{H^{1}}/|p|_{H^{1}}$ if the full solution $p$ of \eqref{model problem_num} is available as in test case 1 or as $|e|_{H^{1}}^{rel} := |p_{fine} - p^{H}_{m,k}|_{H^{1}}/|p_{fine}|_{H^{1}}$, where 
$p_{fine}$ denotes a very finely resolved bilinear FE solution. We denote the POD-error associated with the HMR by $e_{m}^{\text{{\tiny POD}}}$ and the POD-error corresponding to the adaptive EPM by $e^{k}_{\text{{\tiny POD}}}$. 
Moreover, we set $\bar{e}_{m}^{L^{2}} := (\sum_{j=m+1}^{M} \| \bar{p}_{j,k}^{H}\|^{2}_{L^{2}(\Omega_{1D})})^{1/2}$ and $e^{k}_{L^{2}} := (\sum_{j=k+1}^{K} \| \int_{{\omega}} \mathcal{I}_{L}[A(p_{m,k}^{H})] \kappa_{k} \|^{2}_{L^{2}(\Omega_{1D})})^{1/2}$, where $M = \dim(\mathcal{M}^{\mathcal{P}}_{\Xi})$, $K = \dim(\mathcal{M}^{\mathcal{A}}_{\Xi})$, and $\mathcal{I}_{L}[\cdot]$ has been defined in \eqref{epm-approx}. 
For the validation of the effectivity of the error bounds, we finally shorten the notation by setting $\|e_{\mbox{{\tiny mod}}}\| :=\| P_{k}^{L}[F(p_{m,k}^{H})] \|$, $\|e_{\mbox{{\tiny EPM}}}\| := \| P_{k'}^{L'}[F(p_{m,k}^{H})] - P_{k}^{L}[F(p_{m,k}^{H})] \|$ and $\|e_{\mbox{{\tiny EPM}}}^{ex}\| := \| F(p_{m,k}^{H}) - P_{k}^{L}[F(p_{m,k}^{H})] \|$ either for the $H^{-1}$- or the $L^{2}$-norm. The implementation of Algorithm \ref{adapt-RB-HMR} \textsc{Adaptive-RB-HMR} has been realized in MATLAB.  All computations have been performed on a computer with an Intel Core i7 (8 cores) with 2.93 GHz.

\begin{figure}[t]
\centering
\subfloat[{\scriptsize $|e_{m}|_{H^{1}}^{rel}$}]{
\includegraphics[scale = 0.3]{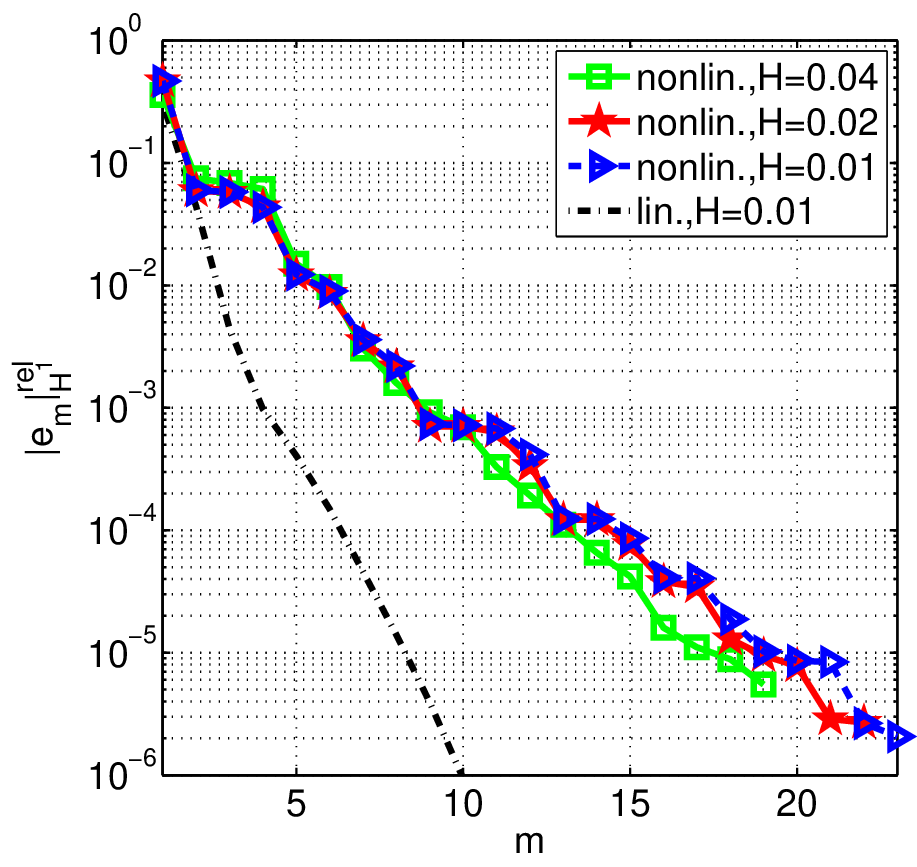} \label{fig4.2a}}
\subfloat[{\scriptsize $|e|_{H^{1}}^{rel}$}]{
\includegraphics[scale = 0.3]{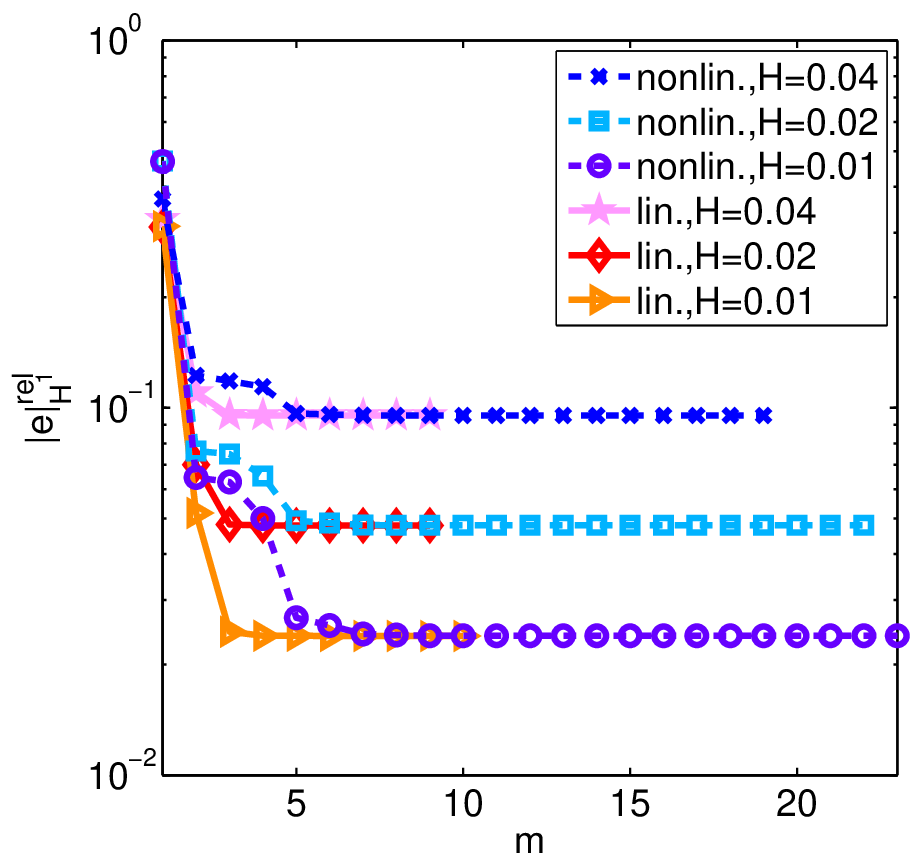}\label{fig4.2b}}
\subfloat[{\scriptsize $|e_{m}|_{H^{1}}^{rel}$ and $|e_{m}^{k}|_{H^{1}}^{rel}$}]{
\includegraphics[scale = 0.3]{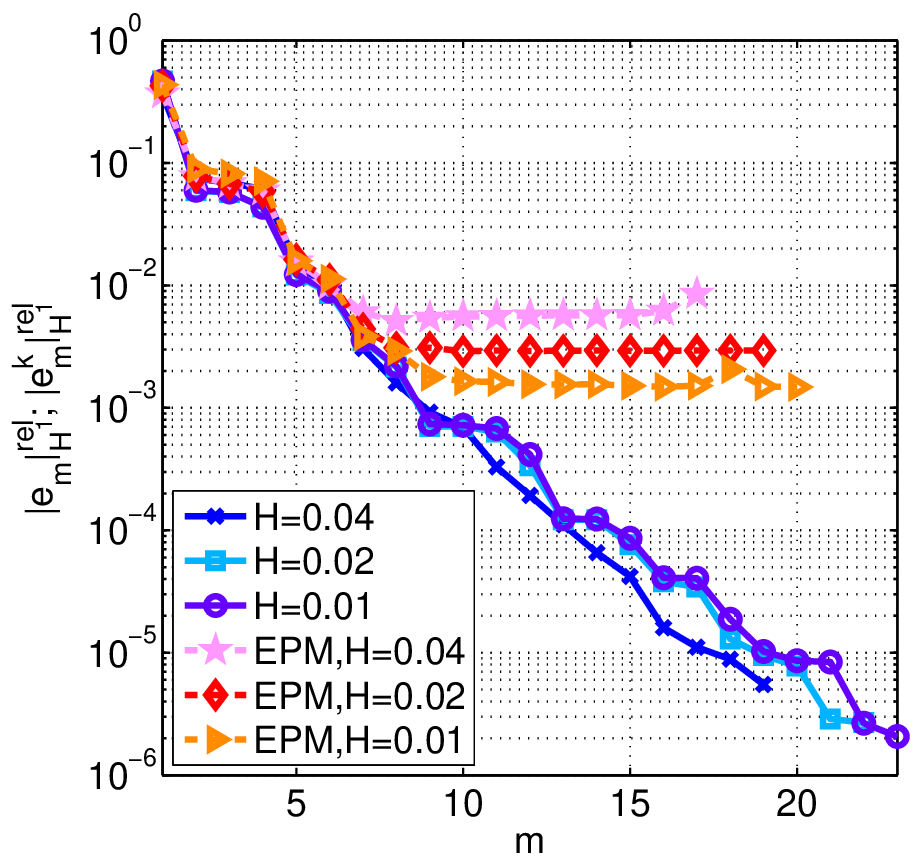}\label{fig4.2c}}
\caption{{\footnotesize Test case 1: Comparison of the behavior of the relative model error $|e_{m}|_{H^{1}(\Omega)}^{rel}= |p^{H\times h} - p_{m}^{H}|_{H^{1}(\Omega)}/|p^{H\times h}|_{H^{1}(\Omega)}$ (A) and the relative total error for $|e|_{H^{1}(\Omega)}^{rel}= |p - p_{m}^{H}|_{H^{1}(\Omega)}/|p|_{H^{1}(\Omega)}$ (B) for the linear problem (test case 1 in \cite{OS10}) and the nonlinear problem \eqref{model problem_num}; Comparison of the behavior of the relative model error when applying the adaptive EPM ($|e_{m}^{k}|_{H^{1}(\Omega)}^{rel}$ ) or not ($|e_{m}|_{H^{1}(\Omega)}^{rel}$) (C).}\label{fig4.2}}
\end{figure}

\begin{figure}[t]
\centering
\subfloat[{\scriptsize $|e_{m}^{k}|_{H^{1}}^{rel}$}]{
\includegraphics[scale = 0.35]{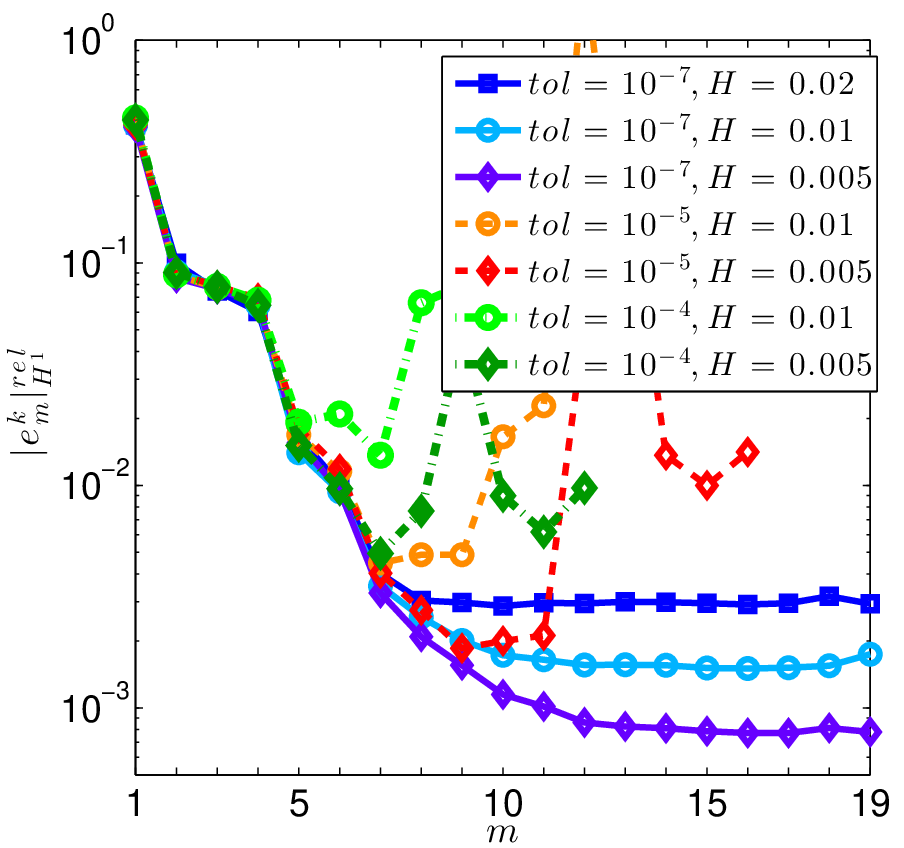} \label{fig4.3a}}
\subfloat[{\scriptsize $|e|_{H^{1}}^{rel}$}]{
\includegraphics[scale = 0.35]{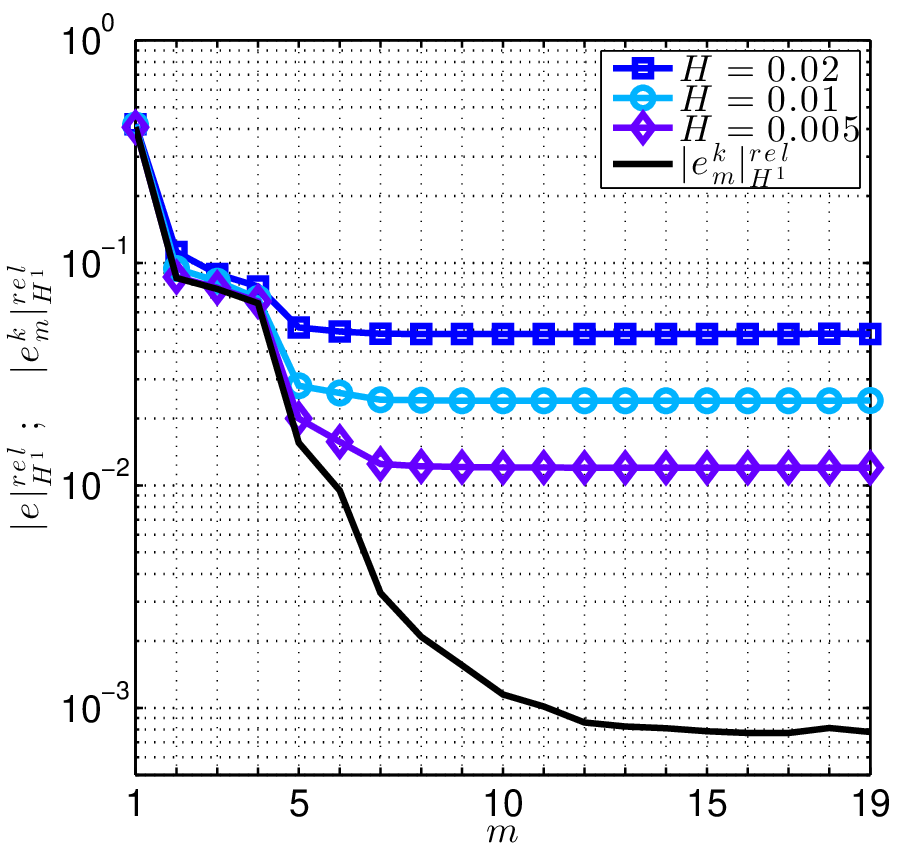}\label{fig4.3b}}
\subfloat[{\scriptsize $\|e_{m}^{k}\|_{L^{2}(\Omega)}^{rel}, e_{m}^{\mbox{{\tiny POD}}},\bar{e}_{m}^{L^2}$}]{
\includegraphics[scale = 0.35]{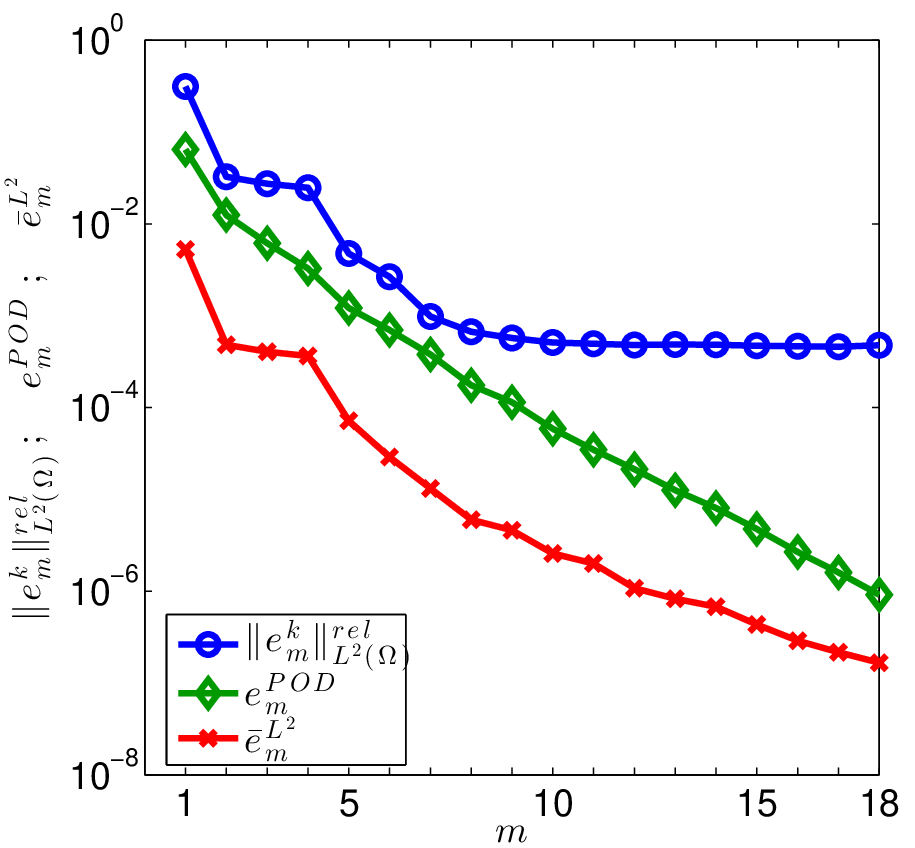}\label{fig4.3c}}\\
\subfloat[{\scriptsize $\lambda_{m}$, $\|\bar{p}_{m,k}^{H}\|_{L^{2}(\Omega_{1D})}^{2}$}]{
\includegraphics[scale = 0.35]{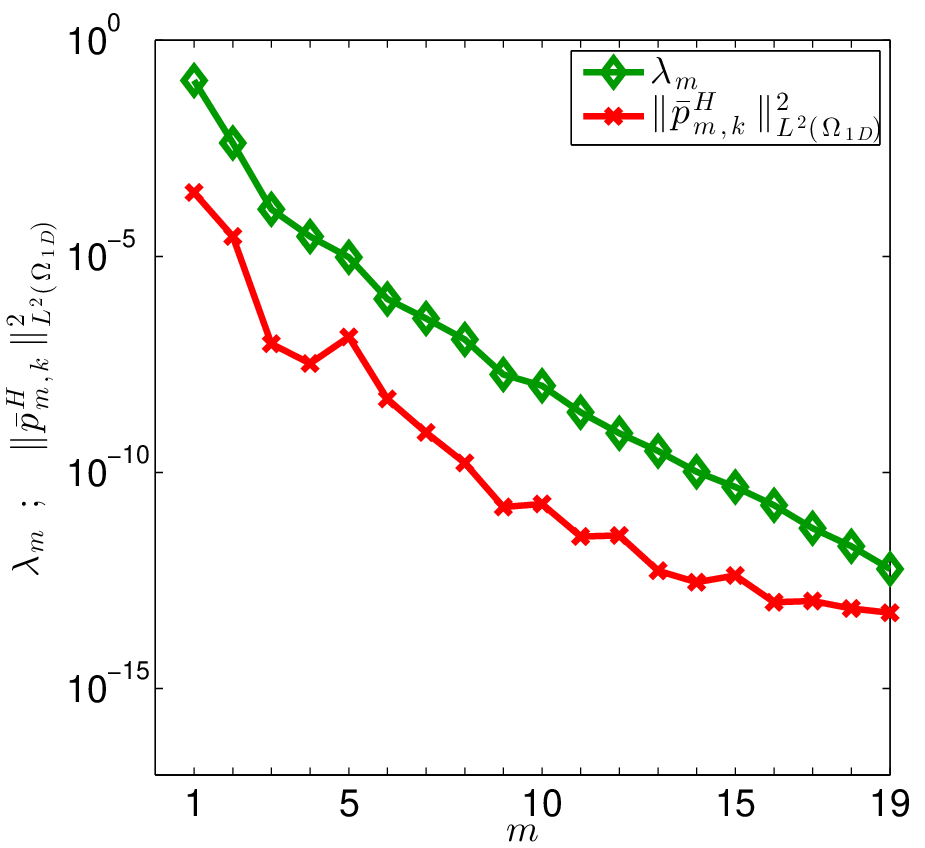}\label{fig4.3d}}
\subfloat[{\scriptsize $|e_{m}^{k}|_{H^{1}}^{rel}$-landscape}]{
\includegraphics[scale = 0.35]{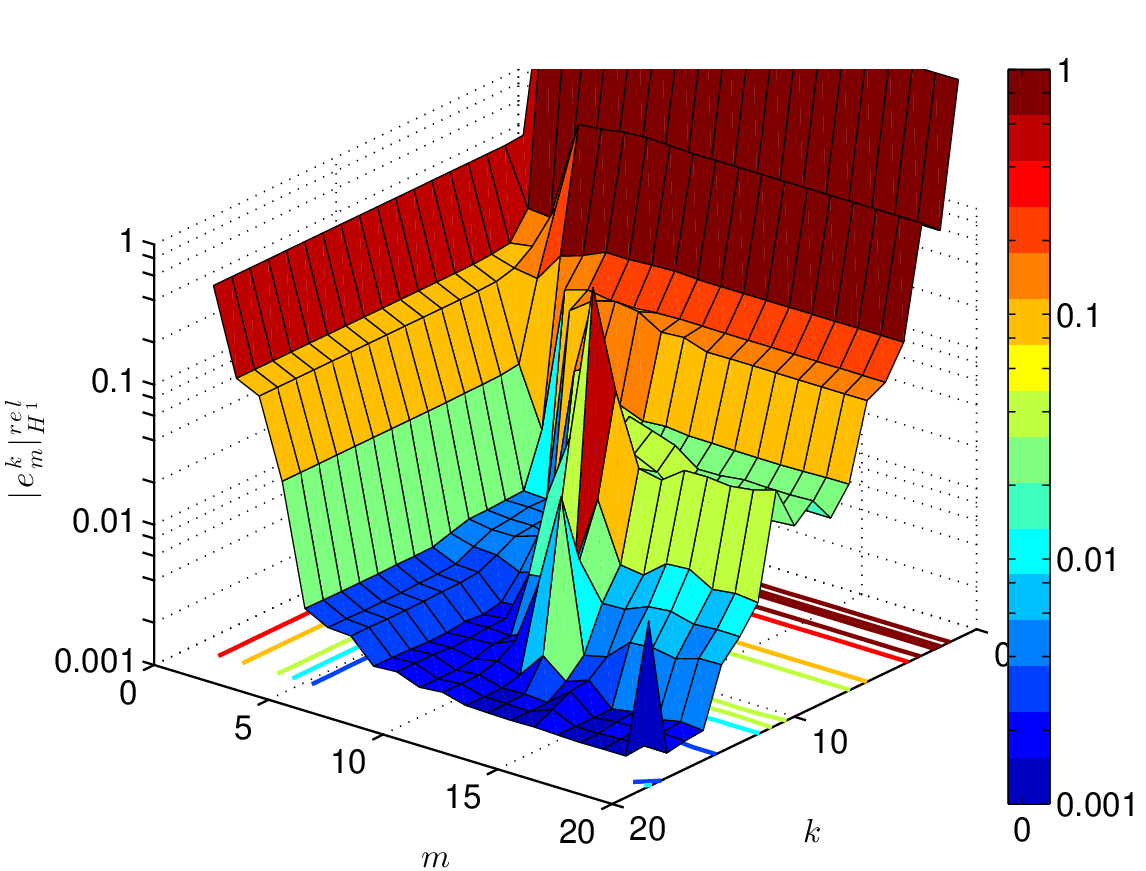}\label{fig4.3e}}
\caption{{\footnotesize Test case 1: Comparison of the convergence behavior of $|e_{m}^{k}|_{H^{1}}^{rel}$ for EPM-tolerances $\varepsilon_{\mbox{{\tiny tol}}}^{\text{{\tiny EPM}}} = 10^{-4}, 10^{-5}, 10^{-7}$ (A), and for $\varepsilon_{\mbox{{\tiny tol}}}^{\text{{\tiny EPM}}} = 10^{-7}$: $|e|_{H^{1}}^{rel}$ for different mesh sizes and $|e_{m}^{k}|_{H^{1}}^{rel}$ for $H=0.005$ (B), $\|e_{m}^{k}\|_{L^{2}(\Omega)}^{rel}, e_{m}^{\mbox{{\tiny POD}}}$ and $\bar{e}_{m}^{L^2}$ (C) and $\lambda_{m}$ and $\|\bar{p}_{m,k}^{H}\|_{L^{2}(\Omega_{1D})}^{2}$ (D) for $H=0.005$; Convergence behavior of $|e_{m}^{k}|_{H^{1}}^{rel}$ for increasing model order $m$ and collateral basis size $k$ for $H = 0.01$ (E); all plots $N_{H'}=10$.}\label{fig4.3}}
\vspace{-10pt}
\end{figure}

\subsection*{Test case 1}
First, we investigate the convergence behavior of the RB-HMR approach for an analytical solution $p(x,y) = y^{2}(1-y)^{2}(0.75-y)x(2-x)\exp(\sin(2\pi x))$, which has already been 
considered in test case 1 in \cite{OS10} and originally in \cite{ErnPerVen2008,PerErnVen10}  solving the Poisson problem. We choose $\Omega = (0,2) \times (0,1)$ and $c_{0}=0.1$ and $c_{4}=36$ in \eqref{d}. We compare  the convergence behavior of the relative model error $|e_{m}|_{H^{1}(\Omega)}^{rel}= |p^{H\times h} - p_{m}^{H}|_{H^{1}(\Omega)}/|p^{H\times h}|_{H^{1}(\Omega)}$  for the nonlinear case, where $p_{m}^{H}$ is the solution of the discrete reduced problem  (discretization of \eqref{reprob_nonlin}) with
the linear case where $p_{m}^{H}$ is an RB-HMR approximation of the solution of a Poisson problem. Note that both the nonlinear problem and the Poisson problem have the same analytical solution $p(x,y)$, where we refer to \cite{OS10} for details on the linear problem and the respective RB-HMR approximation.

We observe an exponential convergence rate of $|e_{m}|_{H^{1}(\Omega)}^{rel}$ for the nonlinear problem \eqref{model problem_num}, which is worse than the one for the Poisson problem (Fig.~\ref{fig4.2a}). Nevertheless, also for the nonlinear case still $9$ basis functions are sufficient to achieve $|e_{m}|_{H^{1}(\Omega)}^{rel} \leq 10^{-3}$ (Fig.~\ref{fig4.2a}). Taking also into account the discretization error and hence considering the relative total error $| e |_{H^{1}}^{rel} = |p - p_{m}^{H}|_{H^{1}}/|p|_{H^{1}(\Omega)}$ we observe that at least for the considered mesh sizes the effects of the detoriation of the convergence rate of the RB-HMR due to the nonlinearity on the behavior of the total error are rather small (Fig.~\ref{fig4.2b}). That is because the discretization error is dominating over the model error in this example already for an RB-HMR approximation using only a small number of basis functions (Fig.~\ref{fig4.2b}). Applying the adaptive EPM preserves the convergence rate of the model error $|e_{m}|_{H^{1}(\Omega)}^{rel}$ (Fig.~\ref{fig4.2c}) until a so-called EPM-plateau (see \cite{Tonn2011,DroHaaOhl2012} for the EIM-plateau) is reached. The model error enters an EPM-plateau if the approximation properties of the collateral basis space $W_{k}$ prevent a further reduction of the model error, i.e. $k$ is chosen too small compared to $m$, and the nonlinear operator is hence not approximated accurate enough. Our experiments showed that the tolerance of the POD for the adaptive EPM $\varepsilon_{\mbox{{\tiny tol}}}^{\text{{\tiny EPM}}}$ should be set to
$\varepsilon_{\mbox{{\tiny tol}}}^{\text{{\tiny EPM}}} = c_{tol} \varepsilon_{\mbox{{\tiny tol}}}^{\text{{\tiny HMR}}}$ with $c_{tol} \in [10^{-4},10^{-3}]$, to ensure that $k$ is chosen large enough. However, even if $W_{k}$ is spanned by all linear independent functions $\mathcal{A}^{h}(\mu) \in \mathcal{M}^{\mathcal{A}}_{\Xi}$ \eqref{disc_mani}, a small error and thus a EPM plateau cannot be avoided due to the necessary projection of the snapshots onto a discrete space and other numerical constraints. Note that the level of the EPM-plateau becomes smaller for decreasing $H$ and lies for all considered mesh sizes well below the total error $|e|_{H^{1}}^{rel}$ (Fig.~\ref{fig4.2b},\ref{fig4.2c}).  Finally, we remark that in all computations for the plots in Fig.~\ref{fig4.2} we used the exact error in the application of the Algorithm \ref{adapt-RB-HMR} \textsc{Adaptive-RB-HMR} (Fig.~\ref{fig4.2c}) to assess only the influence of the nonlinearity in Fig.~\ref{fig4.2a} and Fig.~\ref{fig4.2b} or the application of the adaptive EPM in Fig.~\ref{fig4.2c}. 

Comparing the convergence behavior of $|e_{m}^{k}|_{H^{1}(\Omega)}^{rel}$ for different POD-tolerances for the adaptive EPM in Fig.~\ref{fig4.3a}, we observe that for $\varepsilon_{\mbox{{\tiny tol}}}^{\text{{\tiny EPM}}} = 10^{-4}, 10^{-5}$ the error can even increase when entering the EPM-plateau. For $\varepsilon_{\mbox{{\tiny tol}}}^{\text{{\tiny EPM}}} = 10^{-7}$ the error stagnates in the EPM-plateau and the level of the plateau decreases uniformly for dropping mesh sizes  (Fig.~\ref{fig4.3a}). Fig.~\ref{fig4.3e} illustrates the error convergence of $|e_{m}^{k}|_{H^{1}(\Omega)}^{rel}$ for a simultaneous increase of the model order $m$ and collateral basis size $k$ for $H = 0.01$. Again we see that for small $k$ the scheme might even get unstable if  $m$ exceeds a certain limit, which is however not the case for higher values of $k$. Moreover, we observe that if the approximation of the nonlinear operator is good enough, a further increase of $k$ does not reduce $|e_{m}^{k}|_{H^{1}(\Omega)}^{rel}$ if $m$ is kept fixed. Choosing $\varepsilon_{\mbox{{\tiny tol}}}^{\text{{\tiny EPM}}} = 10^{-7}$ and thereby ensuring that the approximation properties of $W_{k}$ are sufficient, we finally see that for the considered mesh sizes the EPM-plateau has no effect on the relative total error $|e|_{H^{1}(\Omega)}^{rel}$ (Fig.~\ref{fig4.3b}). If we compare $\|e_{m}\|_{L^{2}(\Omega)}^{rel}, e_{m}^{\mbox{{\tiny POD}}}$ and $\bar{e}_{m}^{L^2}$ for $H = 0.005$ in Fig.~\ref{fig4.3c}, we detect that all three quantities exhibit the same exponential convergence rate until $\|e_{m}\|_{L^{2}(\Omega)}^{rel}$ reaches the EPM-plateau. As also the convergence behavior of the eigenvalues of the POD $\lambda_{m}$ and of the coefficients $\|\bar{p}_{m,k}^{H}\|_{L^{2}(\Omega_{1D})}^{2}$ 
coincide (Fig.~\ref{fig4.3d}), we conclude that for the present test case the convergence behavior of the POD transfers to the coefficients $\|\bar{p}_{m,k}^{H}\|_{L^{2}(\Omega_{1D})}^{2}$, $\bar{e}_{m}^{L^2}$ and to the model error $\|e_{m}^{k}\|_{L^{2}(\Omega)}^{rel}$,$|e_{m}^{k}|_{H^{1}(\Omega)}^{rel}$. Thus, we infer that the discrete solution manifold $\mathcal{M}^{\mathcal{P}}_{\Xi}$ \eqref{manifold-nonlin} and the reference solution $p^{H\times h}$ are approximated with the same approximation accuracy by the reduction space $Y_{m}$. Note that thanks to the coincidence of the convergence rates of $\lambda_{m}$ and $\|\bar{p}_{m,k}^{H}\|_{L^{2}(\Omega_{1D})}^{2}$ (Fig.~\ref{fig4.3d}), the \textsc{QP-Indicator} introduced in \S \ref{adapt-RB-HMR-epm} would not have increased the number of quadrature points used in \eqref{1D_prob_quad_para_nonlin}. \\

\begin{figure}[t]
\centering
\includegraphics[scale = 0.35]{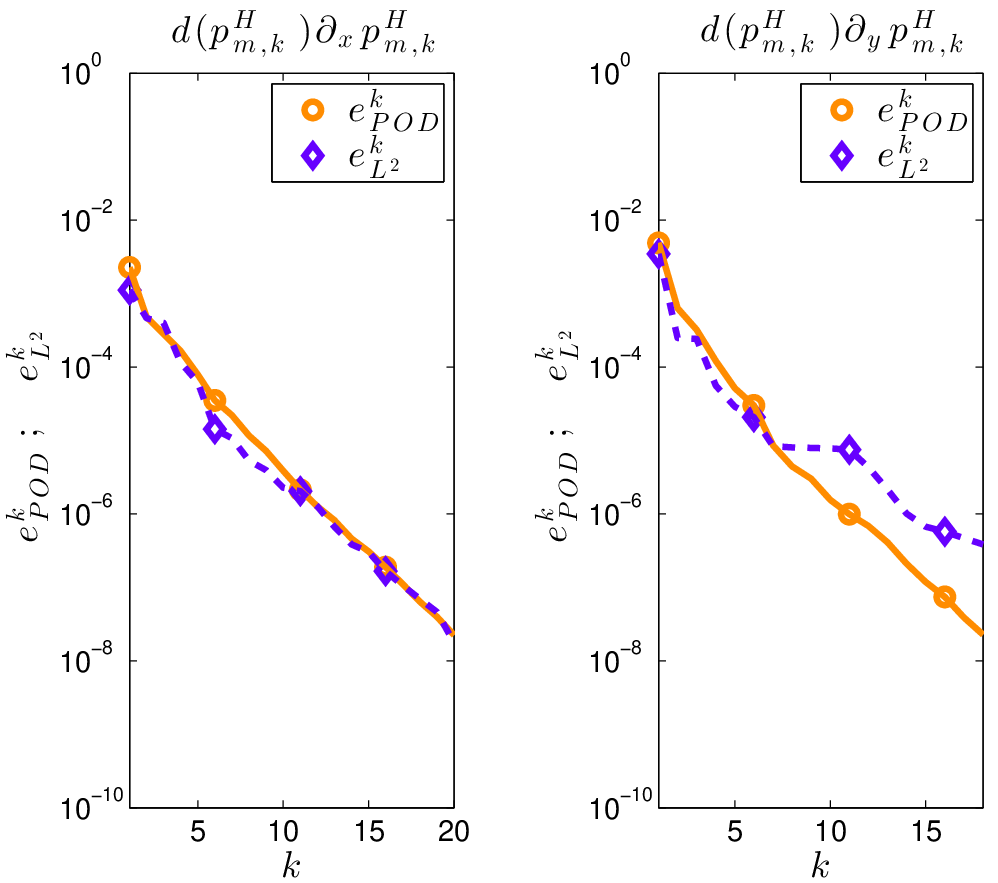}\quad \includegraphics[scale = 0.35]{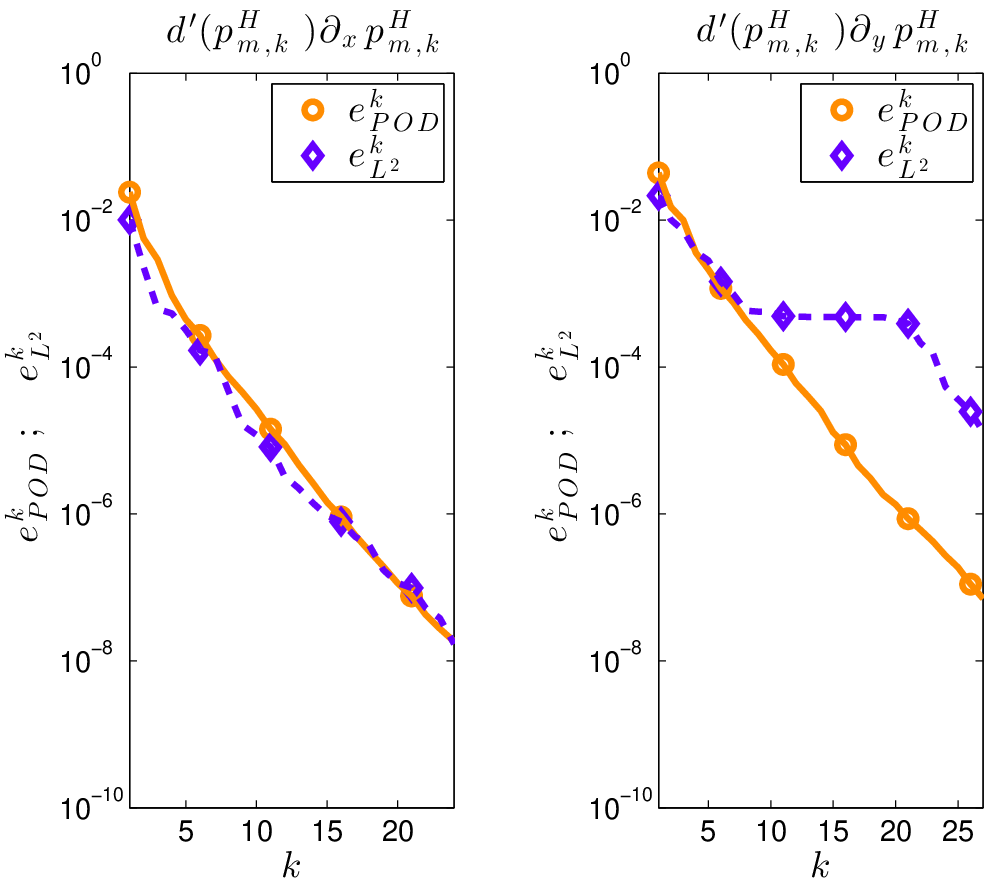} \quad \includegraphics[scale = 0.35]{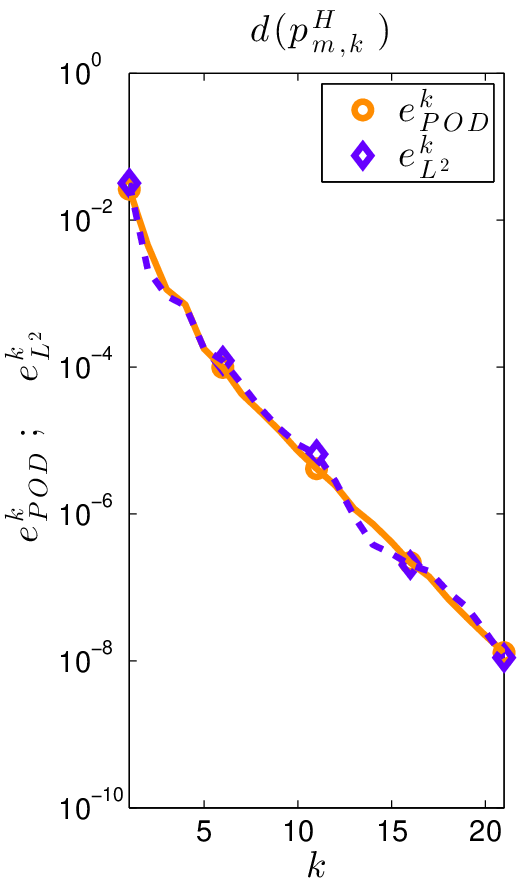}
\caption{{\footnotesize Test case 1: Comparison of $e_{\mbox{{\tiny POD}}}^{k}$ and $e^{k}_{L^{2}}$ for $H=0.005$, $N_{H'}=10$, and $\varepsilon_{\mbox{{\tiny tol}}}^{\text{{\tiny EPM}}} = 10^{-7}$.}\label{fig4.4}}
\end{figure}
To assess the approximation quality of the collateral basis spaces, we finally compare in Fig.~\ref{fig4.4} the convergence rates of the respective POD-error $e^{k}_{\mbox{{\tiny POD}}}$ and $e^{k}_{L^{2}}$. We observe that the rates for the approximation of $d(p_{m,k}^{H})\partial_{x} p_{m,k}^{H}$, $d^{\prime}(p_{m,k}^{H})\partial_{x} p_{m,k}^{H}$ and $d(p_{m,k}^{H})$ coincide perfectly. The deviation for the other two might be explained by the fact, that we have projected the snapshots corresponding to $d(p_{m,k}^{H})\partial_{y} p_{m,k}^{H}$ and $d^{\prime}(p_{m,k}^{H})\partial_{y} p_{m,k}^{H}$
onto the space of piecewise constant functions to account for the structure of the nonlinear operator. This yields a worse convergence behavior for decreasing $h$ as the projection onto the space $Y^{h}$ we have employed for the others.  As apart from this deviation the convergence rates coincide, we nevertheless conclude that the nonlinear operator $A(p^{H\times h})$, its Fr\'{e}chet derivative $A^{\prime}(p^{H\times h})$, and the discrete manifolds of operator evaluations are approximated with the same quality. \\

\begin{table}[t]
\center
\begin{tabular}{|c|c|c|c|c|}
\hline $m$ &$\varepsilon_{\mbox{{\tiny tol}}}^{\text{{\tiny EPM}}} = 10^{-5}$ (app) &$\varepsilon_{\mbox{{\tiny tol}}}^{\text{{\tiny EPM}}} = 10^{-5}$ (ex) & $\varepsilon_{\mbox{{\tiny tol}}}^{\text{{\tiny EPM}}} = 10^{-7}$ (app) & $\varepsilon_{\mbox{{\tiny tol}}}^{\text{{\tiny EPM}}} = 10^{-7}$ (ex)\\
\hline 1 &  0.0984390 & 0.0989506 & 0.0984566 & 0.0989346 \\
\hline 2 &  0.0982283 & 0.0982740 & 0.0981483 & 0.0982654 \\
\hline 3 & 0.0981121 & 0.0982542 & 0.0981655 & 0.0982556 \\
\hline 4 & 0.0981349 & 0.0982402 & 0.0982860 & 0.0982506 \\
\hline 5 & 0.0980797 & 0.0981840 & 0.0982739 & 0.0981820 \\
\hline 10 & 0.0980739 & 0.0981621 & 0.0980968 & 0.0981864\\
\hline 12 & 0.0982229 & 0.0981288 & 0.0982790  & 0.0981867 \\
\hline 15&  0.0981518 & 0.0981433 & 0.0980749 & 0.0981867\\
\hline 
\end{tabular}
\caption{{\footnotesize Test case 1: Comparison of the exact inf-sup stability factor (ex) with its approximate value (app) for $H = 0.02$ and different tolerances $\varepsilon_{\mbox{{\tiny tol}}}^{\text{{\tiny EPM}}}$ in the adaptive EPM and increasing model order $m$.}\label{compare-inf-sup}}
\end{table}

\begin{figure}[t]
\centering
\subfloat[{\scriptsize $\Delta_{m}^{k}, |e^{k}_{m}|_{H^{1}}$}]{
\includegraphics[scale = 0.35]{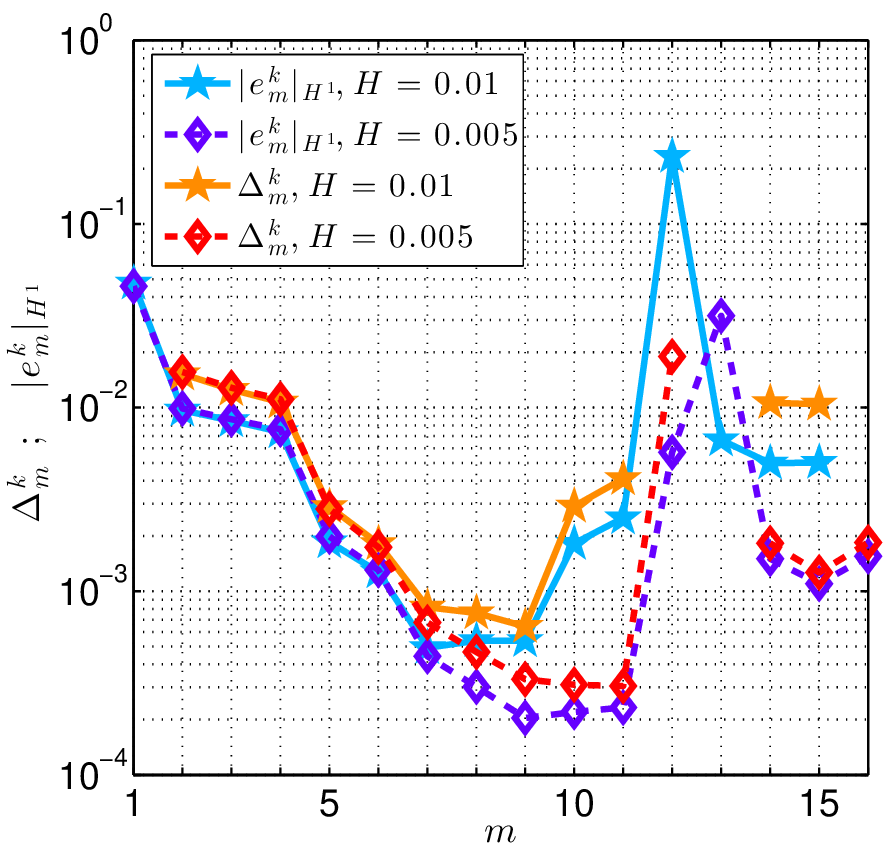}\label{fig4.5a}}
\subfloat[{\scriptsize $\Delta_{m}^{k},|e_{m}^{k}|_{H^{1}}$}]{
\includegraphics[scale = 0.35]{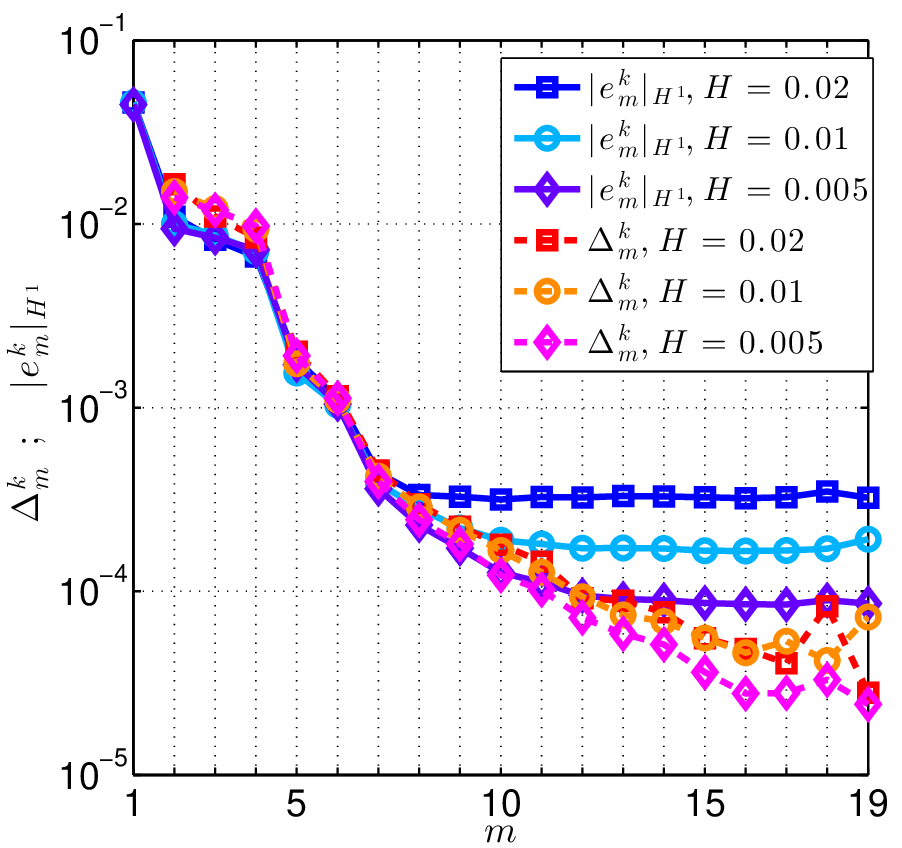} \label{fig4.5c}}
\subfloat[{\scriptsize $e_{EPM}, e_{EPM}^{ex}$ }]{
\includegraphics[scale = 0.35]{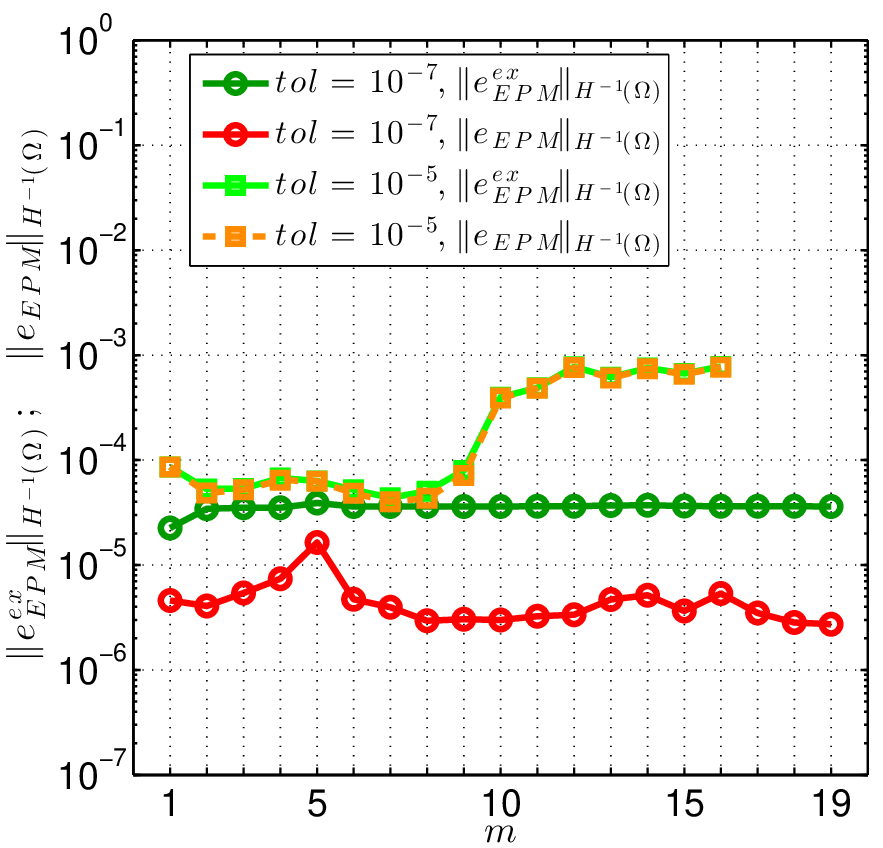}\label{fig4.5d}} \\
\subfloat[{\scriptsize $e_{mod}, e_{EPM}$}]{
\includegraphics[scale = 0.35]{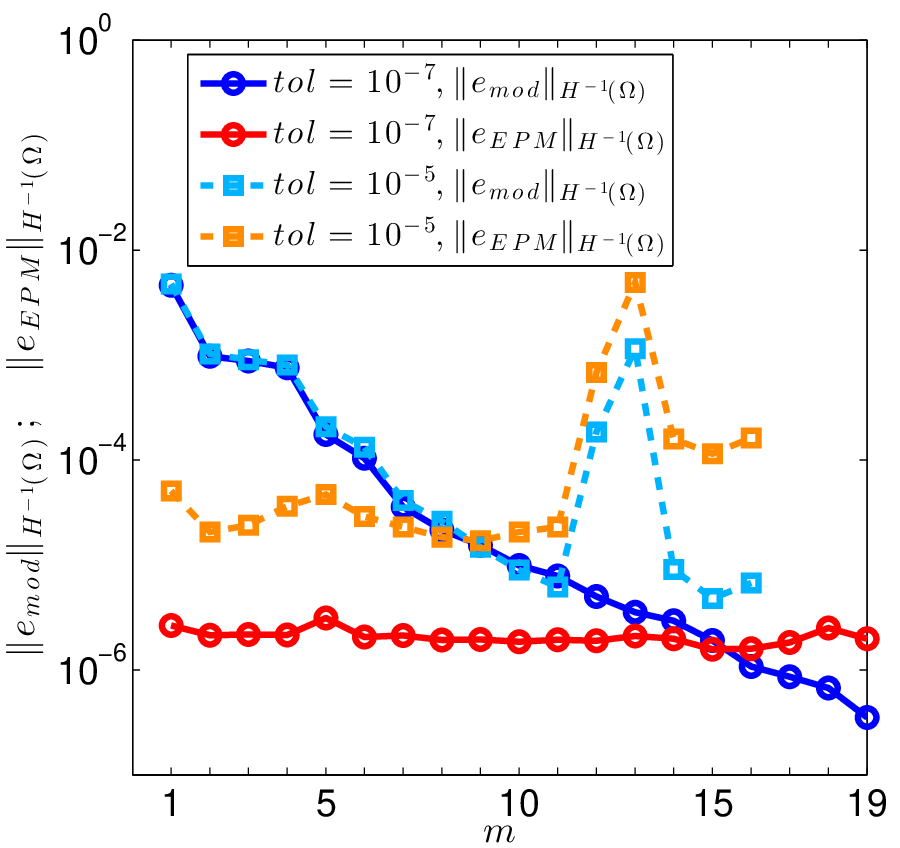}\label{fig4.5e}}
\subfloat[{\scriptsize $\Delta_{m}^{k},\tilde{\Delta}_{m}^{k}, |e^{k}_{m}|_{H^{1}}$}]{
\includegraphics[scale = 0.35]{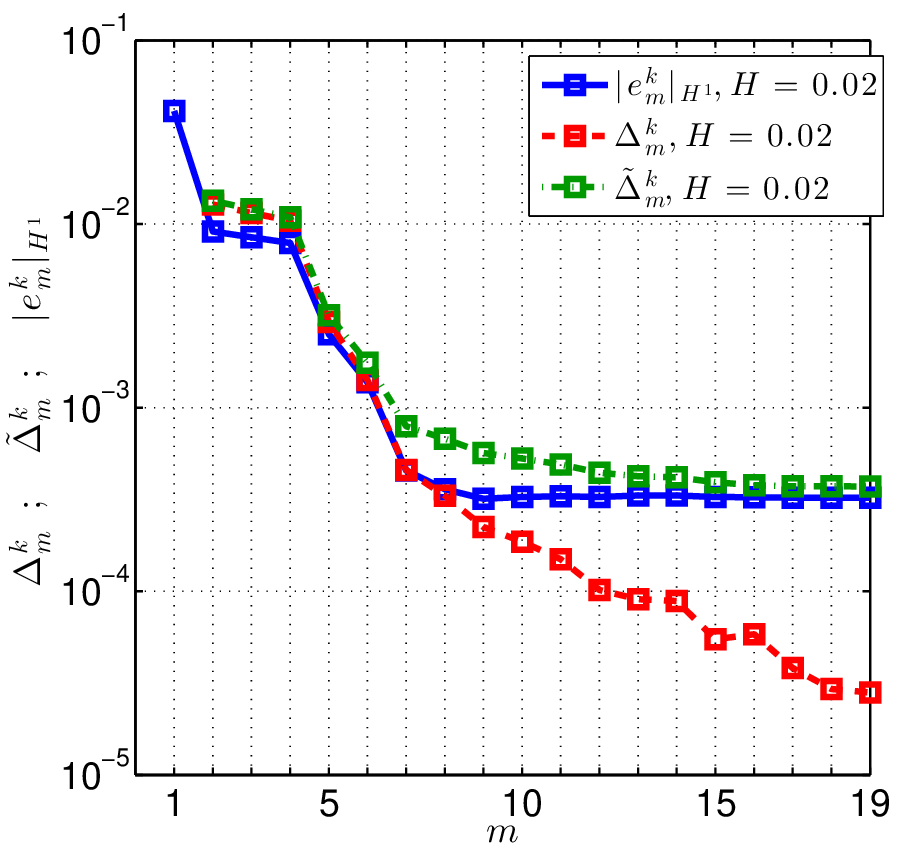}\label{fig4.5b}}
\subfloat[{\scriptsize $\|e_{EPM}\|_{L^{2}}$, $e^{k}_{POD}$ }]{
\includegraphics[scale = 0.35]{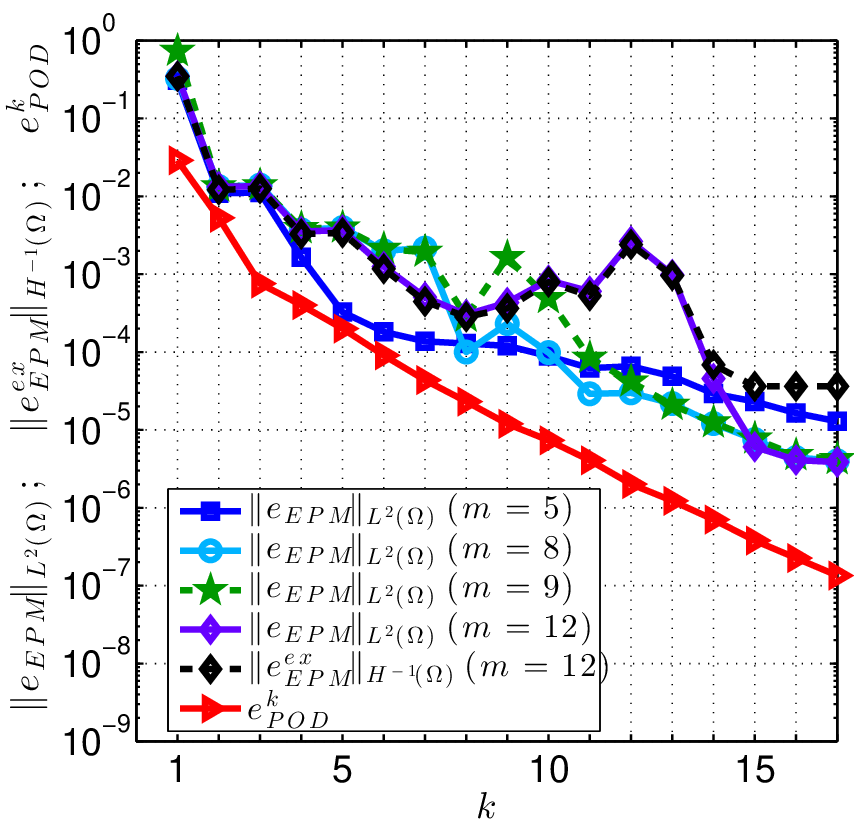}\label{fig4.5f}}
\caption{{\footnotesize Test case 1: Comparison of the a posteriori error estimator $\Delta_{m}^{k}$ with $|e_{m}^{k}|_{H^{1}(\Omega)}$ for decreasing $H$ for $\varepsilon_{\mbox{{\tiny tol}}}^{\text{{\tiny EPM}}}=10^{-5}$ (A) and $\varepsilon_{\mbox{{\tiny tol}}}^{\text{{\tiny EPM}}}=10^{-7}$ (B) . Comparison of $\|e_{\mbox{{\tiny EPM}}}\|_{H^{-1}(\Omega)}$ and $\|e_{\mbox{{\tiny EPM}}}^{\mbox{{\tiny ex}}}\|_{H^{-1}(\Omega)}$ for $\varepsilon_{\mbox{{\tiny tol}}}^{\text{{\tiny EPM}}}=10^{-5},10^{-7}$ and $H =0.02$ (C), and $\|e_{\mbox{{\tiny mod}}}\|_{H^{-1}(\Omega)}$ and $\|e_{\mbox{{\tiny EPM}}}\|_{H^{-1}(\Omega)}$ for $\varepsilon_{\mbox{{\tiny tol}}}^{\text{{\tiny EPM}}}=10^{-5},10^{-7}$ and $H =0.005$ (D). Comparison of  $\Delta_{m}^{k}$ with $\tilde{\Delta}_{m}^{k}$ and $|e_{m}^{k}|_{H^{1}(\Omega)}$ for $\varepsilon_{\mbox{{\tiny tol}}}^{\text{{\tiny EPM}}}=10^{-7}$ and $H=0.02$ (E), and $\|e_{\mbox{{\tiny EPM}}}\|_{L^{2}(\Omega)}$, $\|e_{\mbox{{\tiny EPM}}}^{\mbox{{\tiny ex}}}\|_{H^{-1}(\Omega)}$, and $e^{k}_{\mbox{{\tiny POD}}}$ for $\varepsilon_{\mbox{{\tiny tol}}}^{\text{{\tiny EPM}}}=10^{-7}$ and $H =0.02$ (F).}\label{fig4.5}}
\end{figure}
\begin{figure}[t]
\vspace{-10pt}
\center
\includegraphics[scale = 0.6]{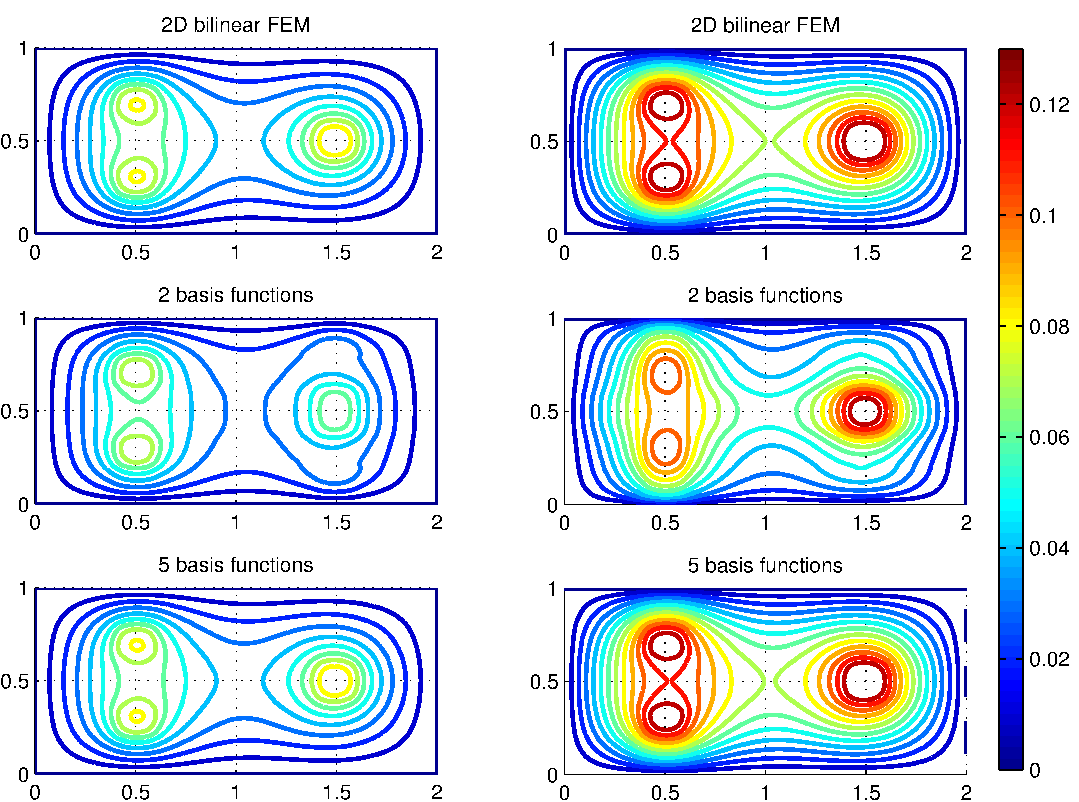}  
\caption{{\footnotesize Test case 2: Comparison from top to bottom: the reference 2D bilinear FE solution $p^{H \times h}$ \eqref{truth_nonlin} and the discrete reduced solution $p_{m,k}^{H}$ using 2 and 5 basis functions for $c_{0}=0.15$ (left) and $c_{0}=0.075$ (right); $c_{4} = 12$, $N_{H}= 400, n_{h}=200, N_{H'} = 10$.}
\label{fig4.7}}
\vspace{-10pt}
\end{figure}

Next, we investigate the effectivity of the a posteriori error estimators derived in \S \ref{sect-apostest}. For the present test case we obtained the following approximate values of the inf-sup stability factor
$
\beta_{2,p}^{\text{\tiny app}} \approx 0.09820 \enspace (H = 0.02)$, $\beta_{2,p}^{\text{\tiny app}} \approx 0.09736 \enspace (H = 0.01)$, $\beta_{2,p}^{\text{\tiny app}} \approx 0.09637 \enspace (H = 0.005),
$
which seems consistent with the considered choice of the ellipticity constant $c_{0}=0.1$. A comparison of the approximate values with the exact inf-sup stability factor in Tab.~\ref{compare-inf-sup} shows that the approximation procedure proposed in \S \ref{subsect_constants} yields a very accurate approximation but does in general not provide a lower bound for the inf-sup stability factor. 

We obtain $\tau_{m,2}^{k} < 1$ for $m\geq 2$ for $\varepsilon_{\mbox{{\tiny tol}}}^{\text{{\tiny EPM}}}=10^{-7}$ for all considered discretizations and for $\varepsilon_{\mbox{{\tiny tol}}}^{\text{{\tiny EPM}}}=10^{-5}$ for all $m \geq 2$ except $m=12,13$ for $H=0.01$ and $m=13$ for $H=0.005$ due to the instability in the EPM plateau. It can be seen in Fig.~\ref{fig4.5a} for $\varepsilon_{\mbox{{\tiny tol}}}^{\text{{\tiny EPM}}}=10^{-5}$ that $\Delta^{k}_{m}$ \eqref{delta_BBR_H1} is an upper bound for $|e_{m}^{k}|_{H^{1}(\Omega)}$, which is very sharp as the effectivities vary between $1.1$ and $3.4$ for the considered mesh sizes in Fig.~\ref{fig4.5a}. 

For $\varepsilon_{\mbox{{\tiny tol}}}^{\text{{\tiny EPM}}}=10^{-7}$ we observe in Fig.~\ref{fig4.5c} that $\Delta^{k}_{m}$ is an upper bound for $|e_{m}^{k}|_{H^{1}(\Omega)}$ for $m\leq 7$ ($H=0.02$), for $m\leq 8$ ($0.01$), and $m\leq10$ ($H= 0.005$), respectively. The fact that $\Delta^{k}_{m}$ underestimates the error for higher model orders for $\varepsilon_{\mbox{{\tiny tol}}}^{\text{{\tiny EPM}}}=10^{-7}$ is probably due to the fact that $\|e_{\text{{\tiny EPM}}}^{\text{{\tiny ex}}}\|_{H^{-1}(\Omega)}$ lies above $\|e_{\text{{\tiny EPM}}}\|_{H^{-1}(\Omega)}$ (see Fig.~\ref{fig4.5d}), as the former also takes discretization errors into account which are not included in the latter. Note that this explanation is consistent with the observation that the finer the mesh the higher the model error for which $\Delta^{k}_{m}$ starts to underestimate the error (see Fig.~\ref{fig4.5c}) and the finding that for $H=0.02$ an error estimator $\tilde{\Delta}^{k}_{m}$ in which $\|e_{\text{{\tiny EPM}}}\|_{H^{-1}(\Omega)}$ is replaced by $\|e_{\text{{\tiny EPM}}}^{\text{{\tiny ex}}}\|_{H^{-1}(\Omega)}$ provides an upper bound for  $|e_{m}^{k}|_{H^{1}(\Omega)}$ (see Fig.~\ref{fig4.5b}). For $\varepsilon_{\mbox{{\tiny tol}}}^{\text{{\tiny EPM}}}=10^{-5}$,  $\|e_{\text{{\tiny EPM}}}\|_{H^{-1}(\Omega)}$ and $\|e_{\text{{\tiny EPM}}}^{\text{{\tiny ex}}}\|_{H^{-1}(\Omega)}$ nearly coincide (see Fig.~\ref{fig4.5d}). We therefore conclude that the a posteriori bound for the adaptive EPM \eqref{apost-equation-epm} yields a very good approximation of $\|e_{\text{{\tiny EPM}}}^{\text{{\tiny ex}}}\|_{L^{2}(\Omega)}$ and thus $\|e_{\text{{\tiny EPM}}}^{\text{{\tiny ex}}}\|_{H^{-1}(\Omega)}$ if the discretization error is not dominant. A (standard) term which estimates this discretization error may be added to error estimator but this is beyond the scope of this paper. 

Here, we have set the tolerance for the POD determining the richer collateral basis space $W_{k'}$ to $tol_{k'} \varepsilon_{\mbox{{\tiny tol}}}^{\text{{\tiny EPM}}}$ with $tol_{k'}  =10^{-2}$ (\S \ref{adapt-RB-HMR-epm}). This yielded on average $k'-k \approx 5$ during the adaptive refinement procedure and for the certification for the method we obtained $k'-k \approx 6.33$ for $\varepsilon_{\mbox{{\tiny tol}}}^{\text{{\tiny EPM}}}=10^{-7}$ and $k'-k \approx 11$ for $\varepsilon_{\mbox{{\tiny tol}}}^{\text{{\tiny EPM}}}=10^{-5}$.

We recall that we have assumed \eqref{mod_dom_ei_H1}
\begin{equation}\label{mod_dom_ei_H1_num}
\|e_{\text{{\tiny EPM}}}\|_{H^{-1}(\Omega)} \leq \|e_{\text{{\tiny EPM}}}^{\text{{\tiny ex}}} \|_{H^{-1}(\Omega)} \leq c_{\text{{\tiny err}}} \| e_{\text{{\tiny mod}}} \|_{H^{-1}(\Omega)} \quad \text{for} \enspace c_{\text{{\tiny err}}} \in [0,1)
\end{equation}
and $\tau_{m,2}^{k} \leq 0.5 C_{err}$ with $C_{err}:=(1-c_{err})/(1 + c_{err})$ to prove the effectivity of $\Delta_{m}^{k}$. Fig.~\ref{fig4.5e} illustrates the convergence behavior of $\|e_{\text{{\tiny EPM}}} \|_{H^{-1}(\Omega)}$ and $\| e_{\text{{\tiny mod}}} \|_{H^{-1}(\Omega)}$ for increasing $m$ and $H = 0.005$. We observe that for $\varepsilon_{\mbox{{\tiny tol}}}^{\text{{\tiny EPM}}}=10^{-5}$ inequality \eqref{mod_dom_ei_H1_num} is satisfied for $m\leq 8$  and for $\varepsilon_{\mbox{{\tiny tol}}}^{\text{{\tiny EPM}}}=10^{-7}$ for $m\leq 15$, keeping in mind that $\|e_{\text{{\tiny EPM}}}^{\text{{\tiny ex}}} \|_{H^{-1}(\Omega)}$ might be higher for $\varepsilon_{\mbox{{\tiny tol}}}^{\text{{\tiny EPM}}}=10^{-7}$. As a consequence the (tighter) requirement $\tau_{m,2}^{k} \leq 0.5 C_{err}$ is satisfied for $H = 0.005$ for $\varepsilon_{\mbox{{\tiny tol}}}^{\text{{\tiny EPM}}}=10^{-5}$ for $m=5,...,8$ and for $\varepsilon_{\mbox{{\tiny tol}}}^{\text{{\tiny EPM}}}=10^{-7}$ for $m=5,...,15$, and for fewer number of basis functions for coarser discretizations. However, we emphasize that also for values of $m$ for which the assumption $\tau_{m,2}^{k} \leq 0.5 C_{err}$ is not fulfilled, we often observe that the effectivity of $\Delta_{m}^{k}$ can be bounded by a constant smaller than $5$ which is independent of $m$ (see Fig.~\ref{fig4.5a} and Fig.~\ref{fig4.5c}).

Finally, we investigate the a priori bound of the adaptive EPM derived in Theorem \ref{apriori-epm}.  As the convergence behavior of $\lambda^{k}$ and $\|\int_{{\omega}}\mathcal{I}_{L}[A(p_{m,k}^{H})]\kappa_{k}\|_{L^{2}(\Omega_{1D})}^{2}$ coincides (see Fig.~\ref{fig4.4}) we may follow Proposition \ref{apost-epm} to obtain an error estimator for the adaptive EPM. Note that \eqref{apost-equation-epm} is a probabilistic result and that the term $\mathcal{O}_{\mathcal{P}}(n^{-1/4})$ does not provide an upper bound for the integration error due to the application of the Monte-Carlo method \cite{Caf98}. However, Fig~\ref{fig4.5f} shows that apart from some deviations due to the EPM-plateau the error $\|e_{\text{{\tiny EPM}}} \|_{L^{2}(\Omega)}$ and the POD-error 
have approximately the same convergence rate. The integration error can thus be estimated by the POD-error. Hence, we employed the sum of the POD-error $e^{k}_{\text{{\tiny POD}}}$ and $\|e_{\text{{\tiny EPM}}}\|_{H^{-1}(\Omega)}$ as a bound in our numerical experiments and decreased the tolerance  $\varepsilon_{\text{{\tiny tol}}}$ in \eqref{apost-equation-epm} by $10^{-1}$ to account for the integration error and thus the deviation between the curves in Fig.~\ref{fig4.5f}. 

For the sake of completeness we note that the input arguments of Algorithm \ref{adapt-RB-HMR} \textsc{Adaptive-RB-HMR}, have been chosen as  $G_{0}=[0,0.5,1,1.5,2]\times[-0.5,0.5]\times[-1,1]$, $m_{\mbox{{\scriptsize max}}} = 2$, $i_{max} = 2$, $n_{\Xi} = 10$, $|\Xi_{c}| = 50$, $\theta = 0.05$, $\sigma_{thres}=(i_{max}-1)\cdot \lceil\diam(g)\rceil + 1$ for an element $g \in G_{0}$ and $\varepsilon_{\mbox{{\tiny tol}}}^{\text{{\tiny HMR}}} = 10^{-5}$, $\varepsilon_{\mbox{{\tiny tol}}}^{\text{{\tiny err}}} = 10^{-9}$, $\varepsilon_{\mbox{{\tiny tol}}}^{\text{{\tiny c}}} = 0.1$ for all computations for this test case. The average sample size has been $n_{\mbox{{\scriptsize train}}} \approx 520.$

\begin{figure}[t]
\centering
\subfloat[{\scriptsize 1 quad. point}]{
\includegraphics[scale = 0.25]{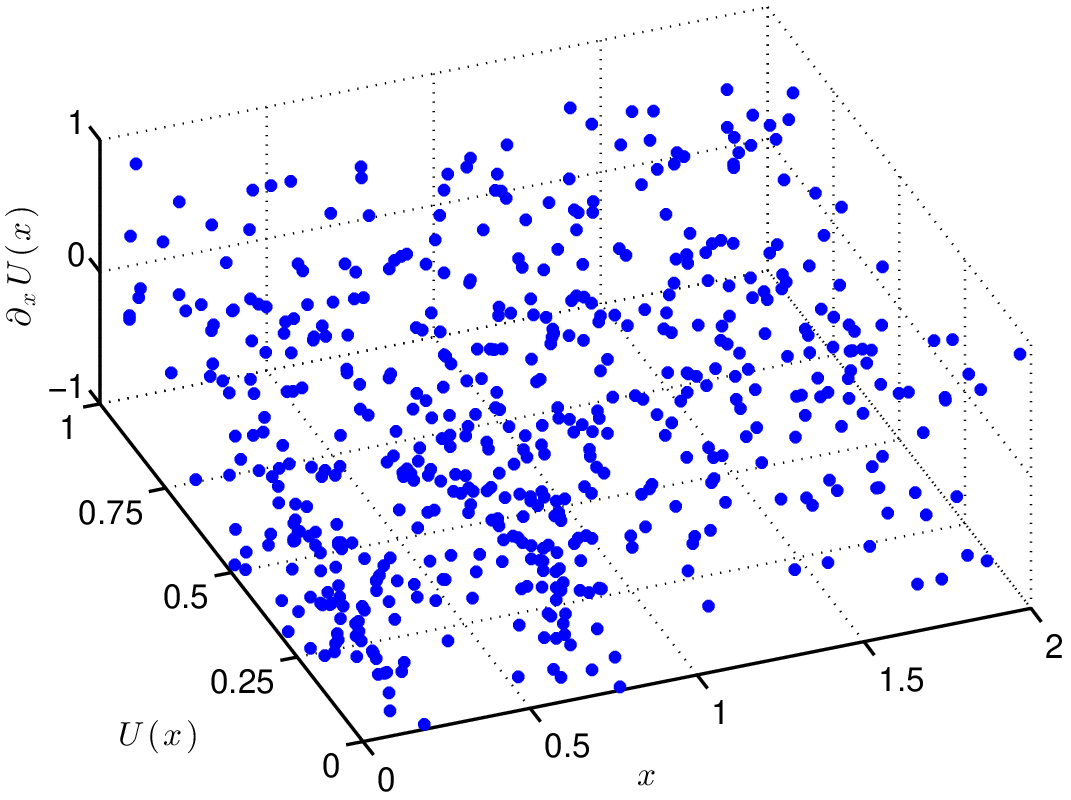} \label{fig4.8a}}
\subfloat[{\scriptsize $[x_{1}^{q},U(x_{1}^{q}),\partial_{x}U(x_{1}^{q})]$ (left), $[x_{2}^{q},U(x_{2}^{q}),\partial_{x}U(x_{2}^{q})]$ (right)}]{
\includegraphics[scale = 0.25]{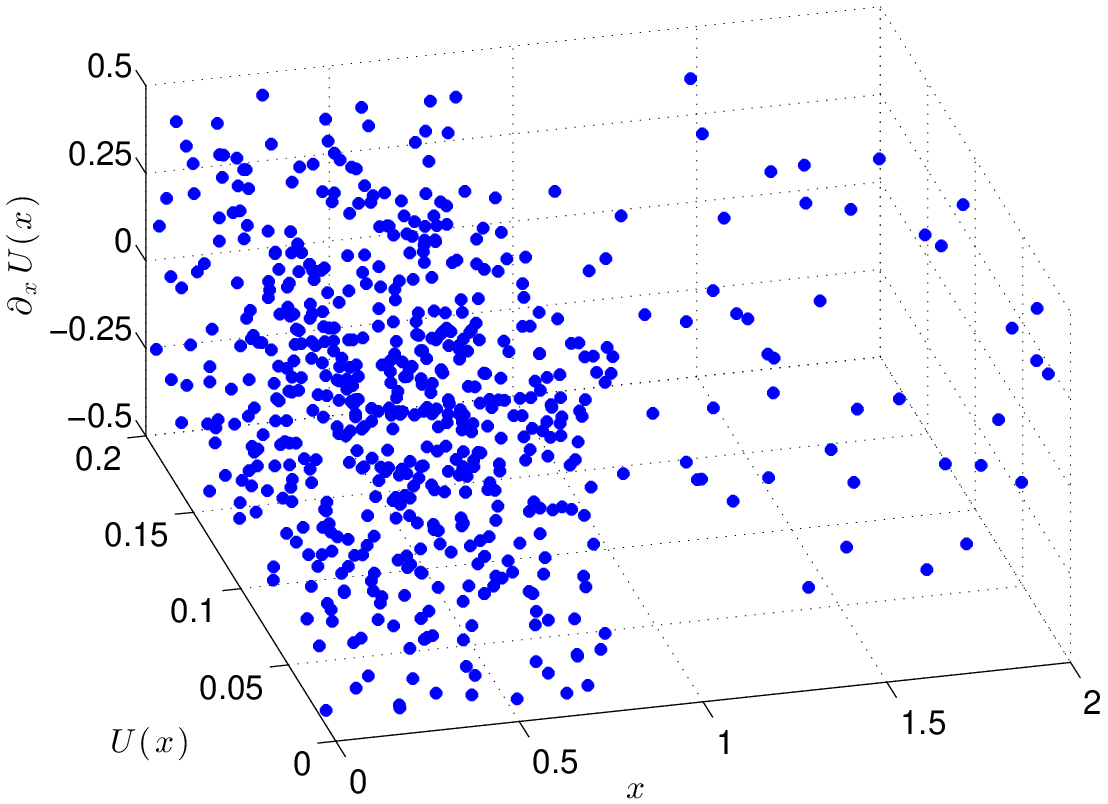}\includegraphics[scale = 0.25]{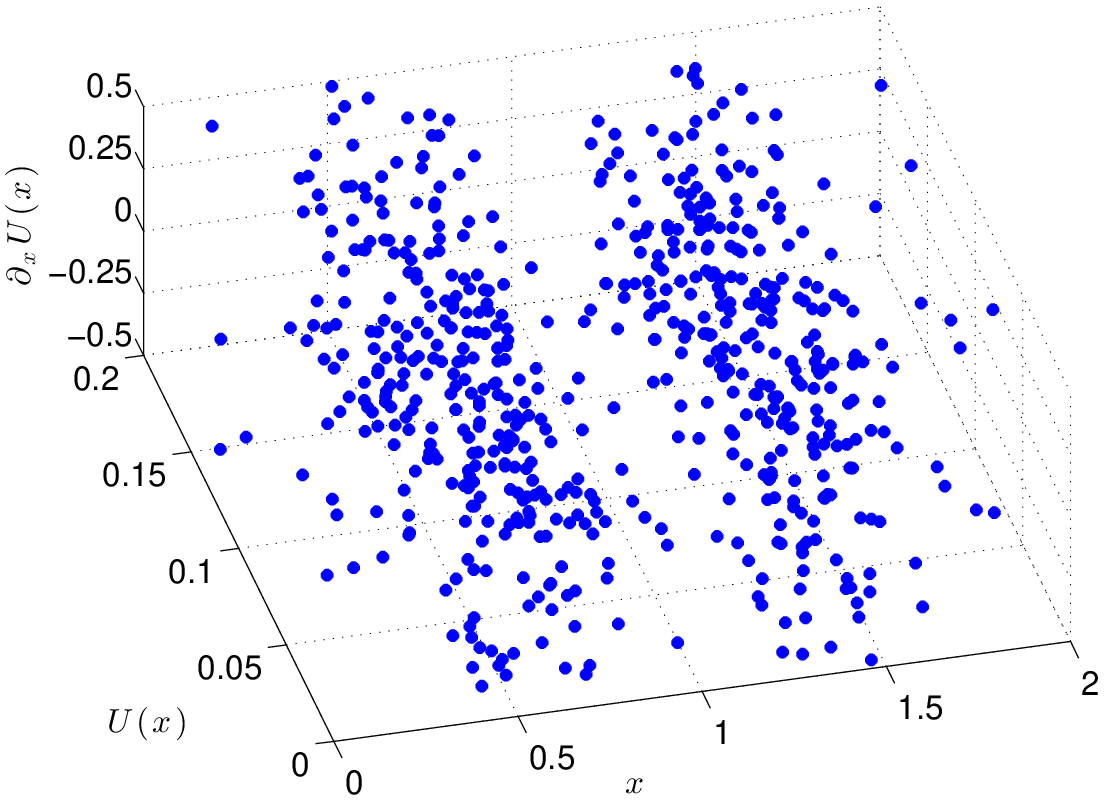}\label{fig4.8b}}
\caption{{\footnotesize Test case 2: Plot of the adaptively refined training set generated by Algorithm \ref{adapt-RB-HMR} \textsc{Adaptive-RB-HMR} when using 1 quadrature point in \eqref{1D_prob_quad_para_nonlin} (A) or 2 (B) for $N_{H}=200$, $n_{h}=100$, $N_{H'}=10$.}\label{fig4.8}}
\end{figure}

\begin{figure}[t]
\centering
\subfloat[{\scriptsize $|e_{m}^{k}|_{H^{1}}^{rel}$}]{
\includegraphics[scale = 0.35]{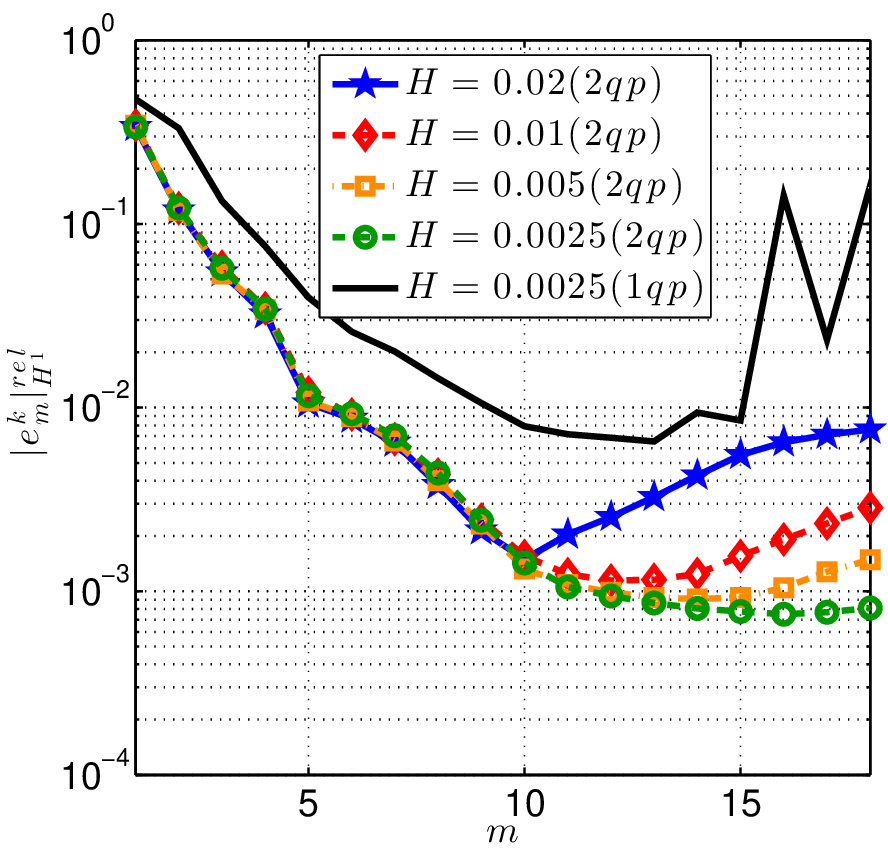}\label{fig4.9d}}
\subfloat[{\scriptsize $\|e_{m}^{k}\|_{L^{2}(\Omega)}^{rel}$, $e_{m}^{\text{{\tiny POD}}}$}]{
\includegraphics[scale = 0.35]{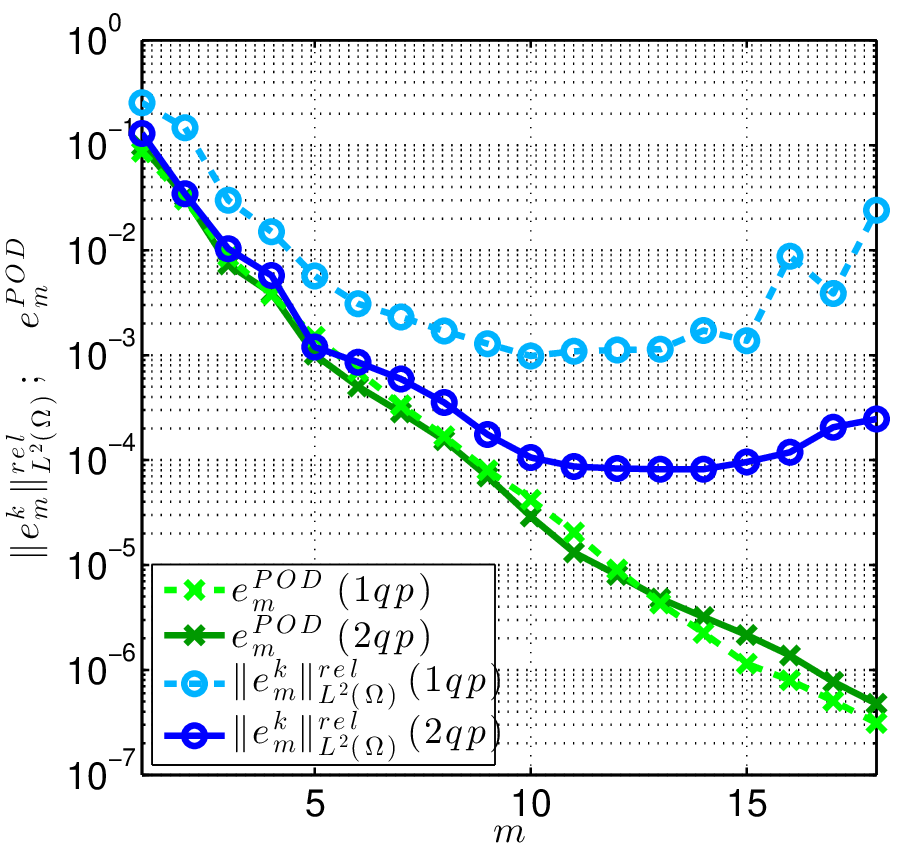}\label{fig4.9e}}
\subfloat[{\scriptsize $\|\bar{p}_{m,k}^{H}\|_{L^{2}(\Omega_{1D})}^{2}$, $\lambda_{m}$}]{
\includegraphics[scale = 0.35]{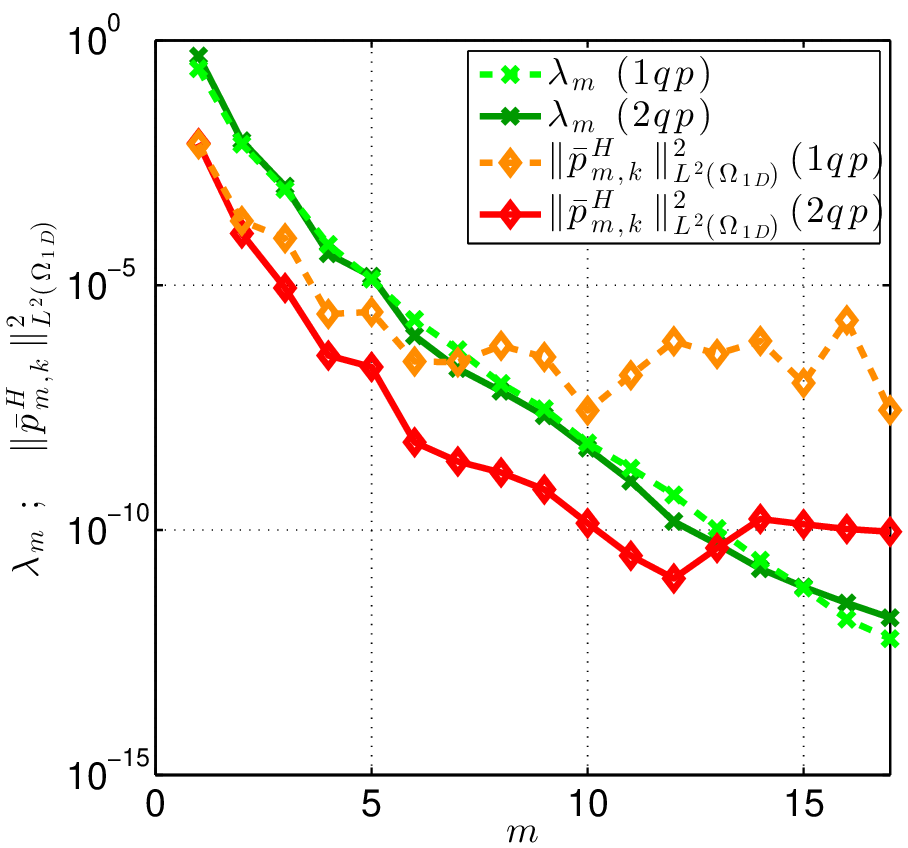}\label{fig4.9f}}
\caption{{\footnotesize Test case 2: Comparison of the convergence behavior of $|e_{m}^{k}|_{H^{1}}^{rel}$ for one quadrature point (1qp) and two quadrature points (2qp) in \eqref{1D_prob_quad_para_nonlin} (A); $\|e_{m}^{k}\|_{L^{2}(\Omega)}^{rel}$ and $e_{m}^{\text{{\tiny POD}}}$ for $Q=1,2$ in \eqref{quad_formula_nonlin} for $H=0.0025$ (B); $\|\bar{p}_{m,k}^{H}\|_{L^{2}(\Omega_{1D})}^{2}$ and  $\lambda_{m}$ for $Q=1,2$ in \eqref{quad_formula_nonlin} for $H=0.0025$ (C). $N_{H'}=10$ for all pictures.}\label{fig4.9}}
\end{figure}


\subsection*{Test case 2}

In this test case we investigate the convergence behavior and computational efficiency of the RB-HMR approach for the approximation of non-smooth solutions of \eqref{model problem_num}. We choose $\Omega = (0,2)\times(0,1)$, $c_{4}=12$ and unless otherwise stated $c_{0}=0.075$. We prescribe as a source term the characteristic function $s(x,y) = \chi_{D_{1}\cup D_{2}\cup D_{3}}$, where $D_{1} = \{(x,y) \in \Omega\,: \, 0.4 \leq x \leq 0.6 \enspace \text{and} \enspace 0.2 \leq y \leq 0.36\}$, $D_{2} = \{(x,y) \in \Omega\,: \, 0.4 \leq x \leq 0.6 \enspace \text{and} \enspace 0.64 \leq y \leq 0.8\}$ and $D_{3} = \{(x,y) \in \Omega\,: \, 1.4 \leq x \leq 1.6 \enspace \text{and} \enspace 0.4 \leq y \leq 0.6\}$ and thus have that the solution $p$ of \eqref{model problem_num} is in $W^{2,q}(\Omega)$ for $q < \infty$ \cite{CalRap1997}. The reference solutions $p^{H\times h}$ \eqref{truth_nonlin} for $c_{0}=0.15$ and $c_{0}=0.075$ are depicted at the top of Fig.~\ref{fig4.7} for $N_{H}= 400$ and $n_{h} =200$, 
where a convergence study has been done to ensure that $p^{H\times h}$ contains all essential features of the exact solution. The strengthened nonlinear effects for decreasing $c_{0}$ can nicely be observed by means of the increased range of $p^{H\times h}$ for $c_{0}=0.075$ and the much more localized peaks for $c_{0}=0.15$. Comparing the reference solutions with its RB-HMR approximations, we see that for both $c_{0}=0.15$ and $c_{0}=0.075$ already $p_{2,20}^{H}$ contains the three peaks and that the contour lines of $p_{5,20}^{H}$ and $p^{H\times h}$ coincide  (Fig.~\ref{fig4.7}). \\
\begin{figure}[t]
\centering
\subfloat[{\scriptsize $|e_{m}^{k}|_{H^{1}}^{rel}$}]{
\includegraphics[scale = 0.35]{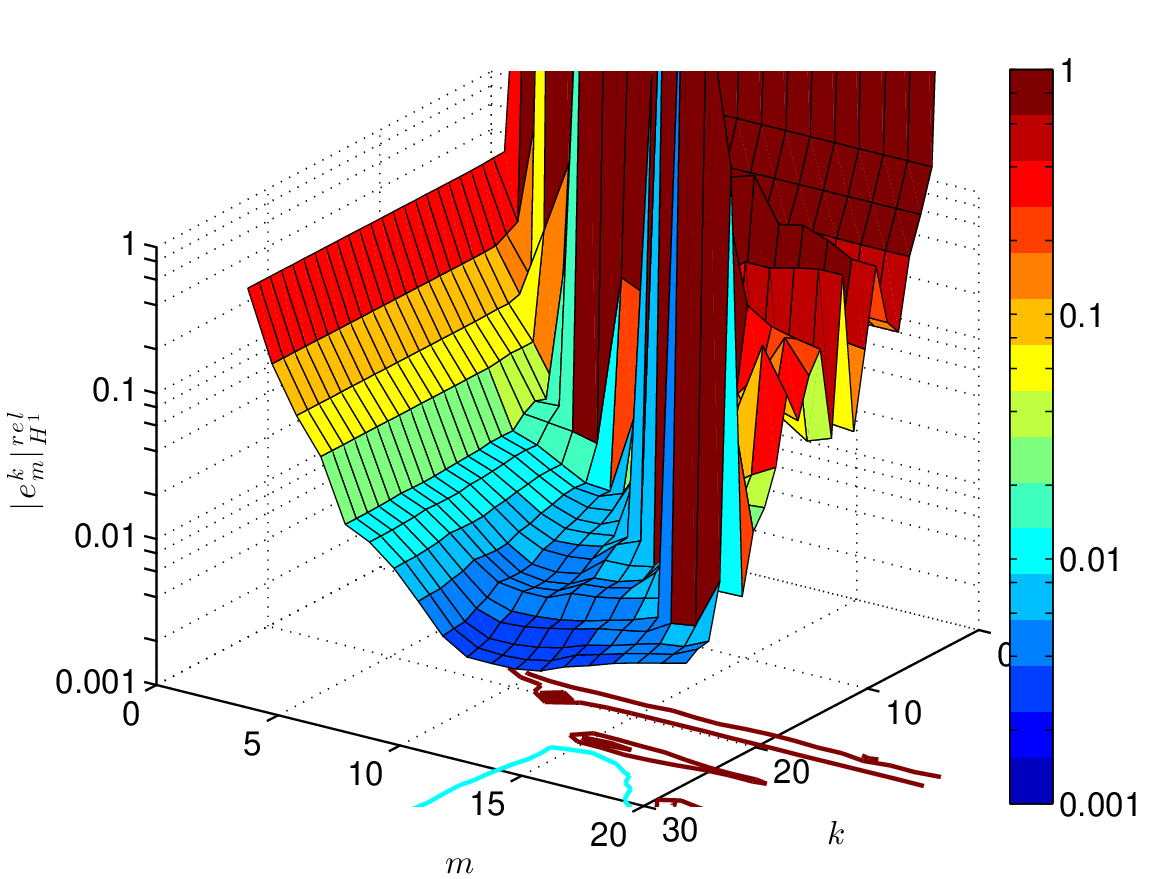} \label{fig4.10a}}
\subfloat[{\scriptsize $|e|_{H^{1}}^{rel}$}]{
\includegraphics[scale = 0.35]{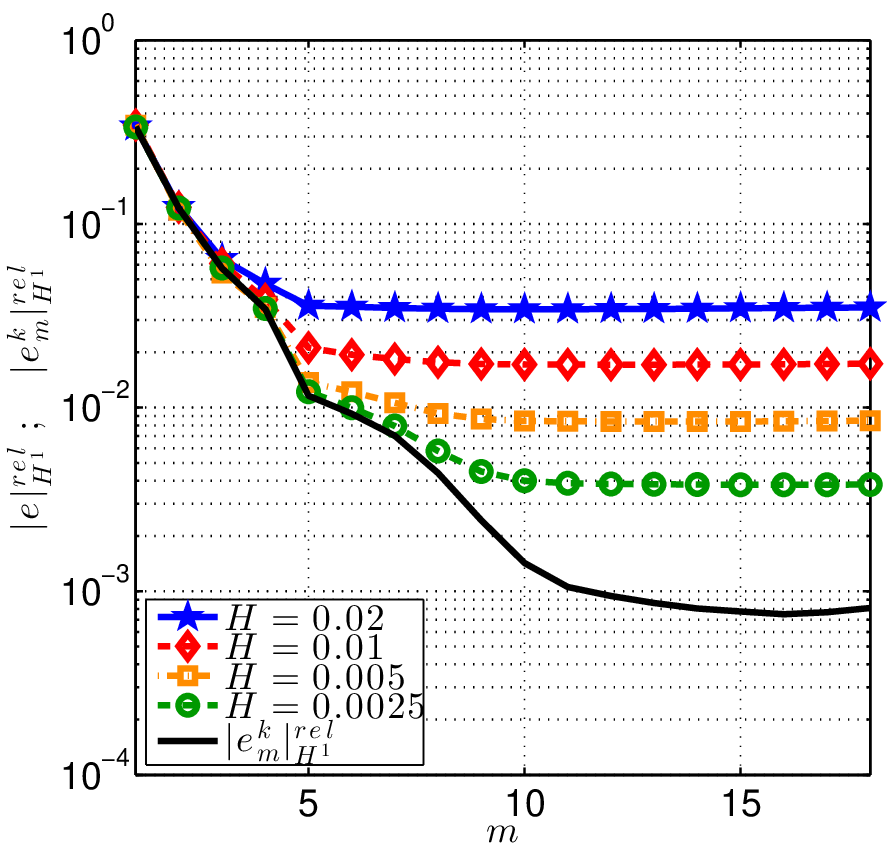}\label{fig4.10b}}
\caption{{\footnotesize Test case 2: Convergence behavior of $|e_{m}^{k}|_{H^{1}}^{rel}$ for increasing $m$ and $k$ for $N_{H}=200$, $n_{h}=100$ (A) and $|e|_{H^{1}}^{rel}$ for decreasing mesh size (B); both pictures: $Q =2$ in \eqref{quad_formula_nonlin}, $N_{H'}=10$.}\label{fig4.10}}
\end{figure}

\begin{figure}[t]
\centering
\includegraphics[scale = 0.37]{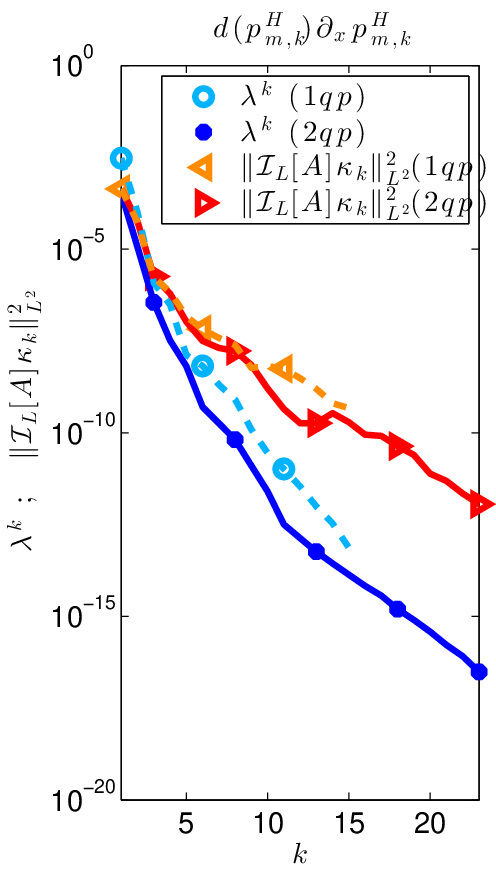}\enspace \includegraphics[scale = 0.37]{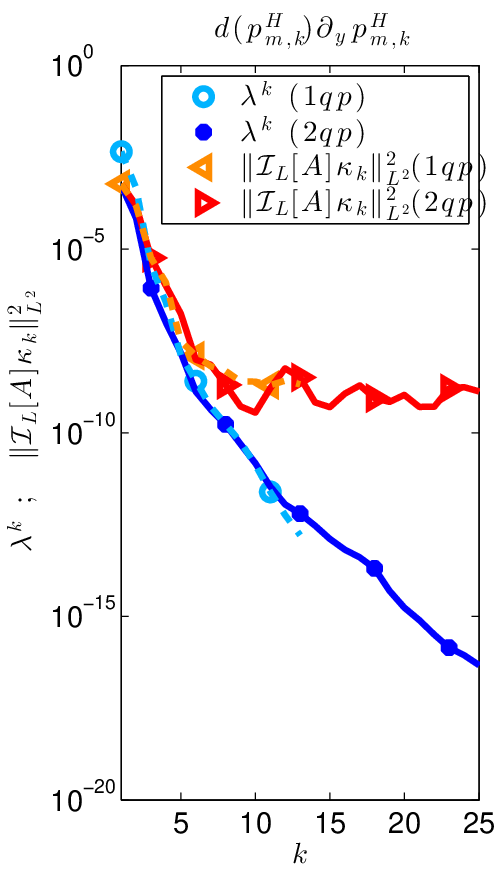}\enspace \includegraphics[scale = 0.37]{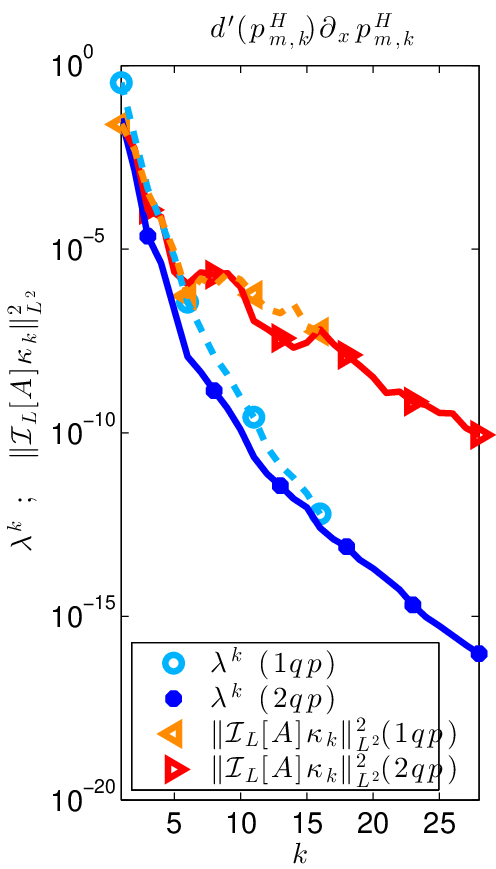}\enspace \includegraphics[scale = 0.37]{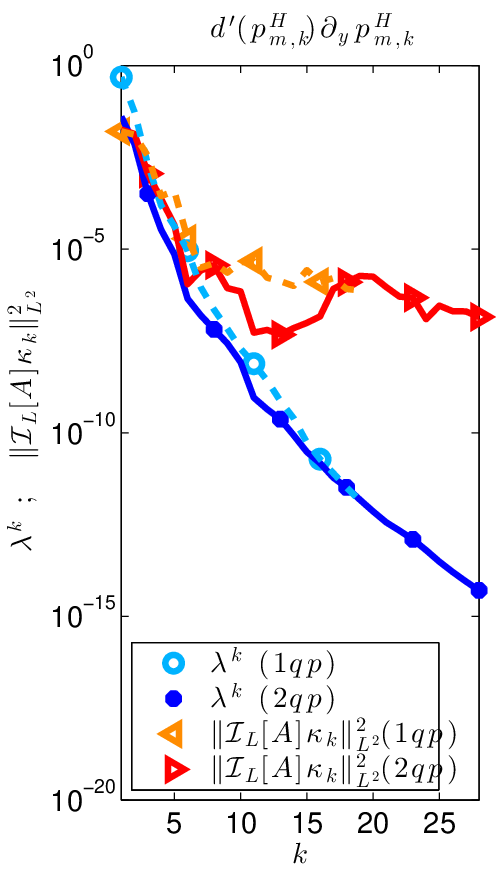}\enspace \includegraphics[scale = 0.37]{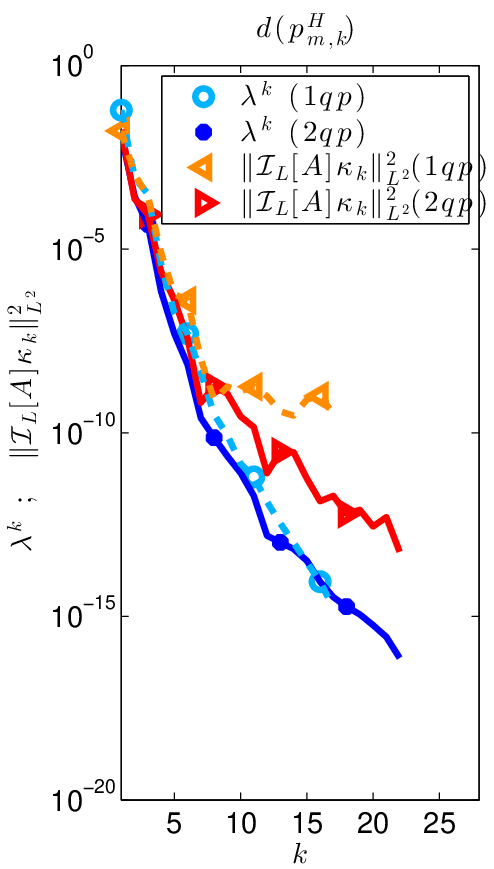}
\caption{{\footnotesize Test case 2: Comparison of $\lambda^{k}$ and $\|\int_{{\omega}}\mathcal{I}_{L}[A(p_{m,k}^{H})]\kappa_{k}\|_{L^{2}(\Omega_{1D})}^{2}$ for $Q=1$ in \eqref{quad_formula_nonlin} (1qp) and $Q=2$ in \eqref{quad_formula_nonlin} (2qp) for $H=0.005$ and $N_{H'}=10$.}\label{fig4.11}}
\end{figure}

\begin{figure}[t]
\centering
\subfloat[{\scriptsize $|e_{m}^{k}|_{H^{1}}$, $\tau_{m,2}^{k}$ (1qp)}]{
\includegraphics[scale = 0.35]{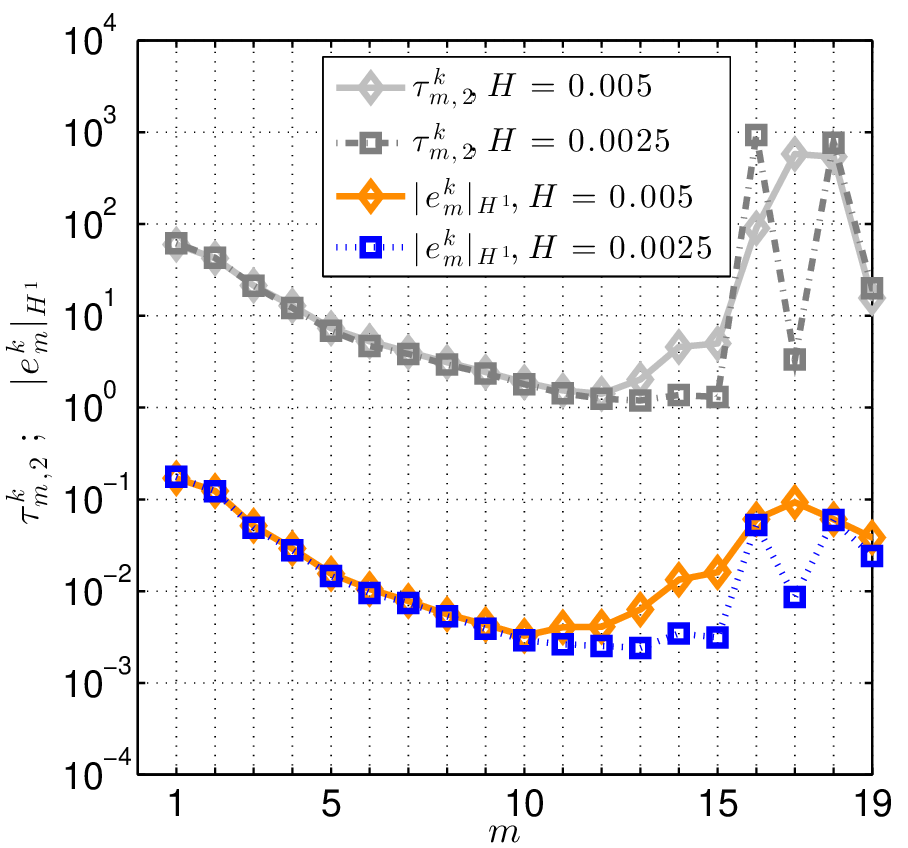} \label{fig4.12a}}
\subfloat[{\scriptsize $|e_{m}^{k}|_{H^{1}}$, $\Delta_{m}^{k}$ (2qp)}]{
\includegraphics[scale = 0.35]{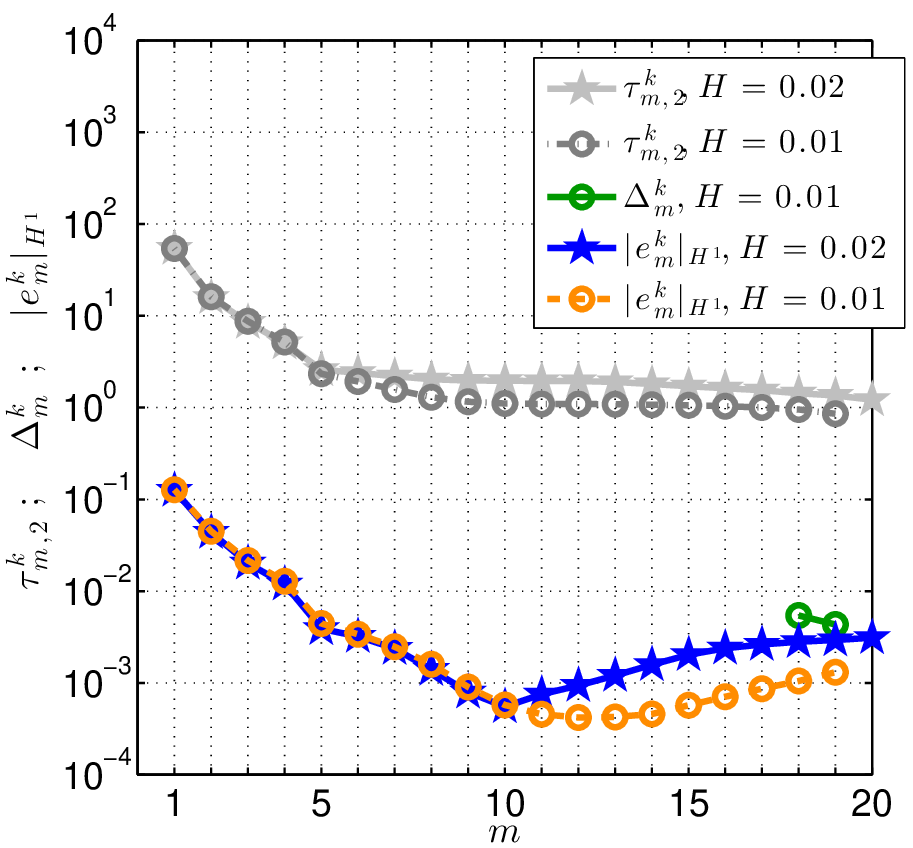}\label{fig4.12b}}
\subfloat[{\scriptsize $|e_{m}^{k}|_{H^{1}}$, $\Delta_{m}^{k}$ (2qp)}]{
\includegraphics[scale = 0.35]{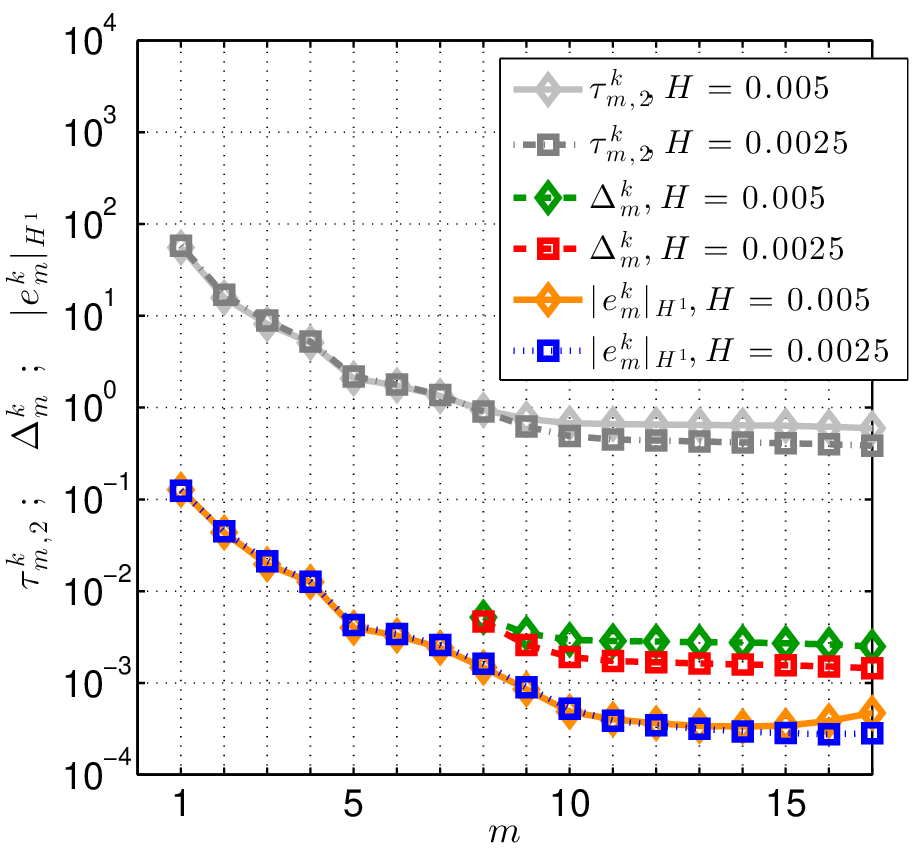}\label{fig4.12c}} \\
\subfloat[{\scriptsize $e_{EPM},e_{EPM}^{ex},e_{mod}$}]{
\includegraphics[scale = 0.35]{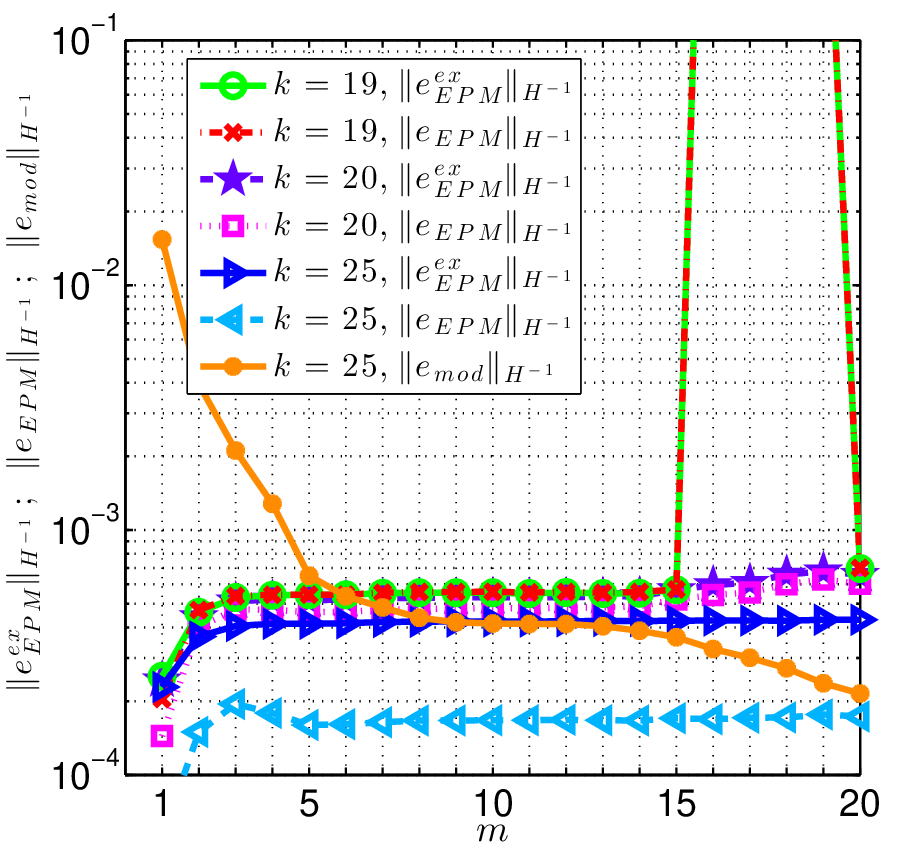}\label{fig4.12d}}
\subfloat[{\scriptsize $e_{EPM},e_{mod}$}]{
\includegraphics[scale = 0.35]{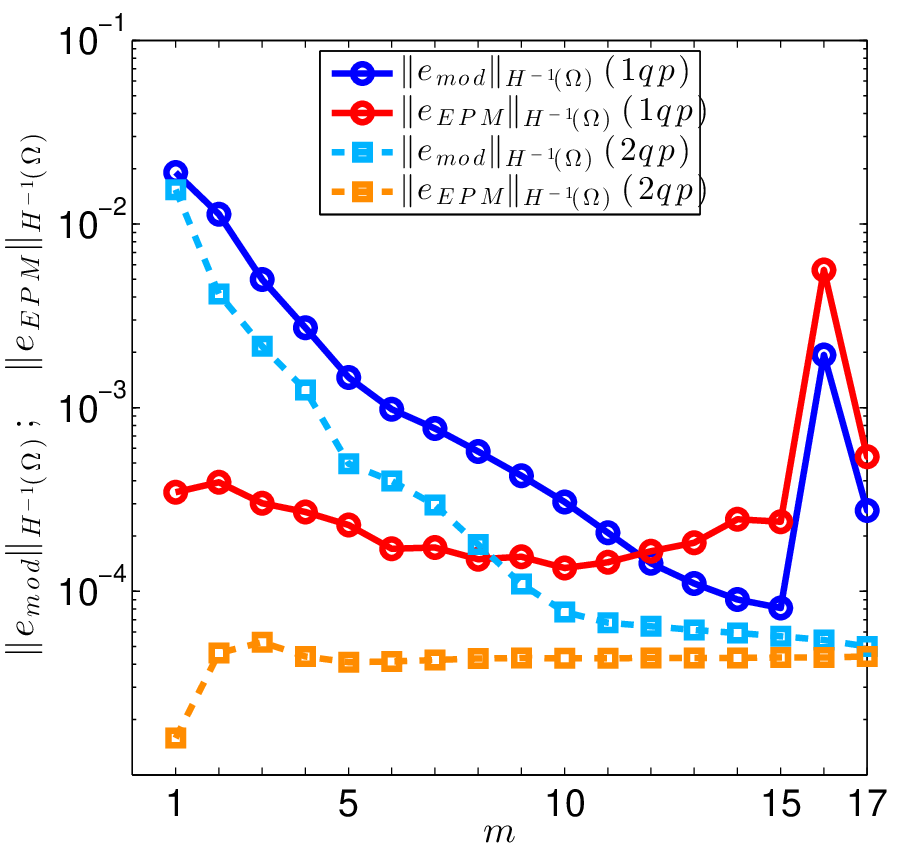}\label{fig4.12e}}
\subfloat[{\scriptsize $e^{k}_{L^{2}},e^{k}_{POD},e_{EPM}$}]{
\includegraphics[scale = 0.35]{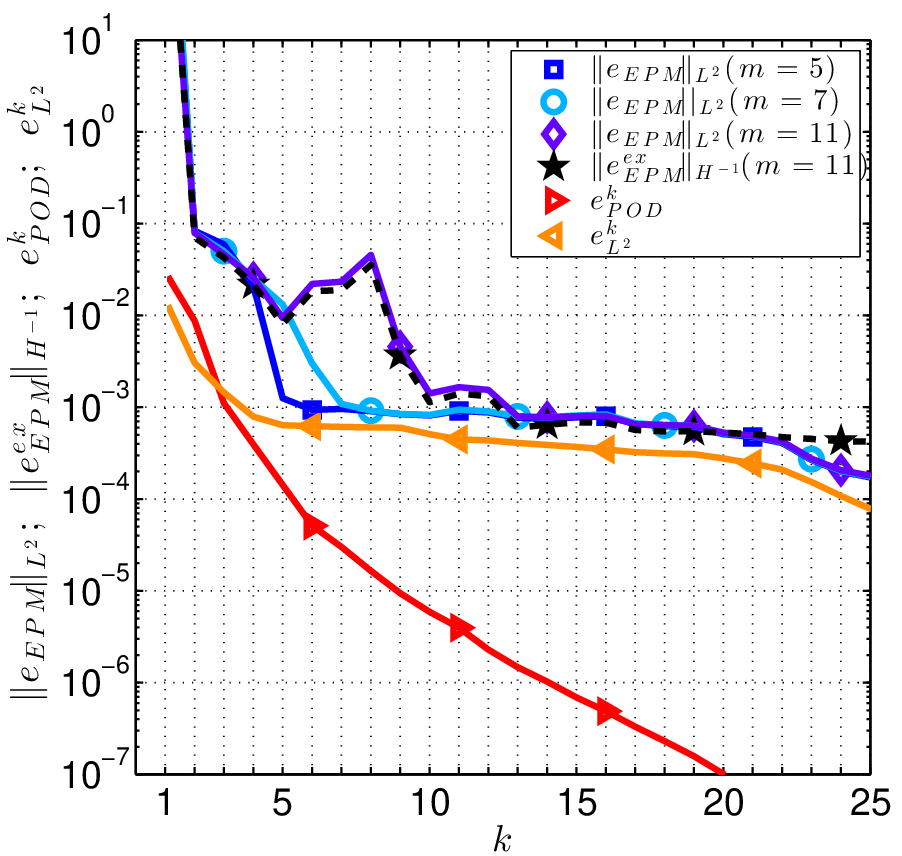}\label{fig4.12f}}
\caption{{\footnotesize Test case 2: Comparison of the a posteriori error estimator $\Delta_{m}^{k}$, the error
indicator $\tau_{m,2}^{k}$ with $|e_{m}^{k}|_{H^{1}(\Omega)}$ for $Q=1$ in \eqref{quad_formula_nonlin} (1qp) and $H = 0.005$ (solid) and $H=0.0025$ (dashed) (A) and  $Q=2$ in \eqref{quad_formula_nonlin} (2qp) and $H = 0.02$ and $H=0.01$ (B) and $H=0.005$ and $H=0.0025$ (C). Comparison of $\|e_{\mbox{{\tiny EPM}}}\|_{H^{-1}(\Omega)}$, $\|e_{\mbox{{\tiny EPM}}}^{\mbox{{\tiny ex}}}\|_{H^{-1}(\Omega)}$, and $\|e_{\mbox{{\tiny mod}}}\|_{H^{-1}(\Omega)}$ for $H= 0.02$ and $Q=2$ (D), and $\|e_{\mbox{{\tiny mod}}}\|_{H^{-1}(\Omega)}$ and $\|e_{\mbox{{\tiny EPM}}}\|_{H^{-1}(\Omega)}$ for $Q=1,2$ and $H =0.0025$ (E), and $\|e_{\mbox{{\tiny EPM}}}\|_{L^{2}(\Omega)}$, $\|e_{\mbox{{\tiny EPM}}}^{\mbox{{\tiny ex}}}\|_{H^{-1}(\Omega)}$, $e^{k}_{\mbox{{\tiny POD}}}$, and $e^{k}_{L^{2}}$ for $Q=2$ and $H =0.02$ (F).}\label{fig4.12}}
\end{figure}

The input arguments of Algorithm \ref{adapt-RB-HMR} (\textsc{Adaptive-RB-HMR}) have been chosen as $G_{0}=[0,0.4,0.8,1.2,1.6,2]$ $\times[0,1]\times[-1,1]$, $m_{\mbox{{\scriptsize max}}} = 2$, $i_{max} = 2$, $n_{\Xi} = 10$, $|\Xi_{c}| = 50$,  $\theta = 0.05$, $\sigma_{thres}=(i_{max}-1)\cdot \lceil\diam(g)\rceil + 1$ for an element $g \in G_{0}$ and $\varepsilon_{\mbox{{\tiny tol}}}^{\text{{\tiny HMR}}} = 10^{-3}$, $\varepsilon_{\mbox{{\tiny tol}}}^{\text{{\tiny EPM}}} = 10^{-7}$, $\varepsilon_{\mbox{{\tiny tol}}}^{\text{{\tiny err}}} = 10^{-9}$ for all computations for this test case employing one quadrature point in \eqref{1D_prob_quad_para_nonlin} and thus setting $Q=1$ in \eqref{quad_formula_nonlin}. This resulted in an average sample size of $n_{\mbox{{\scriptsize train}}} \approx 580$. For two quadrature points in \eqref{1D_prob_quad_para_nonlin} or $Q=2$ in \eqref{quad_formula_nonlin}  we have chosen $G_{0}=[0,0.4,0.8,1.2,1.6,2]\times[0,0.2]\times[-0.5,0.5]$,$m_{\mbox{{\scriptsize max}}} = 2$, $i_{max} = 1$, $n_{\Xi} = 4$, $\theta = 0.01$, $\sigma_{thres}=i_{max}\cdot \lceil\diam(g)\rceil + 1$, $\varepsilon_{\mbox{{\tiny tol}}}^{\text{{\tiny EPM}}} = 10^{-8}$, and $\varepsilon_{\mbox{{\tiny tol}}}^{\text{{\tiny err}}} = 10^{-10}$, which yielded on average $n_{\mbox{{\scriptsize train}}} \approx 600$. 
Fig.~\ref{fig4.8a} illustrates the training set $\Xi$ generated with Algorithm \ref{adapt-train} \textsc{AdaptiveTrainExtension} for $Q =1$. It can be seen that the training set is mainly refined at the two peaks at $x=0.5$ and near $x=0$, but not around the other peak in the solution at $x=1.5$. Using two quadrature points in \eqref{1D_prob_quad_para_nonlin} we observe a refinement of the training set in the expected regions, namely around the peaks  at $x=0.5$ and $x=1.5$ (Fig.~\ref{fig4.8b}). 

Analyzing the convergence behavior of $|e_{m}^{k}|_{H^{1}(\Omega)}^{rel}$ we detect an exponential convergence rate, which is much better for $Q=2$ (Fig.~\ref{fig4.9d}). Furthermore, we have observed for $Q=1$ a much stronger increase of $|e_{m}^{k}|_{H^{1}(\Omega)}^{rel}$ when entering the EPM-plateau especially for coarser mesh sizes. Note that additionally $\mathcal{D}$ has been shrunk when passing from $Q=1$ to $Q=2$, which further improved the rates. However, shrinking $\mathcal{D}$ without increasing $Q$ had no effect. A comparison of $\|e_{m}\|_{L^{2}(\Omega)}^{rel}$ and $e_{m}^{\mbox{{\tiny POD}}}$  in Fig.~\ref{fig4.9e} shows that for $Q=2$ the convergence rates coincide until $\|e_{m}\|_{L^{2}(\Omega)}^{rel}$ approaches the EPM-plateau, but differ for $Q =1$. Regarding $\lambda_{m}$ and $\|\bar{p}_{m,k}^{H}\|_{L^{2}(\Omega_{1D})}^{2}$ we observe in Fig.~\ref{fig4.9f} that their convergence rates significantly differ for $Q=1$, but coincide for $Q = 2$ for $m\leq 12$. The rise of $\|\bar{p}_{m,k}^{H}\|_{L^{2}(\Omega_{1D})}^{2}$ for $m > 12$ might be caused by the EPM-plateau. Thus, we conclude that for the present test case for $Q=2$ the discrete solution manifold $\mathcal{M}^{\mathcal{P}}_{\Xi}$ \eqref{manifold-nonlin} and the reference solution $p^{H\times h}$ are approximated with the same approximation quality by the reduction space $Y_{m}$. Note that the \textsc{QP-Indicator}  would have detected  in line \ref{qp1} of Algorithm \ref{adapt-train} that an increase of $Q$ is necessary, but would not have raised $Q$ further in line \ref{qp2} due to the coincidence of the rates of  $\lambda_{m}$ and $\|\bar{p}_{m,k}^{H}\|_{L^{2}(\Omega_{1D})}^{2}$ for $m\leq 10$. 

Fig.~\ref{fig4.10a} shows the error convergence of $|e_{m}^{k}|_{H^{1}(\Omega)}^{rel}$ for a simultaneous growth of  $m$ and $k$ for $H = 0.01$. We see on the one hand a strong increase of the error if $m$ exceeds $k$ but on the other hand a nice error decay and only a small increase in the EPM-plateau if $k \geq m + 5$ is satisfied. We suppose that the worse behavior of $p_{m,k}^{H}$ in the EPM-plateau compared to the previous test case (compare Fig.~\ref{fig4.3e} and Fig.~\ref{fig4.10a}) is due to the fact that the full solution of the present test case is non-smooth. An investigation of the convergence behavior of the relative total error  demonstrates that for $Q=2$ the EPM-plateau has no effect on $|e|_{H^{1}(\Omega)}^{rel}$ (cf.~Fig.~\ref{fig4.10b}), whereas for $Q =1$ the EPM-plateau influences the convergence of $|e|_{H^{1}(\Omega)}^{rel}$. 

Comparing the convergence rates of $\lambda^{k}$ with $\|\int_{{\omega}}\mathcal{I}_{L}[A(p_{m,k}^{H})]\kappa_{k}\|_{L^{2}(\Omega_{1D})}^{2}$ for the employed collateral basis spaces, we see in Fig.~\ref{fig4.11} that they are comparable for $k\leq5$ for $Q=1$ and $k\leq 12$ for $Q=2$, but clearly differ for higher values. This is due to the behavior of $\|\bar{p}_{m,k}^{H}\|_{L^{2}(\Omega_{1D})}^{2}$ (cf.~Fig.~\ref{fig4.9f}), which stagnate exactly for $m=5$ ($Q=1$) and $m=12$ ($Q=2$). However, we have observed that the level of the plateau of the coefficients $\|\bar{p}_{m,k}^{H}\|_{L^{2}(\Omega)}^{2}$ and $\| \int_{{\omega}}\mathcal{I}_{L}[A(p_{m,k}^{H})]\kappa_{k}\|_{L^{2}(\Omega_{1D})}^{2}$ reduces for decreasing mesh sizes which might indicate that their stagnation is related to the EPM-plateau. We hence suppose that since the solution of the present test case is non-smooth, in contrast to the previous example, the behavior of  $p_{m,k}^{H}$ in the EPM-plateau (compare Fig.~\ref{fig4.3e} and Fig.~\ref{fig4.10a})  also affects the coefficients $\|\bar{p}_{m,k}^{H}\|_{L^{2}(\Omega)}^{2}$ and $\| \int_{{\omega}}\mathcal{I}_{L}[A(p_{m,k}^{H})]\kappa_{k}\|_{L^{2}(\Omega_{1D})}^{2}$. \\

\begin{figure}[t]
\centering
\subfloat[{\scriptsize Runtime [min] for increasing $N_{H}$}]{
\includegraphics[scale = 0.35]{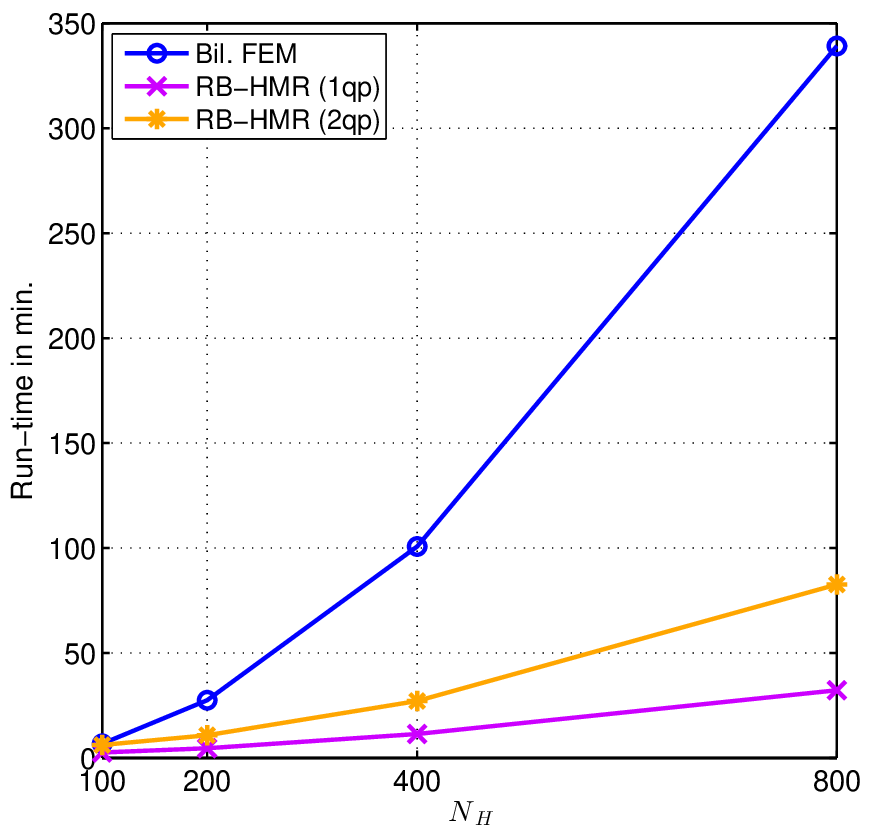} \label{fig4.15a}}
\subfloat[{\scriptsize Runtime [min] incl. certification}]{
\includegraphics[scale = 0.35]{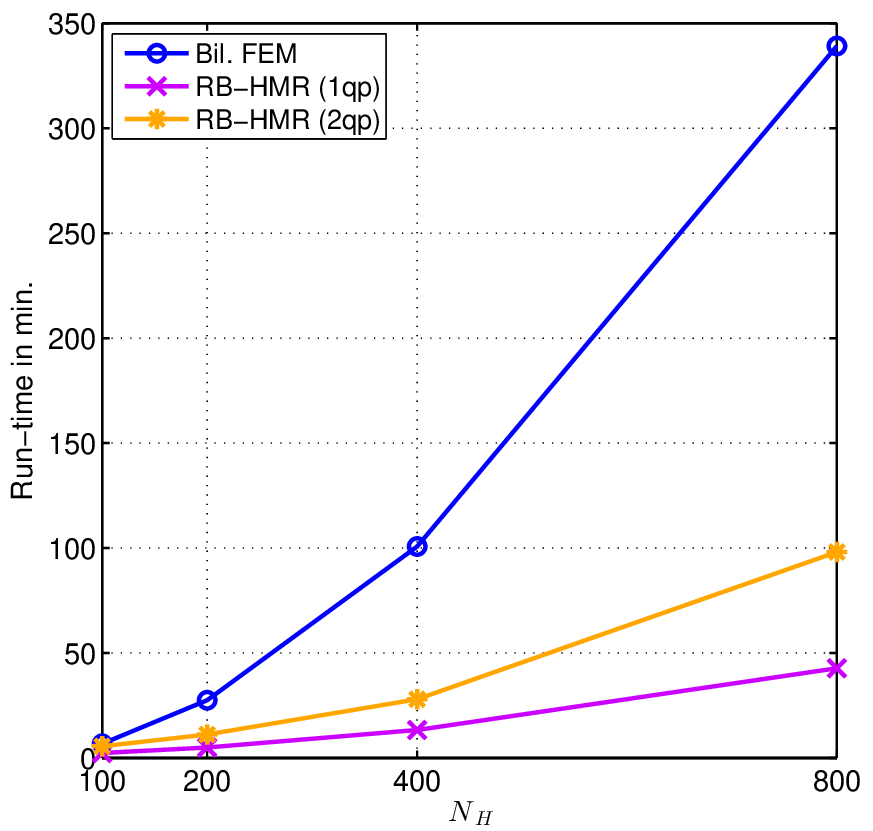}\label{fig4.15c}}
\subfloat[{\scriptsize $|e|_{H^{1}}^{rel}$ vs. runtime [min]}]{
\includegraphics[scale = 0.35]{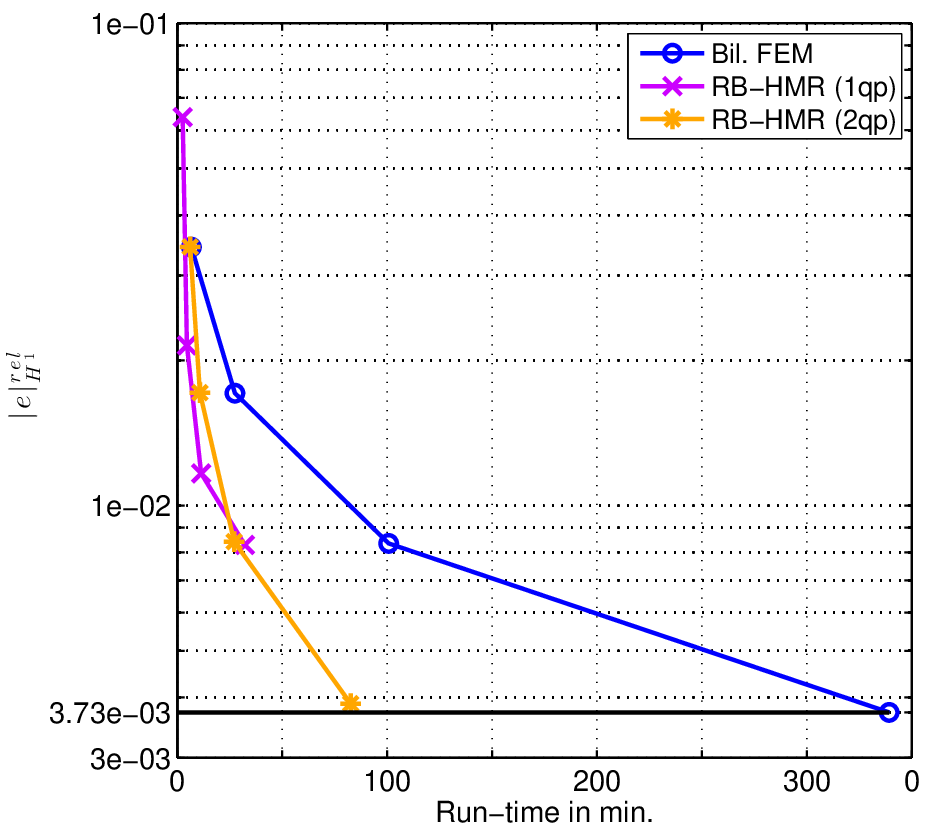}\label{fig4.15b}}
\caption{{\footnotesize Test case 2: Comparison of the total computational costs for the 2D bilinear FEM and the RB-HMR approach for $Q=1$ in \eqref{quad_formula_nonlin} (1qp) and $Q=2$ in \eqref{quad_formula_nonlin} (2qp) and $N_{H'}=10$. Solely in (B) we provide the runtimes if we compute $\Delta_{m}^{k}$ not only during the generation of the bases but also after the computation of $p_{m,k}^{H}$ to certify the approximation.}\label{fig4.15}}
\end{figure}

Next, we analyze the a posteriori error bounds derived in \S \ref{subsect-BRR}. The approximate values of the inf-sup stability factor $\beta_{2,p}^{\text{\tiny app}}$ \eqref{inf-sup-apost-H1} obtained with the method proposed in \S \ref{subsect_constants} are 
$
\beta_{2,p}^{\text{\tiny app}}\approx 0.073907 \,(H=0.02),$ $\beta_{2,p}^{\text{\tiny app}}\approx 0.074120\, (H=0.01),$ $\beta_{2,p}^{\text{\tiny app}}\approx 0.073955 \,(H=0.005),$ and $\beta_{2,p}^{\text{\tiny app}}=0.072911\, (H=0.0025)
$ for $Q=1$ and
$
\beta_{2,p}^{\text{\tiny app}}\approx 0.074403\, (H=0.02), \beta_{2,p}^{\text{\tiny app}}\approx 0.074436\, (H=0.01), \beta_{2,p}^{\text{\tiny app}}\approx 0.073969\, (H=0.005),$ and $\beta_{2,p}^{\text{\tiny app}}=0.072923\, (H=0.0025)
$ for $Q=2$, which seems consistent with the considered ellipticity constant $c_{0}=0.075$. 

For $Q=2$ and $H=0.005$ and $H=0.0025$ we obtain $\tau^{k}_{m,2}<1$ for $m \geq 8$ and we observe in Fig.~\ref{fig4.12c} that $\Delta_{m}^{k}$ provides an upper bound for $|e_{m}^{k}|_{H^{1}(\Omega)}$. Moreover, it can be seen that  $\Delta_{m}^{k}$ reproduces the error behavior very well and provides a sharp bound as the effectivities vary between $3.5$ and $8.5$ for $H=0.005$ and $2.8$ and $5.5$ for $H=0.0025$. Although we have $\|e_{\text{{\tiny EPM}}} \|_{H^{-1}(\Omega)} < \|e_{\text{{\tiny mod}}} \|_{H^{-1}(\Omega)}$ for all $m$ (see Fig.~\ref{fig4.12e}) both for $H=0.005$ and $H=0.0025$, for neither of the two the assumption $\tau^{k}_{m,2} \leq 0.5C_{err}$ in Proposition \ref{H1_bound} is satisfied. However, note that as in the previous test case, $\Delta_{m}^{k}$ reproduces the behavior of $|e_{m}^{k}|_{H^{1}(\Omega)}$ very well also for model orders for which $\tau^{k}_{m,2} \leq 0.5C_{err}$ is not satisfied. For $Q=2$ and the coarser discretizations $H=0.02$ and $H=0.01$ we obtain $\tau^{k}_{m,2}<1$ only for $m=18,19$ for $H=0.01$ (see Fig.~\ref{fig4.12b}) and for $Q=1$ we have $\tau^{k}_{m,2}>1$ for all considered mesh sizes (see Fig.~\ref{fig4.12a}). This is probably due to the fact that  in the present test case the solution exhibits limited spatial regularity, making it difficult to provide a satisfactory approximation with coarser discretizations and $Q=1$. Nevertheless, as $\tau^{k}_{m,2}$ captures the behavior of $|e_{m}^{k}|_{H^{1}(\Omega)}$ quite well (see Fig.~\ref{fig4.12a}-Fig.~\ref{fig4.12c}), $\tau^{k}_{m,2}$ may serve as an error indicator in those cases, suggesting that say $Q$ has to be increased. 

Fig.~\ref{fig4.12d} shows that for $Q=2$ and $k\geq 20$ we obtain $\|e_{\text{{\tiny EPM}}}^{\text{{\tiny ex}}} \|_{H^{-1}(\Omega)}>\|e_{\text{{\tiny EPM}}} \|_{H^{-1}(\Omega)}$, while for $k < 20$ it can be seen that $\|e_{\text{{\tiny EPM}}}^{\text{{\tiny ex}}} \|_{H^{-1}(\Omega)}$ and $\|e_{\text{{\tiny EPM}}} \|_{H^{-1}(\Omega)}$ coincide perfectly. This numerically proves that also for the present test case for a non-dominant discretization error, the a posteriori bound for the adaptive EPM \eqref{delta_epm} results in a very good approximation of $\|e_{\text{{\tiny EPM}}}^{\text{{\tiny ex}}}\|_{L^{2}(\Omega)}$ and thus $\|e_{\text{{\tiny EPM}}}^{\text{{\tiny ex}}}\|_{H^{-1}(\Omega)}$. Note that for $Q=1$ in \eqref{quad_formula_nonlin} the behavior of $|e_{m}^{k}|_{H^{1}(\Omega)}$ is reproduced perfectly  (Fig.~\ref{fig4.12a}) and that  $\|e_{\text{{\tiny EPM}}}^{\text{{\tiny ex}}} \|_{H^{-1}(\Omega)}$ and $\|e_{\text{{\tiny EPM}}} \|_{H^{-1}(\Omega)}$ mainly coincide even for high values of $k$ due to the higher level of the EPM-plateau. As in the previous test case we have set $\varepsilon_{\mbox{{\tiny tol}}}^{\text{{\tiny err}}} = tol_{k'} \varepsilon_{\mbox{{\tiny tol}}}^{\text{{\tiny EPM}}}$ with $tol_{k'}  =10^{-2}$ (\S \ref{adapt-RB-HMR-epm}), which yielded on average $k'-k \approx 4$ for $Q=1$ and $k'-k \approx 6$ for $Q=2$ during the adaptive refinement procedure and for the certification of the RB-HMR approach we obtained $k'-k \approx 2$ for $Q=1$ and $k'-k \approx 4$ for $Q=2$. 

Comparing $\|e_{\text{{\tiny mod}}}\|_{H^{-1}(\Omega)}$ and $\|e_{\text{{\tiny EPM}}} \|_{H^{-1}(\Omega)}$ for $Q=1,2$ in Fig.~\ref{fig4.12e} we observe that $\|e_{\text{{\tiny EPM}}} \|_{H^{-1}(\Omega)}$ has improved much more than $\|e_{\text{{\tiny mod}}}\|_{H^{-1}(\Omega)}$ due to the increase of $Q$. Hence, increasing $Q$ seems to significantly reduce the level of the EPM-plateau which in turn considerably improves the error behavior as has already been assessed in the analysis of Fig.~\ref{fig4.9}. In contrast to the previous test case, also for small tolerances $\varepsilon_{\mbox{{\tiny tol}}}^{\text{{\tiny EPM}}}$ a stagnation of 
$\|e_{\text{{\tiny mod}}}\|_{H^{-1}(\Omega)}$ can be observed (compare Fig.~\ref{fig4.5e} and Fig.~\ref{fig4.12e}). Thus, we suppose that due to the worse behavior of $p_{m,k}^{H}$ in the EPM-plateau compared to the previous test case, the EPM-plateau affects the convergence behavior of the model error for the present example. 

As the convergence behavior of $\lambda^{k}$ and $\|\int_{{\omega}}\mathcal{I}_{L}[A(p_{m,k}^{H})]\kappa_{k}\|_{L^{2}(\Omega_{1D})}^{2}$ does not coincide (see Fig.~\ref{fig4.11}), we replace, as proposed in \S \ref{subsect-BRR}, $\lambda^{k}$ by $\|\int_{{\omega}}\mathcal{I}_{L}[A(p_{m,k}^{H})]\kappa_{k}\|_{L^{2}(\Omega_{1D})}^{2}$ in the a priori bound \eqref{train_size_fixed} of Theorem \ref{apriori-epm}. Fig.~\ref{fig4.12f} shows that $e^{k}_{L^{2}}$ captures the behavior of $\|e_{\text{{\tiny EPM}}}\|_{L^{2}(\Omega)}$ and $\|e_{\text{{\tiny EPM}}}^{\text{{\tiny ex}}}\|_{H^{-1}(\Omega)}$ perfectly for $k \geq m$. The deviations for $k < m$ are due to the EPM-plateau. Although the snapshots set $\mathcal{M}^{\mathcal{A}}_{\Xi}$ and $A(p^{H\times h})$ are not approximated with the same approximation quality due to the EPM-plateau (Fig.~\ref{fig4.11}), we observe that $\|e_{\text{{\tiny EPM}}}\|_{L^{2}(\Omega)}$ and $\|e_{\text{{\tiny EPM}}}^{\text{{\tiny ex}}}\|_{H^{-1}(\Omega)}$ coincide for $k \leq 23$. Thus, we conclude that for the present test case the modified version of the a priori bound \eqref{train_size_fixed} of Theorem \ref{apriori-epm}, obtained by substituting $\lambda^{k}$ by $\|\int_{{\omega}}\mathcal{I}_{L}[A(p_{m,k}^{H})]\kappa_{k}\|_{L^{2}(\Omega_{1D})}^{2}$, can be applied to obtain a robust and efficiently computable a posteriori error estimator $\Delta^{k,rel}_{m}$.   \\

Finally, we compare the total computational costs of the RB-HMR approach using the adaptive EPM to compute $p_{m,k}^{H}$ \eqref{red_prob_hmr_epm}, with the costs of the 2D bilinear FEM for the computation of $p^{H\times h} \in V^{H\times h}$ \eqref{truth_nonlin}. Here, by the term ``total computational costs'' we mean all costs that are required to compute an approximation. Thus the total computational costs for the RB-HMR method comprise the costs for the construction of the reduction space and the collateral basis space by Algorithm \ref{adapt-RB-HMR}. Amongst others the costs for the RB-HMR approach therefore include the costs for the adaptive generation of the snapshot sets by Algorithm \ref{adapt-train} and the within this algorithm employed a posteriori error estimator. They also comprise the costs for the PODs which ultimately yield the reduction space and the collateral basis space. Finally, the total computational costs for the RB-HMR approximation also include the costs for the assembling and solution of the nonlinear system of equations \eqref{red_prob_hmr_epm}. 

For the solution of the nonlinear system of equations within Newton's method we employed in both cases a bicgstab method with the same settings. Also the tolerance for Newton's method has been chosen identically. In Fig.~\ref{fig4.15a} we see that the bilinear FEM scales quadratically in $N_{H}$, while the RB-HMR approach with the adaptive EPM scales nearly linearly in $N_{H}$ both for $Q=1$ and $Q=2$ in \eqref{quad_formula_nonlin}. In detail, we observe for $Q=1$ a scaling in $N_{H}$ of order $1.3$ and for $Q=2$ of order $1.35$. Note that this deviation from a linear scaling is due to the eigenvalue problems which need to be solved for the approximation of the inf-sup stability factor. In  Fig.~\ref{fig4.15b} the total computational costs of the bilinear FEM and the RB-HMR approach are plotted versus the respective relative total error $|e|_{H^{1}(\Omega)}^{rel}$. Due to the EPM-plateau the total error $|e|_{H^{1}(\Omega)}^{rel}$ for $Q=1$ lies well above the one of the bilinear FEM for the same mesh size, while only minimal deviations can be observed for $Q=2$. However, the runtime required to achieve a certain error tolerance is much smaller for the RB-HMR approach than for the bilinear FEM. Finally, we note that even if we additionally compute $\Delta^{k}_{m}$ to estimate the model error $|e_{m}^{k}|_{H^{1}(\Omega)}$ of the RB-HMR approximation and thus certify the approximation, the runtimes of the RB-HMR approach are much smaller than the runtimes of the bilinear FEM approximation (see Fig.~\ref{fig4.15c}).


\section{Conclusions}\label{conclusion}

To generalize the RB-HMR approach, introduced in \cite{OS10}, to nonlinear PDEs we expanded the range of the nonlinear operator in an orthonormal (collateral) basis in the transverse direction. Both for the construction of the reduction space in the RB-HMR approach and the collateral basis space we used a highly nonlinear approximation. A manifold of parametrized lower dimensional operator evaluations has been generated by using the solutions of a parametrized dimensionally reduced problem and the corresponding parametrization.  Solution and operator snapshot sets have been simultaneously generated with an adaptive training set extension and the reduction and collateral basis space have been constructed by applying a POD.  
In this way, we included both in the construction of the manifold of operator evaluations and the selection of the collateral basis information on the evaluation of the nonlinear operator in the unknown full solution. The coefficients of the operator approximation have been computed with the newly introduced adaptive EPM, which is an adaptive integration algorithm based on the (G)EIM \cite{BMNP04,MaMu13}. While for the basis selection with the greedy algorithm and the POD several convergence results have already been proven, to the best of our knowledge no result that could have been employed has been proved until now for the approximation of the range of a nonlinear operator. This has been realized in this article by the introduction of the adaptive Empirical Projection Method and the proven rigorous a priori and a posteriori error bounds. We used these bounds to derive a rigorous a posteriori error estimator based on the Brezzi-Rappaz-Raviart theory, which is employed for the construction of the snapshot sets. Here, we extended the results on the effectivity of the error estimator in \cite{CaToUr09} from quadratically nonlinear PDEs to general nonlinear PDEs of type \eqref{nonlinear_pde}. We note that some of the proposed procedures for estimating the constants within this a posteriori error bound may be improved.

The numerical experiments for the nonlinear diffusion equation show that the reference solution and the set of solution snapshots are approximated by the reduction space with the same approximation quality. Here, a quadrature formula of higher accuracy had to be employed for a test case with a non-smooth solution in the parametrized lower dimensional problem. The evaluation of the nonlinear operator in the reference solution and the set of operator snapshots are approximated by the collateral basis space with the same approximation accuracy for a problem with an analytic solution and a comparable quality for a problem with a non-smooth solution. Hence, we conclude that by employing the suggested ansatz for the generation of the solution manifold and the manifold of operator manifolds we are able to transfer the relevant features of the reference solution and the operator evaluation to the respective manifolds to a great extent.  Furthermore, the numerical experiments demonstrate an exponential convergence behavior of the RB-HMR approach both for a problem with an analytical solution and a test case with a non-smooth solution also for small ellipticity constants. The applicability of the theoretical results including the bounds for the adaptive EPM is demonstrated, too. In particular we observed that in many cases the a posteriori error estimator provides a sharp upper bound of the error. Runtime experiments show a close to linear scaling of the RB-HMR approach in the number of degrees of freedom used for the computations in the dominant direction, while the respective finite element reference approximation scales quadratically. This demonstrates the computational efficiency of the proposed method also in the nonlinear setting.\\

\textbf{Acknowledgements:} We would like to thank the anonymous reviewers very much for their careful review and their helpful remarks which have significantly improved the presentation of the paper. \\

\providecommand{\bysame}{\leavevmode\hbox to3em{\hrulefill}\thinspace}
\providecommand{\MR}{\relax\ifhmode\unskip\space\fi MR }
\providecommand{\MRhref}[2]{%
  \href{http://www.ams.org/mathscinet-getitem?mr=#1}{#2}
}
\providecommand{\href}[2]{#2}


\begin{thebibliography}{10}

\bibitem{AmMoChKe06}
A.~Ammar, B.~Mokdad, F.~Chinesta, and R.~Keunings, \emph{A new family of
  solvers for some classes of multidimensional partial differential equations
  encountered in kinetic theory modeling of complex fluids}, Journal of
  Non-Newtonian Fluid Mechanics \textbf{139} (2006), no.~3, 153--176.

\bibitem{AmZaFa12}
D.~Amsallem, M.~J. Zahr, and C.~Farhat, \emph{Nonlinear model order reduction
  based on local reduced-order bases}, International Journal for Numerical
  Methods in Engineering \textbf{92} (2012), no.~10, 891--916.

\bibitem{AWWB08}
P.~Astrid, S.~Weiland, K.~Willcox, and T.~Backx, \emph{Missing point estimation
  in models described by proper orthogonal decomposition}, IEEE Trans. Automat.
  Control \textbf{53} (2008), no.~10, 2237--2251.

\bibitem{BMNP04}
M.~Barrault, Y.~Maday, N.C. Nguyen, and A.T. Patera, \emph{An 'empirical
  interpolation' method: application to efficient reduced-basis discretization
  of partial differential equations}, C. R. Math. Acad. Sci. {P}aris {S}eries
  {I} \textbf{339} (2004), 667--672.

\bibitem{Bear88}
J.~Bear, \emph{Dynamics of fluids in porous media}, Dover Publications, New
  York, 1988.

\bibitem{BOSS13}
H.~Berninger, M.~Ohlberger, O.~Sander, and K.~Smetana, \emph{Unsaturated
  subsurface flow with surface water and nonlinear in- and outflow conditions},
  Math. Models Methods Appl. Sci. \textbf{24} (2014), no.~5, 901--936.

\bibitem{BRR81}
F.~Brezzi, J.~Rappaz, and P.-A. Raviart, \emph{Finite-dimensional approximation
  of nonlinear problems. {I}. {B}ranches of nonsingular solutions}, Numer.
  Math. \textbf{36} (1980), no.~1, 1--25.

\bibitem{BuDaWi04}
T.~Bui-Thanh, M.~Damodaran, and K.~Willcox, \emph{Aerodynamic data
  reconstruction and inverse design using proper orthogonal decomposition},
  AIAA journal \textbf{42} (2004), no.~8, 1505--1516.

\bibitem{Caf98}
R.~E. Caflisch, \emph{Monte {C}arlo and quasi-{M}onte {C}arlo methods}, Acta
  numerica, vol.~7, Cambridge Univ. Press, Cambridge, 1998, pp.~1--49.

\bibitem{CalRap1997}
G.~Caloz and J.~Rappaz, \emph{Numerical analysis for nonlinear and bifurcation
  problems}, Handbook of numerical analysis, {V}ol. {V}, North-Holland,
  Amsterdam, 1997, pp.~487--637.

\bibitem{CanEhrLel11}
E.~Canc{\`e}s, V.~Ehrlacher, and T.~Leli{\`e}vre, \emph{{Convergence of a
  greedy algorithm for high-dimensional convex nonlinear problems}}, Math.
  Models Methods Appl. Sci. \textbf{21} (2011), no.~12, 2433--2467.

\bibitem{CaToUr09}
C.~Canuto, T.~Tonn, and K.~Urban, \emph{A posteriori error analysis of the
  reduced basis method for nonaffine parametrized nonlinear {PDE}s}, SIAM J.
  Numer. Anal. \textbf{47} (2009), no.~3, 2001--2022.

\bibitem{CarBouFar2011}
K.~Carlberg, C.~Bou-Mosleh, and C.~Farhat, \emph{{Efficient non linear model
  reduction via a least-squares Petrov-Galerkin projection and compressive
  tensor approximations}}, Int. J. Numer. Methods Eng. \textbf{86} (2011),
  no.~2, 155--181.

\bibitem{CaFaCoAm2013}
K.~Carlberg, C.~Farhat, J.~Cortial, and D.~Amsallem, \emph{The {GNAT} method
  for nonlinear model reduction: effective implementation and application to
  computational fluid dynamics and turbulent flows}, Journal of Computational
  Physics \textbf{242} (2013), 623--647.

\bibitem{ChaSor10}
S.~Chaturantabut and D.~C. Sorensen, \emph{Nonlinear model reduction via
  discrete empirical interpolation}, SIAM J. Sci. Comp. \textbf{32} (2010),
  no.~5, 2737--2764.

\bibitem{ChaSor12}
\bysame, \emph{A state space error estimate for {POD}-{DEIM} nonlinear model
  reduction}, SIAM J. Numer. Anal. \textbf{50} (2012), no.~1, 46--63.

\bibitem{ChiAmmCue2010}
F.~Chinesta, A.~Ammar, and E.~Cueto, \emph{Recent advances and new challenges
  in the use of the proper generalized decomposition for solving
  multidimensional models}, Arch. Comput. Methods Eng. \textbf{17} (2010),
  no.~4, 327--350.

\bibitem{Cia97}
P.~G. Ciarlet, \emph{Mathematical elasticity volume {II}: Theory of plates},
  Studies in Mathematics and Its Applications, vol.~27, Elsevier, 1997.

\bibitem{DenSch96}
J.~E. Dennis, Jr. and R.~B. Schnabel, \emph{Numerical methods for unconstrained
  optimization and nonlinear equations}, Classics in Applied Mathematics,
  vol.~16, Society for Industrial and Applied Mathematics (SIAM), Philadelphia,
  PA, 1996.

\bibitem{Dev98}
R.~DeVore, \emph{Nonlinear approximation}, Acta numerica, vol.~7, Cambridge
  Univ. Press, Cambridge, 1998, pp.~51--150.

\bibitem{DePeWo12}
R.~DeVore, G.~Petrova, and P.~Wojtaszczyk, \emph{Greedy algorithms for reduced
  bases in {B}anach spaces}, Constructive Approximation (2013), 1--12.

\bibitem{DrHaOh11}
M.~Drohmann, B.~Haasdonk, and M.~Ohlberger, \emph{Adaptive reduced basis
  methods for nonlinear convection--diffusion equations}, Finite Volumes for
  Complex Applications VI Problems \& Perspectives, Springer, 2011,
  pp.~369--377.

\bibitem{DroHaaOhl2012}
\bysame, \emph{Reduced basis approximation for nonlinear parametrized evolution
  equations based on empirical operator interpolation}, SIAM J. Sci. Comput.
  \textbf{34} (2012), no.~2, A937--A969.

\bibitem{EfGrPa10}
J.~L. Eftang, M.~A. Grepl, and A.~T. Patera, \emph{A posteriori error bounds
  for the empirical interpolation method}, C. R. Acad. Sci. Paris, Ser. I 348
  (2010), 575--579.

\bibitem{EftSta12}
J.~L. Eftang and B.~Stamm, \emph{Parameter multi-domain `{$hp$}' empirical
  interpolation}, Internat. J. Numer. Methods Engrg. \textbf{90} (2012), no.~4,
  412--428.

\bibitem{ErnGue04}
A.~Ern and J.-L. Guermond, \emph{Theory and practice of finite elements},
  Applied Mathematical Sciences, vol. 159, Springer-Verlag, New York, 2004.

\bibitem{ErnPerVen2008}
A.~Ern, S.~Perotto, and A.~Veneziani, \emph{Hierarchical model reduction for
  advection-diffusion-reaction problems}, Numerical Mathematics and Advanced
  Applications (Karl Kunisch, G{\"u}nther Of, and Olaf Steinbach, eds.),
  Springer Berlin Heidelberg, 2008, pp.~703--710.

\bibitem{Evans1998}
L.~C. Evans, \emph{Partial differential equations}, Graduate Studies in
  Mathematics, vol.~19, American Mathematical Society, Providence, RI, 1998.

\bibitem{EvaSwa00}
M.~Evans and T.~Swartz, \emph{Approximating integrals via {M}onte {C}arlo and
  deterministic methods}, Oxford Statistical Science Series, Oxford University
  Press, Oxford, 2000.

\bibitem{Fel68}
W.~Feller, \emph{An introduction to probability theory and its applications.
  {V}ol. {I}}, Third edition, John Wiley \& Sons, Inc., New York-London-Sydney,
  1968.

\bibitem{ForGerNobQua2001}
L.~Formaggia, J.~F. Gerbeau, F.~Nobile, and A.~Quarteroni, \emph{On the
  coupling of 3{D} and 1{D} {N}avier-{S}tokes equations for flow problems in
  compliant vessels}, Comput. Methods Appl. Mech. Engrg. \textbf{191} (2001),
  no.~6-7, 561--582.

\bibitem{ForNobQuaVen1999}
L.~Formaggia, F.~Nobile, A.~Quarteroni, and A.~Veneziani, \emph{{Multiscale
  modelling of the circulatory system: a preliminary analysis}}, Comput. Vis.
  Sci. \textbf{2} (1999), no.~2-3, 75--83.

\bibitem{ForQuaVen09}
L.~Formaggia, A.~Quarteroni, and A.~Veneziani (eds.), \emph{{Cardiovascular
  mathematics. Modeling and simulation of the circulatory system}}, Springer,
  2009.

\bibitem{GMNP07}
M.~A. Grepl, Y.~Maday, N.~C. Nguyen, and A.~T. Patera, \emph{Efficient
  reduced-basis treatment of nonaffine and nonlinear partial differential
  equations}, M2AN Math. Model. Numer. Anal. \textbf{41} (2007), no.~3,
  575--605.

\bibitem{HaaDihOhl2011}
B.~Haasdonk, M.~Dihlmann, and M.~Ohlberger, \emph{A training set and multiple
  bases generation approach for parameterized model reduction based on adaptive
  grids in parameter space}, Math. Comput. Model. Dyn. Syst. \textbf{17}
  (2011), no.~4, 423--442.

\bibitem{HaaOhl2008b}
B.~Haasdonk and M.~Ohlberger, \emph{Adaptive basis enrichment for the reduced
  basis method applied to finite volume schemes}, Finite volumes for complex
  applications {V}, ISTE, London, 2008, pp.~471--478.

\bibitem{KahVol07}
M.~Kahlbacher and S.~Volkwein, \emph{Galerkin proper orthogonal decomposition
  methods for parameter dependent elliptic systems}, Discuss. Math. Differ.
  Incl. Control Optim. \textbf{27} (2007), no.~1, 95--117.

\bibitem{KunVol02}
K.~Kunisch and S.~Volkwein, \emph{Galerkin proper orthogonal decomposition
  methods for a general equation in fluid dynamics}, SIAM J. Numer. Anal.
  \textbf{40} (2002), no.~2, 492--515.

\bibitem{LadUra68}
O.~A. Ladyzhenskaya and N.~N. Ural'tseva, \emph{{Linear and quasilinear
  elliptic equations}}, {Mathematics in Science and Engineering. 46. New
  York-London: Academic Press. XVIII}, 1968.

\bibitem{MaMu13}
Y.~Maday and O.~Mula, \emph{A generalized empirical interpolation method:
  Application of reduced basis techniques to data assimilation}, Analysis and
  Numerics of Partial Differential Equations, Springer INdAM Series, vol.~4,
  Springer Milan, 2013, pp.~221--235.

\bibitem{MMPY14}
Y.~Maday, O.~Mula, A.~T. Patera, and M.~Yano, \emph{The generalized empirical
  interpolation method: Stability theory on {H}ilbert spaces with an
  application to the {S}tokes equation}, Comput. Methods Appl. Mech. Engrg.
  \textbf{287} (2015), 310--334.

\bibitem{MaMuTu13}
Y.~Maday, O.~Mula, and G.~Turinici, \emph{A priori convergence of the
  generalized empirical interpolation method}, 10th international conference on
  Sampling Theory and Applications (SampTA 2013), 2013, pp.~168--171.

\bibitem{MNPP09}
Y.~Maday, N.~C.~Cuong Nguyen, A.~T. Patera, and G.~S.~H. Pau, \emph{A general
  multipurpose interpolation procedure: the magic points}, Commun. Pure Appl.
  Anal. \textbf{8} (2009), no.~1, 383--404.

\bibitem{Michel2003}
A.~Michel, \emph{A finite volume scheme for two-phase immiscible flow in porous
  media}, SIAM J. Numer. Anal. \textbf{41} (2003), no.~4, 1301--1317
  (electronic).

\bibitem{OhlSme2011}
M.~Ohlberger and K.~Smetana, \emph{A new hierarchical model reduction-reduced
  basis technique for advection-diffusion-reaction problems}, Proceedings of
  the V International Conference on Adaptive Modeling and Simulation (ADMOS
  2011) held in Paris, France, 6-8 June 2011 (Barcelona) (D.~Aubry,
  P.~D\'{i}ez, B.~Tie, and N.~Par\'{e}s, eds.), 2011, pp.~343--354.

\bibitem{OS10}
\bysame, \emph{A dimensional reduction approach based on the application of
  reduced basis methods in the framework of hierarchical model reduction}, SIAM
  J. Sci. Comput. \textbf{36} (2014), no.~2, A714--A736.

\bibitem{Ortner09}
C.~Ortner, \emph{A posteriori existence in numerical computations}, SIAM J.
  Numer. Anal. \textbf{47} (2009), no.~4, 2550--2577.

\bibitem{PeBuWiBu13}
B.~Peherstorfer, D.~Butnaru, K.~Willcox, and H.-J. Bungartz, \emph{Localized
  discrete empirical interpolation method}, SIAM J. Sci. Comput. \textbf{36}
  (2014), no.~1, A168--A192.

\bibitem{PerErnVen10}
S.~Perotto, A.~Ern, and A.~Veneziani, \emph{Hierarchical local model reduction
  for elliptic problems: a domain decomposition approach}, Multiscale Model.
  Simul. \textbf{8} (2010), no.~4, 1102--1127.

\bibitem{PerVen14}
S.~Perotto and A.~Veneziani, \emph{Coupled model and grid adaptivity in
  hierarchical reduction of elliptic problems}, J. Sci. Comput. \textbf{60}
  (2014), no.~3, 505--536.

\bibitem{Pinkus85}
A.~Pinkus, \emph{{$n$}-widths in approximation theory}, vol.~7,
  Springer-Verlag, Berlin, 1985.

\bibitem{Plu01}
Michael Plum, \emph{Computer-assisted enclosure methods for elliptic
  differential equations}, Linear Algebra Appl. \textbf{324} (2001), no.~1-3,
  147--187.

\bibitem{PouRap94}
J.~Pousin and J.~Rappaz, \emph{Consistency, stability, a priori and a
  posteriori errors for {P}etrov-{G}alerkin methods applied to nonlinear
  problems}, Numer. Math. \textbf{69} (1994), no.~2, 213--231.

\bibitem{Sme13}
K.~Smetana, \emph{A dimensional reduction approach based on the application of
  reduced basis methods in the context of hierarchical model reduction}, Ph.D.
  thesis, University of M\"{u}nster, 2013.

\bibitem{Stoker1992}
J.~J. Stoker, \emph{Water waves}, Wiley Classics Library, John Wiley \& Sons
  Inc., New York, 1992.

\bibitem{Tal94}
G.~Talenti, \emph{Inequalities in rearrangement-invariant function spaces},
  Nonlinear Analysis, Function Spaces and Applications (Prague), vol.~5,
  Prometheus Publishing House, 1994, pp.~177--230.

\bibitem{Tem01}
R.~Temam, \emph{Navier-{S}tokes equations}, AMS Chelsea Publishing, Providence,
  RI, 2001.

\bibitem{Tonn2011}
T.~Tonn, \emph{Reduced-basis method {(RBM)} for non-affine elliptic
  parametrized {PDEs}}, Ph.D. thesis, Universit{\"a}t Ulm, 2011.

\bibitem{UrbWie2012}
K.~Urban and B.~Wieland, \emph{Affine decompositions of parametric stochastic
  processes for application within reduced basis methods}, Proceedings of
  MATHMOD 2012, 7th Vienna International Conference on Mathematical Modelling,
  held in Vienna, February 14--17, 2012 (I.~Troch and F.~Breitenecker, eds.),
  2012, pp.~716--721.

\bibitem{VerPat05}
K.~Veroy and A.~T. Patera, \emph{Certified real-time solution of the
  parametrized steady incompressible {N}avier-{S}tokes equations: rigorous
  reduced-basis a posteriori error bounds}, Internat. J. Numer. Methods Fluids
  \textbf{47} (2005), no.~8-9, 773--788.

\bibitem{VogBab81a}
M.~Vogelius and I.~Babu{\v{s}}ka, \emph{On a dimensional reduction method. {I}.
  {T}he optimal selection of basis functions}, Math. Comp. \textbf{37} (1981),
  no.~155, 31--46.

\bibitem{VogBab81b}
\bysame, \emph{On a dimensional reduction method. {II}. {S}ome
  approximation-theoretic results}, Math. Comp. \textbf{37} (1981), no.~155,
  47--68.

\bibitem{VogBab81c}
\bysame, \emph{On a dimensional reduction method. {III}. {A} posteriori error
  estimation and an adaptive approach}, Math. Comp. \textbf{37} (1981),
  no.~156, 361--384.

\bibitem{Wie15}
B.~Wieland, \emph{Implicit partitioning methods for unknown parameter sets},
  Adv. Comput. Math. \textbf{41} (2015), no.~5, 1159--1186.

\bibitem{WirSorHaa12}
D.~Wirtz, D.~C. Sorensen, and B.~Haasdonk, \emph{A posteriori error estimation
  for {DEIM} reduced nonlinear dynamical systems}, SIAM J. Sci. Comput.
  \textbf{36} (2014), no.~2, A311--A338.

\end{thebibliography}
\end{document}